\newtheorem{theorem}{Theorem}[section]
\newtheorem{lemma}{Lemma}[section] 
\newtheorem{corollary}{Corollary}[section]
\newtheorem{remark}{Remark}[section]
\newtheorem{definition}{Definition}[section]
\newcommand{\orig}[1]{\mathsf{#1}}
\def\noteps{\mathrel{\!\not\mathrel{\,\overline{\varepsilon}\!}\,}}
\newcommand{\nat}{\mbox{\cal N}}
\newcommand{\mtt}{\mbox{{\bf mTT}}}
\newcommand{\mtts}{\mbox{{\bf mTT}$^s$}}
\newcommand{\emtt}{\mbox{{\bf emTT}}}
\newcommand{\mf}{\mbox{{\bf MF}}}
\newcommand{\tar}{\mbox{$\widehat{ID_1}$}}
\newcommand{\Set}[1]{\mathbf{Set}(#1)}
\newcommand{\cx}{\left[\underline{x}\right]}
\newcommand{\cxy}{\left[\underline{x},y\right]}
\newcommand{\cxyz}{\left[\underline{x},y,z\right]}
\newcommand{\cxyzu}{\left[\underline{x},y,z,u\right]}
\newcommand{\pair}[2]{\langle #1,#2 \rangle}
\newcommand{\rec}{\mathbf{rec}}
\newcommand{\lrec}{\mathbf{listrec}}
\newcommand{\ite}[3]{\{\mathbf{ite}\}(#1,#2,#3)}
\begin{document}
\title{An extensional Kleene realizability semantics for the Minimalist Foundation} 
\author{Maria Emilia Maietti and Samuele Maschio}


\maketitle
\date{ }
\begin{abstract}
We build a Kleene realizability semantics for the two-level Minimalist Foundation {\bf MF}, ideated by Maietti and Sambin in 2005 and completed by Maietti in 2009.   Thanks to this semantics we prove that both levels of {\bf MF}
are consistent  with the (Extended) formal Church Thesis {\bf CT}.

Since {\bf MF} consists of two levels, an intensional one, called \mtt,
 and an extensional one, called \emtt, linked by an interpretation, it is enough to build a realizability  semantics for the intensional level
\mtt\ to get one for the extensional one \emtt, too.
Moreover, both levels consists of type theories based on versions of Martin-L{\"o}f's type theory.

Our realizability semantics for \mtt\ is a modification of the realizability semantics by Beeson in 1985  for extensional
first order Martin-L{\"o}f's type theory with one universe. So
it is formalized in Feferman's classical
arithmetic
theory of inductive definitions, called \tar.
It is called {\it extensional} Kleene realizability semantics   since it validates extensional
equality of type-theoretic functions {\bf extFun}, as in Beeson's one.

The main modification we perform on Beeson's semantics is to interpret
propositions, which are defined primitively in {\bf MF}, in a proof-irrelevant way.
As a consequence, we gain the validity of {\bf CT}. Recalling that
{\bf extFun}+ {\bf CT}+ {\bf AC} are inconsistent over arithmetics with finite types,
we conclude that our semantics does not validate
the Axiom of Choice {\bf AC} on generic types. 
On the contrary,  Beeson's semantics
does validate {\bf AC},
being this a theorem of Martin-L{\"o}f's theory, but it does not validate
{\bf CT}.  The semantics we present here appears to be the best Kleene realizability semantics for the extensional
level \emtt. Indeed Beeson's semantics is not an option for
\emtt\  since {\bf AC} on generic sets added to it entails the excluded middle.
\end{abstract}
\section{Introduction}
A foundation for mathematics should be called constructive only if the mathematics arising from it could be considered genuinely computable.
One way to show this is to produce a realizability model of the foundation where
arbitrary sets are interpreted as data types and 
functions between them are interpreted as programs.
 A key example is Kleene's realizability model for first-order
Intuitionist Arithmetics validating the formal Church Thesis.

Here we will show how to build a realizability model for the  \emph{Minimalist Foundation}, for short {\bf MF}, ideated by Maietti and Sambin in \cite{mtt} and then completed by Maietti in  \cite{m09}, where it is explicit how to extract programs
from its proofs. In particular we show that {\bf MF} is consistent with 
the (Extended) Church Thesis, for short {\bf CT}. This result is part of a project to know to what extent {\bf MF} enjoys the same properties as Heyting arithmetics.

The Minimalist Foundation is intended to constitute a common core among the most relevant constructive and classical foundations. One of its novelties is that it consists of two levels: an intensional level, called \mtt, which should make evident the constructive contents of mathematical proofs in terms of programs,  and an extensional level, called \emtt, formulated in a language close as much as possible to that of ordinary mathematics.  Both  intensional and  extensional levels of  {\bf MF} consist of type systems based on versions of Martin-Lof's type theory with the addition of a primitive notion of propositions: the intensional one is based on \cite{PMTT}
and the extensional one on \cite{ML84}. Actually \mtt\ 
can be considered a {\it predicative} version of Coquand's Calculus of Constructions \cite{TC90}.

To build a realizability model for the two-level Minimalist Foundation, it is enough to build it  for its intensional level \mtt.
Indeed an interpretation for the extensional level \emtt\ 
can be then obtained from an interpretation of \mtt\ by composing this with the interpretation of \emtt\ in a suitable setoid model of \mtt\ as in \cite{m09} and analyzed in \cite{qu12}.
Moreover, since the interpretation of  {\bf CT} from the extensional level to the intensional one is equivalent to {\bf CT} itself according to \cite{m09},
a model showing consistency of \mtt\ with {\bf CT} can be turned into a model
showing consistency of \emtt\ with {\bf CT}.

Here, we build a  realizability model for \mtt+ {\bf CT}\  
 by
suitably modifying
Beeson's realizability semantics \cite{beeson} for the extensional version of first order Martin-L{\"o}f's type theory with one universe~\cite{ML84}.
So, as Beeson's semantics  our model is based on Kleene realizability semantics
of intuitionistic arithmetics and it is formalized in Feferman's classical
arithmetic
theory of inductive definitions, called \tar\ (\cite{Fef}). The theory \tar\ is formulated in the language of second-order arithmetics and it consists of PA (Peano Arithmetic) plus the existence of some (not necessary the least) fix point for positive parameter-free arithmetical operators.

We call our Kleene realizability semantics  {\it extensional} since it validates extensional
equality of type-theoretic functions {\bf extFun}, as Beeson's one.

The main modification we perform to Beeson's semantics is to interpret
propositions, which are defined primitively in {\bf MF}, in a proof-irrelevant way.
More in detail we interpret \mtt-sets  as Beeson interpreted Martin-L{\"o}f's sets, propositions are interpreted as
 trivial quotients of Kleene realizability interpretation of intuitionistic connectives, and the universe of \mtt-small propositions is interpreted as a suitable quotient of some fix point including all the codes
of small propositions by using the technique  Beeson adopted to interpret  Martin-L{\"o}f's universe.

As a consequence in our model we gain the validity of {\bf CT} but
we loose the validity of the full Axiom of Choice {\bf AC}.  Instead
in Beeson's semantics,  {\bf AC} is valid,  being this a theorem of  Martin-L{\"o}f's theory, but {\bf CT} is not.
All these results follow from  the well known fact
that   {\bf extFun}+ {\bf CT}+ {\bf AC} over arithmetics with finite types are inconsistent. Therefore in the presence of  {\bf extFun} as in our \emtt,
either one validates {\bf CT} as we do here, or {\bf AC}
as in Beeson's semantics. Recalling that the addition of  {\bf AC} on generic sets in \emtt\ entails
the excluded middle, Beeson's semantics is not an option for \emtt.
Therefore the semantics we present here appears to be the best
Kleene realizability semantics for the extensional level \emtt.

Actually a consistency proof for \emtt\ with {\bf CT} could also be  obtained by interpreting this theory in the internal
theory of Hyland's effective topos~\cite{eff}.
But here  we have obtained a proof in a {\it predicative theory}, whilst classical,
 as \tar. As a future work we intend to generalize the notion of effective
topos to that of a predicative effective topos in order to extract the categorical structure behind  our realizability interpretation.

\section{The Minimalist Foundation} 

In \cite{m09} a two-level formal system, called {\bf  Minimalist Foundation},
for short {\bf MF},
is completed following the design advocated in \cite{mtt}.
The  two levels of {\bf MF} are both given by a type theory \`a la  Martin-L{\"o}f: the intensional level, called \mtt, is an intensional
type theory including aspects of  Martin-L{\"o}f's one
in \cite{PMTT} (and extending the set-theoretic version in \cite{mtt} with collections), and its extensional level, called \emtt,  is an extensional type theory including
aspects of  extensional Martin-L{\"o}f's one in \cite{ML84}.
Then a quotient model of setoids \`a la Bishop~\cite{Bishop,disttheshof,ven,notepal} over the intensional level is  used in \cite{m09} to interpret
the extensional level in the intensional one. A categorical study of this quotient model has been carried on 
 in~\cite{qu12,elqu,uxc}
and related to the construction of Hyland's effective topos~\cite{eff,tripos}.

{\bf MF} was ideated in \cite{mtt} to be constructive and  minimalist, that is compatible with (or interpretable in)  most relevant
constructive and classical foundations for mathematics in the literature.
According to these desiderata, {\bf MF} has the following peculiar features (for a more extensive description see also  \cite{whyp}):
\begin{itemize}

\item {\bf MF has two types of entities: sets and collections.}
This is a consequence of the fact that
a minimalist foundation compatible with most of constructive theories in the literature, among which, for example,
 Martin-L{\"o}f's one in \cite{PMTT}, should be certainly predicative and
based on intuitionistic predicate logic, including at least
the axioms of Heyting arithmetic. For instance it could be a many-sorted logic, such as Heyting arithmetic of finite types~\cite{DT88},  where  sorts, that we call {\it types}, include 
the basic sets we need to represent our mathematical entities.
But in order to represent topology in an intuitionistic and predicative way, then {\bf MF}  needs to be equipped with two kinds of entities: sets and collections.
Indeed, the {\it power of a non-empty set}, namely the discrete topology over a non-empty set, fails to be a set in a predicative foundation, and it is only a {\it collection}.

\item
{\bf  MF has two types of propositions.}  This is a consequence of the previous
characteristic. Indeed the presence of sets and collections, where the latter include the representation of power-collections of subsets,
yields to distinguish
two types of propositions to remain predicative: those closed under quantifications on sets, called
{\it small propositions}  in \cite{m09}, from those
closed under any kind of quantification, called  {\it propositions} in \cite{m09}.
This distinction is crucial in the definition of ``subset of a set'' we adopt
in \mf: a subset of
a set $A$ is indeed an equivalence class of small propositional functions from $A$.


\item
{\bf MF has two types of functions.}
As in  Coquand's Calculus of Constructions~\cite{TC90}, or Feferman's predicative theories \cite{Fef}, in {\bf MF} we distinguish the notion of functional relation
from that of type-theoretic function. In particular {\it in {\bf MF} only
type-theoretic functions between two sets form a set, while  functional relations between two sets form generally a collection}.

This restriction is crucial to make {\bf MF} compatible with classical predicative
theories as Feferman's predicative theories \cite{Fef}.
Indeed it is well-known  that the  {\it addition of the principle of excluded middle} can
turn a predicative theory
where functional relations between sets form a set, as Aczel's CZF or Martin-L{\"o}f's type theory,
 into an impredicative one where
 {\it power-collections become sets}. 
\end{itemize}

\subsection{The intensional level  of the Minimalist Foundation}
Here we describe the intensional level of the Minimalist Foundation
in \cite{m09}, which is represented by a dependent type theory called
\mtt. This type theory is  written  in the style of Martin-L{\"o}f's type theory \cite{PMTT} 
 by means of the following four kinds of judgements:
$$
A \ type\ [\Gamma] \hspace{.5cm} A=B\ type\ [\Gamma] 
\hspace{.5cm} a \in A\ 
 [\Gamma] \hspace{.5cm} a=b \in A\ [\Gamma] 
 $$
that is the type judgement (expressing that something is a specific type), 
the type equality judgement (expressing when
two types are equal), the
term judgement (expressing that something is a term of a certain type)
 and the term equality judgement
(expressing the {\it definitional equality} between terms of the same type), respectively, all under a context  $\Gamma$.

The word {\it type} is used as a meta-variable to indicate
four kinds of entities: collections, sets, propositions and small propositions, namely
 $$
 type \in \{ col, set,prop,prop_s\, \}
 $$
Therefore, in \mtt\ types are actually formed by using the following
judgements:
  $$
  A \ set\ [\Gamma] \qquad D\ col\ [\Gamma] 
\qquad \phi\ prop\
 [\Gamma]\qquad \psi\ prop_s\
 [\Gamma]
 $$
saying that $A$ is a set, that $D$ is a collection, that $\phi$ is a proposition
and that $\psi$ is a small proposition.

Here, contrary to \cite{m09} where capital latin letters are used as meta-variables for all types,  we use greek letters $\psi, \phi$ as meta-variables for propositions, we mostly use capital latin letters $A,B$ as meta-variables for sets and capital latin letters $C,D$ as meta-variables for collections.

As in the intensional version of Martin-L{\"o}f's type theory, in \mtt\
 there are two kinds of equality concerning terms:
one is the definitional equality of terms of the same type given
 by the judgement
$$
a=b\in A\ [\Gamma]
$$
which is decidable,
and the other is the  propositional equality written 
$$
\mathsf{\mathsf{Id}}(A,a,b)\ prop\ [\Gamma]
$$
which is not necessarily decidable.

 We now proceed by briefly describing the various kinds of types in \mtt, starting
from small propositions and propositions and then passing to sets and finally collections.

{\it Small propositions} in \mtt\  include all the logical constructors of intuitionistic predicate logic with equality and quantifications
restricted to sets: 
\\

\begin{tabular}{lc}
$\phi\ prop_s\  \equiv$& $ \perp\enspace\mid$  $ \enspace\phi \wedge \psi \enspace\mid$ 
 $\phi\vee \psi \enspace \mid$ $\ \phi\rightarrow \psi \enspace\mid $ $(\forall x\in A)\ \phi(x) \enspace\mid$  $(\exists x\in A)\ \phi(x) \enspace\mid$  $\ \mathsf{Id}(A,a,b)$
\end{tabular}
\\

\noindent
{\it provided that $A$ is a set}.

Then,   {\it propositions} in \mtt\  include all the logical constructors of intuitionistic predicate logic with equality and quantifications
on all kinds of types, i.e. sets and collections. Of course, small propositions are also propositions.
\\

\begin{tabular}{lc}
$\phi\ prop\  \equiv$& $ \phi\ prop_s\, \mid$  $ \phi\wedge \psi\ \mid$ 
 $\phi \vee \psi\ \mid$ $\ \phi\rightarrow \psi\ \mid $ $(\forall x\in D)\ \phi(x)\ \mid$  $(\exists x\in D)\ \phi(x)\ \mid$  $\ \mathsf{Id}(D,d,b)$
\end{tabular}
\\

In order to close sets under comprehension, for example to include
the set of positive natural numbers $\{ x\in \nat\ \mid\ x \geq 1\}$, and to define operations on such sets, 
 we need to think of propositions
as types of their proofs:
small propositions are seen as sets of their proofs
while generic propositions are seen  as collections of their proofs. That is, we  add to \mtt\  the following rules
 $$
 \begin{array}{l}
 \mbox{\bf prop$_s$-into-set) }\ \ \displaystyle{ \frac
        {\displaystyle\  \phi
         \hspace{.1cm} prop_s\ }
 { \displaystyle\ \phi
         \hspace{.1cm} set\ }}
 \end{array}
 \qquad\qquad
 \begin{array}{l}
     \mbox{\bf prop-into-col) }\ \ 
 \displaystyle{ \frac
        {\displaystyle\  \phi\ prop\  }
       { \displaystyle\  \phi\ col  \ }}
 \end{array}
 $$
Before explaining the difference between the notion of set and collection we
describe their constructors in \mtt.

{\it Sets} in \mtt\  are characterized as  inductively generated types and they include the following:\\

\begin{tabular}{lc}
$ A\ set\  \equiv$& $ \phi\ prop_s \mid$ $  N_0 \mid$  $ N_1 \mid$ $ N \mid$ $ List(A)  \mid $
 $(\Sigma x\in A)\, B(x) \mid$ $A+B \mid $ $ (\Pi x\in A)\, B(x)\, $
\end{tabular}
\\

\noindent
where the notation $N_0$ stands for the empty set,
$N_1$ for the singleton set, $N$ for the set of natural numbers, $List(A)$ for the set of Lists on the set $A$,
$(\Sigma x\in A) B(x)$ for the indexed sum of the family of sets $B(x)\ set \ [x\in A]$ indexed on the set $A$, $A+B$ for
the disjoint sum of the set $A$ with the set $B$, $(\Pi x\in A) B(x)$ for the product type of the family of sets $B(x)\ set \ [x\in A]$ indexed on the set $A$. 

It is worth noting that the set $N$ of the natural numbers is not present in a primitive way in \mtt\ since its rules can be derived by putting $N\,\equiv\,List(N_1)$. Here we add it to the syntax of \mtt\, because it plays a prominent role in realizability and we want to interpret it directly in \tar\ to avoid complications due to list encodings.


Finally, {\it collections} in \mtt\ include the following types:\\

\begin{tabular}{lc}
$ D\ col\  \, \equiv\,$& $ A \ set\enspace\mid$ $ \enspace \phi\ prop\ \enspace\mid$  $ \enspace \mathsf{prop_s}\enspace\mid$ $\ A\rightarrow \mathsf{prop_s}\enspace \mid $ $\ (\Sigma x\in D)\, E(x)$
\end{tabular}

\noindent
and all sets are collections thanks to the following rule:
$$
\begin{array}{l}
     \mbox{\bf set-into-col) }\ \ 
 \displaystyle{ \frac
        {\displaystyle\  A\ set\  }
       { \displaystyle\  A\ col  \ }}
 \end{array}
$$

\noindent
where $\mathsf{prop_s}$ stands for the 
collection of (codes for) small propositions and $A\rightarrow  \mathsf{prop_s}$  for the 
collection of propositional functions of the set $A$, while $ (\Sigma x\in D)\, E(x) $ stands
for the indexed sum of the family of collections $E(x)\ col\ [x\in D]$ indexed on the collection $D$.

Note that the collection of small propositions $  \mathsf{prop_s}$
is defined here with codes \`a la Tarski as in \cite{PMTT},
contrary to the version in \cite{m09}, to make the interpretation easier to understand. Its rules are the following.

\noindent
Elements of the collection of small propositions are generated as follows:
\\

{\small
$\begin{array}{ll} 
\mbox{Pr$_1$) }\ \widehat{\bot} \in \mathsf{prop_s} &
\mbox{Pr$_2$) }\displaystyle{ \frac
         { \displaystyle  p\in  \mathsf{prop_s} \hspace{.3cm} q \in
         \ \mathsf{prop_s}}
         {\displaystyle p \widehat{\vee} q\in \mathsf{prop_s} }}\\[15pt]
\mbox{Pr$_3$) }\displaystyle{ \frac
       {\displaystyle p\in
        \ \mathsf{prop_s} \qquad q\in
        \ \mathsf{prop_s} }
{p\widehat{\rightarrow} q \in \mathsf{prop_s} }}&
\mbox{Pr$_4$) }\displaystyle{ \frac
         { \displaystyle  p   \in \mathsf{prop_s}\qquad  q\in
           \mathsf{prop_s} }
         {\displaystyle  p\widehat{\wedge} q\in  \mathsf{prop_s} }}
\\[15pt]
\mbox{Pr$_5$) }\displaystyle{ \frac
{\displaystyle  A \hspace{.1cm} set \hspace{.3cm}  a\in A \hspace{.3cm} b\in A}
         {\displaystyle \widehat{\orig{Id}}(A, a, b) \in \mathsf{prop_s} }}
&
\mbox{Pr$_6$) }\displaystyle{ \frac
         { \displaystyle   p(x)
         \ \mathsf{prop_s}\ \ [x\in B]\qquad B\  set}
         {\displaystyle  \widehat{(\exists x\in B)} p(x)\in
\mathsf{prop_s}}} \\[15pt]
&\mbox{Pr$_7$) }\displaystyle{ \frac
{\displaystyle p(x) \in \mathsf{prop_s}\ [x\in B]\qquad B\ set  }
{\displaystyle \widehat{(\forall x\in B)} p(x) \in \mathsf{prop_s} }}
\end{array}
$}
\\

Elements of the collection of small propositions can be decoded 
as  small propositions via an operator as follows
\\

{\small
\noindent
$
\mbox{$\tau$-Pr) }\displaystyle{ \frac
       {\displaystyle p\in
        \ \mathsf{prop_s} }
{ \tau(p) \ prop_s }}
$
\\
}

\noindent
and this operator satisfies the following definitional equalities:

{\small
$\begin{array}{ll} 
\mbox{eq-Pr$_1$) }\ \tau(\widehat{\bot})= \bot\, prop_s &
\mbox{eq-Pr$_2$) }\displaystyle{ \frac
         { \displaystyle  p\in  \mathsf{prop_s} \hspace{.3cm} q \in
         \ \mathsf{prop_s}}
         {\displaystyle \tau( p \widehat{\vee} q)= \tau(p) \vee \tau(q)\,prop_s}}\\[15pt]
 \mbox{eq-Pr$_3$) }\displaystyle{ \frac {\displaystyle p\in
     \ \mathsf{prop_s} \qquad q\in
         \ \mathsf{prop_s} }
 {\tau(p\widehat{\rightarrow} q)= \tau(p)\rightarrow \tau(q)\,prop_s }}
&
\mbox{eq-Pr$_4$) }\displaystyle{ \frac
         { \displaystyle  p   \in \mathsf{prop_s}\qquad  q\in
           \mathsf{prop_s} }
         {\displaystyle  \tau(p\widehat{\wedge} q)=\tau(p) \wedge \tau(q)\, prop_s}}
\\[15pt]
\mbox{eq-Pr$_5$) }\displaystyle{ \frac
{\displaystyle  A \hspace{.1cm} set \hspace{.3cm}  a\in A \hspace{.3cm} b\in A}
         {\displaystyle \tau(\, \widehat{\orig{Id}}(A, a, b)\,)=
\orig{Id}(A,a,b)\,prop_s    }} &
\mbox{eq-Pr$_6$) }\displaystyle{ \frac
         { \displaystyle   p(x)
         \ \mathsf{prop_s}\ \ [x\in B]\qquad B\  set}
         {\displaystyle \tau( \widehat{(\exists x\in B)} p(x))=
(\exists x\in B) \tau(p(x))\, prop_s }} \\[15pt]
&\mbox{eq-Pr$_7$) }\displaystyle{ \frac
{\displaystyle p(x) \in \mathsf{prop_s}\ [x\in B]\qquad B\ set  }
{\displaystyle \tau(\widehat{(\forall x\in B)} p(x))= (\forall x\in B) \tau(p(x)) \, prop_s}}
\end{array}
$}
\\

In the realizability
interpretation of \mtt\ 
we need to define a subset of natural numbers including codes of \mtt -sets in order
to define the subset of codes of small propositions closed under quantification
on sets.
The existence of such  a subset of set codes says that the realizability interpretation is actually interpreting an extension of 
 \mtt\ with a collection of sets.
In order to simplify the definition of the realizability interpretation,
we interpret  an extension of \mtt, which  
 we call \mtts,  with the addition of the
 collection $  \mathsf{Set}$ of set codes %
whose related rules are the following. We don't give any elimination and conversion rule as those of universes \`a la Tarski in \cite{PMTT} since it would not be validated in the model (because we do not have least fix-points in \tar). 

\noindent
{\small
$ 
\begin{array}{l}
 \mbox{\bf Collection of sets}\\[5pt]
\mbox{F-Se) } \ \displaystyle{\mathsf{Set}\  col 
}\end{array}
$}
\\
\\

\noindent
Elements of the collection of sets are generated as follows:
\\

{\small
$\begin{array}{ll} 
\mbox{Se$_e$) }\ \widehat{N_0} \in \mathsf{Set} &
\mbox{Se$_s$) }\ \widehat{N_1} \in \mathsf{Set}\\[15pt]
\mbox{Se$_l$) }\displaystyle{ \frac
         { \displaystyle  a\in  \mathsf{Set} \hspace{.3cm} }
         {\displaystyle\widehat{List} (a)\in \mathsf{Set} }}
&\mbox{Se$_u$) }\displaystyle{ \frac
         { \displaystyle  a\in  \mathsf{Set} \hspace{.3cm} b\in  \mathsf{Set}  }
         {\displaystyle a \widehat{+} b\in \mathsf{Set} }}
\\[15pt]
\mbox{Se$_\Sigma$) }\displaystyle{ \frac
         { \displaystyle   a(x)
         \ \mathsf{Set}\ \ [x\in B]\qquad B\  set}
         {\displaystyle  \widehat{(\Sigma x\in B)} a(x)\in
\mathsf{Set}}} 
&
\mbox{Se$_\Pi$) }\displaystyle{ \frac
         { \displaystyle   a(x)
         \ \mathsf{Set}\ \ [x\in B]\qquad B\  set}
         {\displaystyle  \widehat{(\Pi x\in B)} a(x)\in
\mathsf{Set}}}\\[15pt]
 \mbox{sp-i-p) }\displaystyle{ \frac {\displaystyle p\in
     \ \mathsf{prop_s}  } {  p\in \mathsf{Set}  }}& 
\end{array}
$}
\\

%
%
%
%

\mtt\ can be viewed as a {\it predicative version} of the Calculus
of Constructions~\cite{TC90}, for short CoC. The main difference with respect
to CoC is that 
\mtt\ distinguishes between sets and collections in a way
 similar to the distinction between sets and classes in axiomatic set theory.
However, all types  of \mtt, i.e. small propositions, propositions, sets and collections, are predicative entities
in the sense that their elements can be generated in an inductive way
by a finite number of rules. According to the notion of set
in Bishop~\cite{Bishop}
and  Martin-L{\"o}f~\cite{pML70}, all \mtt-types are actually sets,
and in fact
\mtt-types can be interpreted as sets in the intensional
version of  Martin-L{\"o}f's type theory in \cite{PMTT}.
The \mtt-distinction  between sets and collections, and the corresponding distinction between small propositions and propositions,
is motivated by the need of distinguishing between predicative entities whose notion of element is a closed concept, and these are called sets,
and those entities whose notion of element is an open concept, and these are called collections.
The motivating idea  is that a set is inductively generated by a finite number of rules whose associated inductive principle does not vary when the theory \mtt\
is extended with new entities (sets, collections or propositions). On the contrary a collection is inductively generated by a finite number of rules
which may vary when the theory is extended with new entities.
 Typical examples of collections
are universes (of sets or propositions): if we extend the theory \mtt\ with a new small proposition, then we need to add a new rule inserting this new
small proposition in the collection of small propositions. 


We recall from \cite{mtt} that the distinction between propositions and sets is crucial to avoid the validity of choice principles.

Finally, it is worth noting that in \mtt\ we restrict substitution term equality rules
to explicit substitution term equality rules 
of the form
\\

\noindent
$$
\begin{array}{l}
      \mbox{sub)} \ \
\displaystyle{ \frac
         { \displaystyle 
\begin{array}{l}
 c(x_1,\dots, x_n)\in C(x_1,\dots,x_n)\ \
 [\, x_1\in A_1,\,  \dots,\,  x_n\in A_n(x_1,\dots,x_{n-1})\, ]   \\[5pt]
a_1=b_1\in A_1\ \dots \ a_n=b_n\in A_n(a_1,\dots,a_{n-1})
\end{array}}
         {\displaystyle c(a_1,\dots,a_n)=c(b_1,\dots, b_n)\in
 C(a_1,\dots,a_{n})  }}
\end{array}     
$$

\noindent
in place of usual term equality rules preserving term constructions typical
of   Martin-L{\"o}f's type theory in \cite{PMTT}. 
This restriction, and in particular the absence of 
the so called $\xi$-rule of lambda-terms
$$
\mbox{  $\xi$} \
\displaystyle{\frac{ \displaystyle c=c'\in  C\ [x\in B]  }
{ \displaystyle \lambda x^{B}.c=\lambda x^{B}.c' \in (\Pi x\in B) C}}
$$
 seems to be crucial to prove consistency of \mtt\ with {\bf AC}+{\bf CT}, as advocated in \cite{mtt}, by means
of  a realizability semantics \`a la Kleene, but this is still an open problem (the realizability semantics given here does not help to solve this since it can not validate {\bf AC} on all types). 
It is worth to recall from \cite{m09} that our restriction of term equality  does not affect the possibility of adopting \mtt\ as
the intensional
level of a  two-level constructive foundation
as intended in  \cite{mtt}. Indeed the 
   term equality rules of \mtt\  suffice to interpret
an extensional level including extensional equality of functions, as that represented
by \emtt, by means of the quotient model  described in \cite{m09} and studied abstractly
in ~\cite{qu12,elqu,uxc}.

\subsection{The extensional level of the Minimalist Foundation}
Here we briefly describe the extensional level \emtt\ of the Minimalist Foundation. This is an extensional dependent type theory extending extensional
Martin-L{\"o}f's type theory in~\cite{ML84} with primitive (proof-irrelevant)
propositions, power-collections and quotients.

The rules of \emtt\ are formulated by using the same  kinds of judgements used for \mtt.
The main peculiar characteristics of \emtt\ in comparison to \mtt\ are the following.

\begin{enumerate}
\item
A primary difference between \emtt\  and \mtt\ is the usual difference between
the so called intensional version of  Martin-L{\"o}f's type theory \cite{PMTT} 
and its extensional one in \cite{ML84} and this is the fact that the definitional equality of terms 
$$
a=b\in A\ [\Gamma]
$$
is no longer {\it decidable} in \emtt\
as it is in the intensional \mtt.
This is in turn due to the fact  that the propositional equality of \emtt\,
as that of \cite{ML84},  called $\mbox{$\orig{Eq}$}(A, a, b)$, is extensional
in the sense that the provability of  $ \mbox{$\orig{Eq}$}(A, a, b)\ [\Gamma]$ in \emtt\ is equivalent to the derivation of the judgement $
a=b\in A\ [\Gamma]
$. Instead, in \mtt\ only the derivation of the definitional equality judgement
$
a=b\in A\ [\Gamma]
$
implies internally the provability of  the intensional propositional equality $ \mbox{$\orig{Id}$}(A, a, b)\ [\Gamma]$
under a generic context.

\item
Another peculiar feature of  \emtt\  employs
the distinction between propositions and sets: this is the addition of
  proof-irrelevance for propositions  captured by the following rules
$$
\mbox{{\bf prop-mono}) }  \,\displaystyle{ \frac{ \displaystyle \phi\ prop\ [\Gamma]\qquad p\in \phi\  [\Gamma] \quad q\in \phi\  [\Gamma]}{\displaystyle p=q\in \phi\ \ [\Gamma]}}  
\qquad
\begin{array}{l}
\mbox{{\bf prop-true}) }  \,\displaystyle{ \frac{ \displaystyle \phi\ prop\ \qquad p\in \phi\  }
{\displaystyle \mathsf{true}\in \phi}} 
\end{array}
$$
 saying that  a proof of a proposition
is {\it unique} and equal to a canonical proof term called
$\mathsf{true}$.
Of course, these rules can not be added to an extensional theory identifying propositions  with sets as Martin-L{\"o}f's one in \cite{ML84}, because they would trivialize all constructors. Moreover, these rules
are not present in the intensional level \mtt\ because proof-irrelevance is a typical
extensional condition. Indeed,  \emtt-propositions  can be thought of as quotients
of intensional propositions under the trivial equivalence relation between proofs. 
\item
Other key differences between the type theories \mtt\ and \emtt\ 
  are the addition in \emtt\ 
of quotient sets 
$$A/\rho\ set \ [\Gamma]$$
 provided that $\rho $ is a small equivalence relation
$\rho \
prop_s\ [x\in A,y\in A]$ on the set $A$, and the addition of the power-collection of the singleton and of
the power-collection of a generic set  $A$ 
$$ \enspace {\cal P}(1)\enspace \qquad \qquad \ A\rightarrow  {\cal P}(1)\enspace $$

\item
A further difference  between the type theories \mtt\ and \emtt\  concerns
 the equality rules between terms. Indeed in \emtt\ equality rules between terms
are the usual ones typical of an extensional type theory in \cite{ML84}
preserving all term constructors. In particular, equality of lambda-functions
is extensional, namely it is possible to prove

$$(\forall x\in A) \mbox{$\orig{Eq}$}(\, B(x) , f(x)\, ,\,  g(x))
\ \rightarrow \ \mbox{$\orig{Eq}$}(\,(\Pi x\in A)B(x)\, ,\
 \lambda x. f(x)\ ,\ \lambda x. g(x)\, )$$
This proposition is not necessarily provable at the intensional level \mtt\ when substituting
the extensional propositional equality $\mbox{$\orig{Eq}$}(A, a,b)$
with the intensional one $\mbox{$\orig{Id}$}(A, a,b)$. 
\end{enumerate}

We end by recalling from \cite{m09} that {\it  a model for \mtt\ can be turned into a model for \emtt\ by using the interpretation of \emtt\ into \mtt\ described in} \cite{m09}. Therefore in the following we are going to define a realizability interpretation just for \mtt, to get one also for \emtt.

\subsection{Untyped syntax of  \mtts}
Usually in type theory the syntax is introduced \emph{in fieri}; for example terms are introduced typically after deriving some conditions or constraints which are required to define them. However for semantical purposes it looks more convenient to present the syntax \emph{a priori in a partial way} by eliminating parts of usual restrictions.

Therefore, since we want to define a realizability interpretation for \mtts, we introduce here the syntax of all \mtts-type and term constructors in a partial way and we refer the reader  to look at \cite{m09} for all the \mtt-rules. Then we will define a partial interpretation for terms of our \emph{extended} syntax and check that this interpretation is well defined in case the constraints for introducing them are validated by the model.

\begin{definition}
Let $\cx$ be a context, i.\,e.\, $\cx=[x_{1},...,x_{\mathsf{n}}]$ is a possibly empty list of distinct variables. \emph{Terms, small propositions, sets, propositions and collections} in context are defined according to the following conditions. If
\begin{enumerate}
\item $t\cx, t'\cx, t''\cx, s\cxy, s'\cxy,r\cxyz,q\cxyzu$ are terms in context;
\item $\phi\cx, \phi'\cx, \psi\cxy$ are small propositions in context;
\item $A\cx, A'\cx, B\cxy$ are sets in context;
\item $\eta\cx, \eta'\cx, \rho\cxy$ are propositions in context;
\item $D\cx, E\cxy$ are collections in context, 
\end{enumerate}
then
\begin{enumerate}
\item $x_{i}\cx$ is a term in context;
\item[]{\it the empty set eliminator} $\mathsf{emp}_{0}(t)\cx$ is a term in context;
\item[]  {\it the singleton constant} $\star\cx$ and {\it the singleton eliminator} $El_{N_{1}}(t,t')\cx$ are terms in context;
\item[] {\it the zero constant} $0\cx$, {\it the successor constructor} $\mathsf{succ}(t)\cx$ and  {\it the  eliminator of natural numbers }$El_{N}(t,t',(y,z)r)\cx$ are terms in context\footnote{The rules for these constructors derive from those of $List(N_{1})$ in \mtt\ by identifying $0$ with $\epsilon$, $\mathsf{succ}(t)$ with $\mathsf{cons}(t,\star)$ and $El_{N}(t,t',(y,z)r)$ with $El_{List(N_{1})}(t,t',(y,y',z)r)$.};
\item[] {\it the lambda abstraction of dependent product} $\lambda y.s\cx$ and {\it its application} $\mathsf{Ap}(t,t')\cx$ are terms in context;
\item[]{\it the pairing of strong indexed sum} $\pair{t}{t'}\cx$ and
{its eliminator} $El_{\Sigma}(t,(y,z)r)\cx$ are terms in context;
\item[] {\it the first injection of  binary disjoint sum} 
$\mathsf{\mathsf{inl}}(t)\cx$ and {\it its second injection} $\mathsf{inr}(t)\cx$ and {\it its eliminator} $El_{+}(t,(y)s,(y)s')\cx$ are terms in context;
\item[] {\it the empty list} $\epsilon\cx$, {\it the list constructor} $\mathsf{cons}(t,t')\cx$ and {\it its eliminator} $El_{List}(t,t',(y,z,u)q)\cx$ are terms in context;
\item[]  {\it the false eliminator} $\mathsf{r}_{0}(t)\cx$ is a term in context;
\item[] {\it the pairing of conjunction} $\langle t,_{\wedge} t'\rangle\cx$, and {\it its first and second projections} $\pi^{\wedge}_{1}(t)\cx$ and  $\pi^{\wedge}_{2}(t)\cx$  are terms in context;
\item[] {\it the first injection of disjunction} $\mathsf{\mathsf{inl}}_{\vee}(t)\cx$, {\it the second injection of disjunction} $ \mathsf{inr}_{\vee}(t)\cx$ and {\it its eliminator} $El_{\vee}(t,(y)s,(y)s')\cx$ are terms in context;
\item[]  {\it the lambda abstraction of implication} $\lambda_{\rightarrow} y.s\cx$ and {\it its application} $\mathsf{Ap}_{\rightarrow}(t,t')\cx$ are terms in context;
\item[] {\it the pairing of existential quantification} $\langle t,_{\exists} t'\rangle\cx$ and {\it its eliminator}  $El_{\exists}(t,(y,z)r)\cx$  are terms in context;
\item[] {\it the lambda abstraction of universal quantification} $\lambda_{\forall} y.s\cx$ and  {\it its application} $\mathsf{Ap}_{\forall}(t,t')\cx$ are terms in context;
\item[] {\it the Propositional Identity term constructor} $\mathsf{id}(t)\cx$ and {\it its eliminator}  $El_{\mathsf{Id}}(t,t',t'',(y)s)\cx\footnote{In the rules for $\mathsf{Id}(A,a,b)$ of $\mtt$ the eliminator $El_{\mathsf{Id}}(p,(x)c)$ is substituted by an eliminator $El_{\mathsf{Id}}(a,b,p,(x)c)$ with explicit reference to $a\in A$ and $b\in A$. The rules remain the same.}$ are terms in context;
\item[] {\it the empty set code} $\widehat{N_{0}}[\underline{x}]$, {\it the singleton code} $\widehat{N_{1}}[\underline{x}]$, {\it the natural numbers set code} $\widehat{N}[\underline{x}]$, {\it the dependent product code} $(\widehat{\Pi y\in A})s[\underline{x}]$, {\it the dependent sum code} $(\widehat{\Sigma y\in A})s[\underline{x}]$, {\it the disjoint sum code} $t\widehat{+}t'[\underline{x}]$, {\it the list code} $\widehat{List}(t)[\underline{x}]$,  {\it the falsum code} $\widehat{\bot}$, {\it the conjunction code} $t\widehat{\wedge}t'$,   {\it the disjunction code} $t\widehat{\vee}t'$,   {\it the implication  code} $ t\widehat{\rightarrow}t'$,  {\it the existential quantification  code} $(\widehat{\exists y \in A})s\cx$, {\it the universal quantification  code} $(\widehat{\forall}y \in A)s\cx$ and {\it the propositional identity code} $\widehat{\mathsf{Id}}(A,t,t')\cx$ are terms in context;\\
\item $\bot\cx$ is a small proposition in context;
\item[] $\tau(t)\cx$ is a small proposition in context;
\item[] $\phi\wedge \phi'\cx$, $\phi\vee\phi'\cx$ and $\phi\rightarrow \phi'\cx$ are small propositions in context;
\item[] $(\exists y\in A)\,\psi\cx$ and $(\forall y\in A)\,\psi\cx$ are small propositions in context;
\item[] $\mathsf{Id}(A,t,t')\cx$ is a small proposition in context;\\
\item $\phi\cx$ is a set in context;
\item[] $N_{0}\cx, N_{1}\cx$ and $N\cx$ are sets in context;
\item[] $(\Pi y\in A)\,B\cx$, $(\Sigma y\in A)\,B\cx$, $A+A'\cx$ and $List(A)\cx$ are sets in context;\\
\item $\phi\cx$ is a proposition in context;
\item[] $\eta\wedge \eta'\cx$, $\eta\vee\eta'\cx$ and $\eta\rightarrow \eta'\cx$ are propositions in context;
\item[] $(\exists y\in D)\,\rho\cx$ and $(\forall y\in D)\,\rho\cx$ are propositions in context;
\item[] $\mathsf{Id}(D,t,t')\cx$ is a proposition in context;\\
\item $\eta\cx$ is a collection in context;
\item[] $A\cx$ is a collection in context;
\item[] $\mathsf{Set}\cx$ is a collection in context;
\item[] $\mathsf{prop_s}\cx$ is a collection in context;
\item[] $A\rightarrow \mathsf{prop_s}\cx$ is a collection in context;
\item[] $(\Sigma y\in D)E\cx$ is a collection in context.\\
\end{enumerate}
For sets in context $A\,[\underline{x}]$ we define an abbreviation $\widehat{A}\,[\underline{x}]$  as follows:\\
\begin{enumerate}
\item $\widehat{\bot}$, $\widehat{N_{0}}$, $\widehat{N_{1}}$ and $\widehat{N}$ were already defined;
\item $\widehat{((\Pi y\in A)\,B)}=(\widehat{\Pi y\in A})\,\widehat{B}$, $\widehat{((\Sigma y\in A)\,B)}=(\widehat{\Sigma y\in A})\,\widehat{B}$, 
\item $\widehat{A+A'}=\widehat{A}\widehat{+}\widehat{A}'$, $\widehat{List(A)}=\widehat{List}(\widehat{A})$,
\item $\widehat{\phi\wedge \phi'}=\widehat{\phi}\,\widehat{\wedge}\,\widehat{\phi'}$, $\widehat{\phi\vee \phi'}=\widehat{\phi}\,\widehat{\vee}\,\widehat{\phi'}$, $\widehat{\phi\rightarrow \phi'}=\widehat{\phi}\,\widehat{\rightarrow}\,\widehat{\phi'}$,
\item $\widehat{((\exists y\in A)\,\psi)}=(\widehat{\exists y\in A})\,\widehat{\psi}$, $\widehat{((\forall y\in A)\,\psi)}=(\widehat{\forall y\in A})\,\widehat{\psi}$, $\widehat{\mathsf{Id}(A,t,s)}=\widehat{\mathsf{Id}}\,(A,t,s)$,
\item $\widehat{\tau(t)}=t$.
\end{enumerate}

\end{definition}

It is clear that the previous definition is overabundant with respect to the common use in type theory. We introduced some terms which we will never find in any standard type theory, as for example the term $0\widehat{\wedge}El_{N_{1}}(\lambda x.x,\lambda_{\rightarrow}y.y)$
which is obtained by gluing together terms which usually have types which are not compatible. For example $0$ is usually typed as a natural number, while $\widehat{\wedge}$ connects codes for small propositions.

\section{The realizability interpretation for \mtts}

The preliminary step in the presentation of the Kleene realizability interpretation consists in presenting the theory of \emph{Inductive Definitions} $\tar$ in which we will interpret $\mtts$.

\subsection{The system $\tar$}
The system $\tar$ is a predicative fragment of second-order arithmetic, more precisely it is the predicative fragment of second-order arithmetic extending Peano arithmetics with some (not necessarily least) fix points for each positive arithmetical operator. Its number terms are number variables (we assume that these variables are equal to those of $\mtts$ ), 
the constant $0$ and the terms built by applying the unary successor functional symbol $succ$ and the binary sum and product functional symbols $+$ and $*$ to number terms. Set terms are only set variables $X,Y,Z...$. The \emph{arithmetical} formulas  are obtained starting from $t=s$ and $t\varepsilon X$ with $t,s$ number terms and $X$ a set variable, by applying the connectives $\wedge, \vee, \neg, \rightarrow$ and the number quantifiers $\forall x$, $\exists x$. 
Moreover let us give the following two definitions.
\begin{definition} An occurrence of a set variable $X$ is \emph{positive} in an arithmetical formula $\varphi$ if and only if $\varphi$ is $t\varepsilon X$ for some number term $t$ or $\varphi$ is $\psi\wedge \psi'$, $\psi'\wedge \psi$, $\psi\vee \psi'$, $\psi'\vee \psi$ , $\psi'\rightarrow \psi$, $\exists x\, \psi$ or $\forall x\, \psi$ and the occurrence of $X$ is a positive occurrence of $X$ in $\psi$.
\end{definition}
\begin{definition} An arithmetical formula $\varphi$ with exactly one free number variable $n$ and one free set variable $X$ which occurs only positively is called an \emph{admissible} formula.
\end{definition}
In order to define the system $\tar$ we add to the language of arithmetic a unary predicate symbol $P_{\varphi}$ for every admissible formula $\varphi$ . The atomic formulas of $\tar$ are 
\begin{enumerate}
\item $t=s$ with $t$ and $s$ number terms,
\item $t\varepsilon X$ with $t$ a number term and $X$ a set variable,
\item $P_{\varphi}(t)$ with $t$ a number term and $\varphi$ an admissible formula.
\end{enumerate}
All formulas of $\tar$ are obtained by atomic formulas by applying connectives, number quantifiers and set quantifiers. 
\\
\\
The axioms of $\tar$ are the axioms of Peano Arithmetic plus the following three axiom schemata: 
\begin{enumerate}
\item \emph{Comprehension schema}: for all formulas $\varphi(x)$ of $\tar$ without set quantifiers
\[\exists X\forall x (x\varepsilon X \leftrightarrow \varphi(x))\]  
\item  \emph{Induction schema}: for all formulas $\varphi(x)$ of $\tar$ 
\[(\varphi(0)\wedge \forall x(\varphi(x)\rightarrow \varphi(succ(x))))\rightarrow \forall x \,\varphi(x)\]
\item \emph{Fix point schema}: for all admissible formulas $\varphi$ 
\[\varphi[P_{\varphi}/X]\leftrightarrow P_{\varphi}(x)\] 
where $\varphi[P_{\varphi}/X]$ is the result of substituting in $\varphi$ all instances of $x\varepsilon X$ with $P_{\varphi}(x)$.
\end{enumerate} 

The system $\tar$ allows us to define predicates as fix points, by using axiom schema $3$, if they are presented in a appropriate way (i.\,e.\,using admissible formulas).

A \emph{definable class} ${\cal C}$ of $\tar$ is a formal writing $\{x|\varphi(x)\}$ where $\varphi(x)$ is a formula of $\tar$. In this case we write $x\varepsilon {\cal C}$ as a shorthand for $\varphi(x)$.\\

\emph{Notation of computable operators in $\tar$}.

As it is well known, it is certainly possible to express a G\"odelian coding of recursive functions in $\tar$ using Kleene's predicate since it is already possible to do this in $\mathsf{PA}$. In particular we can consider a definitional extension of $\tar$ (which we still call $\tar$) in which there are terms with Kleene's brackets $\{t\}(s)$ and there is a predicate $\{t\}(s)\downarrow$ stating that the term with Kleene's brackets is well defined ($s$ is in the domain of the recursive function coded by $t$). We will write $\{t\}(s_{1},...,s_{\mathsf{n}})$ as a shorthand defined by induction: it is $\{t\}(s_{1})$ if $n=1$ while if $n>1$ and if we have already defined $\{t\}(s_{1},...,s_{\mathsf{n}})$, then $\{t\}(s_{1},...,s_{\mathsf{n}+1})=\{\{t\}(s_{1},...,s_{\mathsf{n}})\}(s_{\mathsf{n}+1})$. We denote by $\mathbf{succ}$ a numeral for which in $\{\mathbf{succ}\}(x)=succ(x)$ in $\tar$.

 As we well know, the s-m-n lemma (see e.\,g.\,\cite{odi}) gives  the structure of a partial combinatorial algebra to natural numbers endowed with Kleene application and this structure can be expressed in $\tar$. In particular we can find numerals $\mathbf{p},\mathbf{p}_{1},\mathbf{p}_{2}$ representing a fixed primitive recursive bijective pairing function with primitive recursive first and second projections. We will write $p_{1}(x)$, $p_{2}(x)$ and $\langle x,y\rangle$ as abbreviations for $\{\mathbf{p}_{1}\}(x)$, $\{\mathbf{p}_{2}\}(x)$ and $\{\mathbf{p}\}(x,y)$ respectively. It is also possible to define a numeral $\mathbf{ite}\footnote{if then else}$ representing the definition by cases ($\{\mathbf{ite}\}(n,m,l)\simeq\footnote{$a\simeq b$ means that $a\downarrow$ if and only if $b\downarrow$ and in this case $a=b$ in $\tar$.} m$ if $n=0$, $\{\mathbf{ite}\}(n,m,l)\simeq l$ if $n\neq 0$). We can also encode recursively finite list of natural numbers with natural numbers in such a way that the empty list is coded by $0$ and the concatenation is a recursive function which can be coded by a numeral $\mathbf{cnc}$. We have moreover numerals $\mathbf{rec}$ and $\mathbf{listrec}$ representing natural numbers recursion and lists recursion. These numbers in particular satisfy the following requirements:
\begin{enumerate}
\item $\{\rec\}(n,m,0)\simeq n$; 
\item $\{\rec\}(n,m,k+1)\simeq \{m\}(k,\{\rec\}(n,m,k))$;
\item $\{\lrec\}(n,m,0)\simeq n$;
\item $\{\lrec\}(n,m,\mathbf{cnc}(k,l))\simeq \{m\}(k,l,\{\lrec\}(n,m,k))$.
\end{enumerate}
For this representation of lists, the component functions $(-)_{j}$, turn out to be recursive. 

Moreover we can always define $\lambda$-terms $\Lambda n.t$ in $\tar$ for terms $t$ built with numerals, variables and Kleene application, in such a way that $\{\Lambda x.t\}(n)\simeq t[n/x]$ and $\{\Lambda x_{1}...\Lambda x_{\mathsf{n}}. t\}(n)\simeq \Lambda x_{2}...\Lambda x_{\mathsf{n}}.t[n/x_{1}].$

\subsection{The definition of interpretation}

The realizability interpretation for   \mtts\ 
we are going to describe
is a modification of Beeson's realizability semantics \cite{beeson} for the extensional version of first order Martin-L{\"o}f's type theory with one universe~\cite{ML84}. So it
 will be given in $\tar$ as Beeson's one.
Here we describe the key points of such an interpretation on which we follow
Beeson's semantics:
\begin{list}{-}{ }
\item all types of \mtts\  are interpreted as quotients of definable classes of $\tar$, intended as classes of ``their realizers''. In particular we use Beeson's technique of interpreting Martin-L\"of's universe to interpret the collection of (codes for) small propositions of \mtts. In order to do this it is crucial to have fix points and hence this is why we work in the theory $\tar$;

\item terms are interpreted as (codes) of recursive functions;

\item equality between terms in context is interpreted as extensional equality of recursive functions; 

\item the interpretation of substitution will be proven to be equivalent to the substitution in interpretation; 

\item we interpret $\lambda$-abstraction by using {\it s-m-n} lemma of computability, but then, in order to validate the condition of the previous point, we impose equality of type-theoretic functions to be extensional. Therefore the principle of Extensional Equality of Functions will turn out to be valid in our model.
\end{list}

\noindent
Instead we do not follow Beeson's semantics in the interpretation of propositions:
\begin{list}{-}{ }
\item  in order to validate {\bf formal Church Thesis} we interpret
propositions as trivial\footnote{A quotient is trivial if it is determined by a trivial relation i.\,e.\, a relation for which all pairs of elements are equivalent.} quotients of original Kleene realizability.
As a consequence Martin-L{\"o}f's isomorphism of propositions-as-sets
 together with the validity of the {\bf Axiom of Choice} is not validated
in our realizability semantics contrary to Beeson's one.
\end{list}





We can summarize the interpretation of terms and types with the following table:
\[
\begin{tabular}{|c|c|}
\hline
Terms &(codes) of recursive functions\\
\hline
Collections		&Quotients of definable classes $(C,\simeq)$\\
\hline
Propositions &quotients of definable classes on trivial $\simeq$\\
\hline
\end{tabular}
\]

\subsubsection*{The interpretation of terms}
Before giving the interpretation of \mtts-terms, we need to present explicitly a convention about how to encode \mtts-sets with numerals. We will code sets 
as $\{\mathbf{p}\}(a, \langle b_{1},...,b_{\mathsf{n}}\rangle)$, where $a$ is a number coding a particular constructor and $\langle b_{1},...,b_{\mathsf{n}}\rangle$ is a lists of codes for ingredients needed by the constructor itself. The following table makes evident the choices for $a$:
\\
\\
{\tiny
\[
\begin{tabular}{|c|c|c|c|c|c|c|c|c|c|c|c|}
\hline 
& & & & & & & & & & &  \\
$N_{0},N_{1},N$ &$\Pi$ & $\Sigma$ & $+$ &$List$ &$\bot$ & $\wedge$ &$\vee$ &$\rightarrow$ & $\exists$  & $\forall$    & $\mathsf{Id}$\\
& & & & & & & & & & &  \\
\hline 
& & & & & & & & & & &  \\
1 &2 &3 &4 &5 &6 &7 &8 &9 &10 &11 &12 \\
& & & & & & & & & & &  \\
\hline
\end{tabular}
\]
}
\\
\\
Notice that codes for small propositions must have $a>5$.

We can now proceed to the definition of the interpretation of \mtts-terms.
\begin{definition}
Terms in context $t[x_{1},...,x_{\mathsf{n}}]$ are interpreted as 
$${\cal I}(t[x_{1},...,x_{\mathsf{n}}])=\Lambda x_{1}...\Lambda x_{\mathsf{n}}.{\cal I}(t)$$
where ${\cal I}(t)$ are terms of the extended language of $\tar$ defined as follows
\begin{enumerate}
\item If $x$ is a variable, then ${\cal I}(x)=x$;
\item ${\cal I}(\mathsf{emp}_{0}(t))={\cal I}(\mathsf{r}_{0})=0$;
\item ${\cal I}(\star)=0$ and ${\cal I}(El_{N_{1}}(t,t'))={\cal I}(t')$;
\item ${\cal I}(0)=0$ and ${\cal I}(\mathsf{succ}(t))=\{\mathbf{succ}\}({\cal I}(t))$,
\item[]${\cal I}(El_{N}(t,t',(y,z)r))=\{\rec\}({\cal I}(t'),\Lambda y.\Lambda z. {\cal I}(r),{\cal I}(t))$;
\item ${\cal I}(\lambda y.s)={\cal I}(\lambda_{\rightarrow} y.s)={\cal I}(\lambda_{\forall} y.s)=\Lambda y. {\cal I}(s)$,
\item[]${\cal I}(\mathsf{Ap}(t,t'))={\cal I}(\mathsf{Ap}_{\rightarrow}(t,t'))={\cal I}(\mathsf{Ap}_{\forall}(t,t'))=\{{\cal I}(t)\}({\cal I}(t'))$;
\item ${\cal I}(\pair{t}{t'})={\cal I}(\langle t,_{\wedge} t'\rangle)={\cal I}(\langle t,_{\exists} t'\rangle)=\{\mathbf{p}\}({\cal I}(t),{\cal I}(t'))$, 
\item[]${\cal I}(El_{\Sigma}(t,(y,z)r))={\cal I}(El_{\exists}(t,(y,z)r))=\{\Lambda y.\Lambda z. {\cal I}(r)\}(\{\mathbf{p}_{1}\}({\cal I}(t)),\{\mathbf{p}_{2}\}({\cal I}(t)))$, 
\item[]${\cal I}(\pi^{\wedge}_{1}(t))=\{\mathbf{p}_{1}\}({\cal I}(t))$,
\item[]${\cal I}(\pi^{\wedge}_{2}(t))=\{\mathbf{p}_{2}\}({\cal I}(t))$;
\item ${\cal I}(\mathsf{\mathsf{inl}}(t))={\cal I}(\mathsf{\mathsf{inl}}_{\vee}(t))=\{\mathbf{p}\}(0,{\cal I}(t))$, 
\item[]${\cal I}(\mathsf{inr}(t))={\cal I}(\mathsf{inr}_{\vee}(t))=\{\mathbf{p}\}(1,{\cal I}(t))$,
\item[]${\cal I}(El_{+}(t,(y)s,(y)s'))={\cal I}(El_{\vee}(t,(y)s,(y)s'))=$\\
$\qquad\qquad\qquad\ite{\mathbf{p}_{1}({\cal I}(t))}{\{\Lambda y. {\cal I}(s)\}(\{\mathbf{p}_{2}\}({\cal I}(t)))}{\{\Lambda y. {\cal I}(s')\}(\{\mathbf{p}_{2}\}({\cal I}(t)))}$;
\item ${\cal I}(\epsilon)=0$ and ${\cal I}(\mathsf{cons}(t,t'))=\{\mathbf{cnc}\}({\cal I}(t),{\cal I}(t'))$,
\item[]$El_{List}(t,t',(y,z,u)q)=\{\lrec\}({\cal I}(t'),\Lambda y.\Lambda z. \Lambda u. {\cal I}(q),{\cal I}(t))$;
\item ${\cal I}(\mathsf{id}(t))=0$,
\item[]${\cal I}(El_{\mathsf{Id}}(t,t',t'',(y)s))=\{\Lambda y.{\cal I}(s)\}({\cal I}(t))$;
\item ${\cal I}(\widehat{N_{0}})=\{\mathbf{p}\}(1,0)$, ${\cal I}(\widehat{N_{1}})=\{\mathbf{p}\}(1,1)$ and ${\cal I}(\widehat{N})=\{\mathbf{p}\}(1,2)$, 
\item[] ${\cal I}((\widehat{\Pi y \in A})s)=\{\mathbf{p}\}(2,(\{\mathbf{p}\}({\cal I}(\widehat{A}),(\Lambda y.{\cal I}(s)))))$, 
\item[] ${\cal I}((\widehat{\Sigma y \in A})s)=\{\mathbf{p}\}(3,(\{\mathbf{p}\}({\cal I}(\widehat{A}),(\Lambda y.{\cal I}(s)))))$,
\item[] ${\cal I}(t\widehat{+} t')=\{\mathbf{p}\}(4,(\{\mathbf{p}\}({\cal I}(t),{\cal I}(t')))$, 
\item[] ${\cal I}(\widehat{List}(t))=\{\mathbf{p}\}(5,{\cal I}(t))$, 
\item[] ${\cal I}(\widehat{\bot})=\{\mathbf{p}\}(6,0)$, 
\item[] ${\cal I}(t\widehat{\wedge} t')=\{\mathbf{p}\}(7,(\{\mathbf{p}\}({\cal I}(t),{\cal I}(t')))$, 
\item[] ${\cal I}(t\widehat{\vee} t')=\{\mathbf{p}\}(8,(\{\mathbf{p}\}({\cal I}(t),{\cal I}(t')))$, 
\item[] ${\cal I}(t\widehat{\rightarrow} t')=\{\mathbf{p}\}(9,(\{\mathbf{p}\}({\cal I}(t),{\cal I}(t')))$, 
\item[] ${\cal I}((\widehat{\exists y \in A})s)=\{\mathbf{p}\}(10,(\{\mathbf{p}\}({\cal I}(\widehat{A}),(\Lambda y.{\cal I}(s)))))$, 
\item[] ${\cal I}((\widehat{\forall y \in A})s)=\{\mathbf{p}\}(11,(\{\mathbf{p}\}({\cal I}(\widehat{A}),(\Lambda y.{\cal I}(s)))))$,
\item[] ${\cal I}(\widehat{\mathsf{Id}}(A,t,t'))=\{\mathbf{p}\}(12,(\{\mathbf{p}\}({\cal I}(\widehat{A}),(\{\mathbf{p}\}({\cal I}(t),{\cal I}(t'))))))$,
\end{enumerate}

%
\end{definition}

For the sake of example let us consider the interpretation of the term in context $t[x,y,z]$ defined as $\widehat{\mathsf{Id}}(\mathsf{Id}(N,x,x),y,z)[x,y,z]$:
\[\begin{array}{rl}
{\cal I}(t)[x,y,z])&=\Lambda x.\Lambda y. \Lambda z. {\cal I}(\widehat{\mathsf{Id}}(\mathsf{Id}(N,x,x),y,z))\\
										&=\Lambda x.\Lambda y.\Lambda z. \{\mathbf{p}\}(12,\,\{\mathbf{p}\}({\cal I}(\widehat{\mathsf{Id}(N,x,x)}),\{\mathbf{p}\}(y,z))\\\
										&=\Lambda x.\Lambda y.\Lambda z. \{\mathbf{p}\}(12,\,\{\mathbf{p}\}({\cal I}(\widehat{\mathsf{Id}}(N,x,x)),\{\mathbf{p}\}(y,z)))\\
										&=\Lambda x.\Lambda y.\Lambda z. \{\mathbf{p}\}(12,\,\{\mathbf{p}\}( \{\mathbf{p}\}(12,\,\{\mathbf{p}\}({\cal I}(\widehat{N}),\{\mathbf{p}\}(x,x))),\,\{\mathbf{p}\}(y,z)))\\ 	
										&=\Lambda x.\Lambda y.\Lambda z. \{\mathbf{p}\}(12,\,\{\mathbf{p}\}( \{\mathbf{p}\}(12,\,\{\mathbf{p}\}(\{\mathbf{p}\}(1,2),\{\mathbf{p}\}(x,x))),\,\{\mathbf{p}\}(y,z))).\\
\end{array}\]

We say that an interpretation of a term in context $t[\underline{x}]$ is well defined if ${\cal I}(t[\underline{x}])\downarrow$ is provable in $\tar$. Notice that the interpretations of terms in non-empty contexts are always well defined.

Notice moreover that in $\tar$
\begin{enumerate}
\item ${\cal I}(El_{N_{1}}(\star, t'))\simeq {\cal I}(t')$;
\item ${\cal I}(El_{N}(0,t,(y,z)s))\simeq {\cal I}(t)$;
\item ${\cal I}(El_{N}(\mathsf{succ}(t'),t,(y,z)s))\simeq {\cal I}(s)[{\cal I}(t')/y,{\cal I}(El_{N}(t',t,(y,z)s))/z]$
\item ${\cal I}(\mathsf{Ap}(\lambda y.s,t))\simeq {\cal I}(s)[{\cal I}(t)/y]$;
\item ${\cal I}(\mathsf{Ap}_{\rightarrow}(\lambda_{\rightarrow} y.s,t))\simeq {\cal I}(s)[{\cal I}(t)/y]$;
\item ${\cal I}(\mathsf{Ap}_{\forall}(\lambda_{\forall} y.s,t))\simeq {\cal I}(s)[{\cal I}(t)/y]$;
\item ${\cal I}(El_{\Sigma}(\langle t,t'\rangle,(y,z)r))\simeq {\cal I}(r)[{\cal I}(t)/y,{\cal I}(t')/z]$;
\item ${\cal I}(El_{\exists}(\langle t,_{\exists}t'\rangle,(y,z)r))\simeq {\cal I}(r)[{\cal I}(t)/y,{\cal I}(t')/z]$;
\item ${\cal I}(\pi^{\wedge}_{1}(\langle t,_{\wedge}t'\rangle))\simeq {\cal I}(t)$;
\item ${\cal I}(\pi^{\wedge}_{2}(\langle t,_{\wedge}t'\rangle))\simeq {\cal I}(t')$;
\item ${\cal I}(El_{+}(\mathsf{\mathsf{inl}}(t),(y)s,(y)s'))\simeq {\cal I}(s)[{\cal I}(t)/y]$;
\item ${\cal I}(El_{+}(\mathsf{inr}(t),(y)s,(y)s'))\simeq {\cal I}(s')[{\cal I}(t)/y]$;
\item ${\cal I}(El_{\vee}(\mathsf{\mathsf{inl}_{\vee}}(t),(y)s,(y)s'))\simeq {\cal I}(s)[{\cal I}(t)/y]$;
\item ${\cal I}(El_{\vee}(\mathsf{inr}_{\vee  }(t),(y)s,(y)s'))\simeq {\cal I}(s')[{\cal I}(t)/y]$;
\item ${\cal I}(El_{\mathsf{Id}}(t,\mathsf{id}(t),(y)s))\simeq {\cal I}(s)[{\cal I}(t)/y]$;
\item ${\cal I}(El_{List}(\epsilon,t',(y,z,u)q))\simeq {\cal I}(t')$;
\item ${\cal I}(El_{List}(\mathsf{cons}(t,t''),t',(y,z,u)q))\simeq {\cal I}(q)[{\cal I}(t)/y,{\cal I}(t'')/z,{\cal I}(El_{List}(t,t',(y,z,u)q))/u]$.
 
\end{enumerate}

\subsubsection*{The interpretation of sets}
Here we define the interpretation of sets in $\mtts$ with the exception of those obtained as $\tau(p)$ for some term $p$. Every such a set is interpreted as a definable quotient of a definable class of $\tar$ (and actually of $\mathsf{HA}$). This means that every set $A$ is interpreted as a pair
$${\cal I}(A)=(\,{\cal J}(A)\,,\,\sim_{{\cal I}(A)}\,)$$
where ${\cal J}(A)$ is a definable class of $\tar$ and $\sim_{{\cal I}(A)}$ is a definable equivalence relation on the class ${\cal J}(A)$.

Since sets in \mtt\ include small propositions, here we also define a realizability relation between natural numbers and propositions. Indeed it is more convenient to define the realizability interpretation of propositions by adopting
an extension of usual Kleene's interpretation of intuitionistic connectives.

Note that we use the notation ${\cal I}(A)[s/y]$ to mean the definable class in which we substitute $y$ with $s$ in the membership and in the equivalence relation of ${\cal I }(A)$.



\begin{definition}
\label{inteset}
We define in $\tar$ a realizability relation $n\,\Vdash\, \phi$ between natural numbers and small propositions, by induction on the definition of small propositions $\phi$, simultaneously together with the definition of the following
formulas $n\varepsilon {\cal J}(A)$  and $n\sim_{{\cal I}(A)}m$ for sets $A$, by induction on the definition of sets (with the exception of those obtained using $\tau(p)$ for some term $p$), as follows:
\\
\begin{enumerate}
\item[$(\bot)$] $n\,\Vdash\, \bot$ is $\bot$;
\item[$(\wedge)$] $n\,\Vdash\, \phi\wedge \phi'$ is $( p_{1}(n)\,\Vdash\, \phi)\,\wedge\, ( p_{2}(n)\,\Vdash\, \phi')$;
\item[$(\vee)$] $n\,\Vdash\, \phi\vee \phi'$ is $( p_{1}(n)=0\,\wedge\,  p_{2}(n)\,\Vdash\, \phi)\,\vee\, ( p_{1}(n)\neq 0\,\wedge \, p_{2}(n)\,\Vdash\, \phi')$; 
\item[$(\rightarrow)$] $n\,\Vdash\, \phi\rightarrow \phi'$ is $\forall t\,((t\,\Vdash\, \phi)\,\rightarrow\,(\{n\}(t)\,\Vdash\, \phi'))$;
\item[$(\exists)$] $n\,\Vdash\, (\exists x\in A)\,\psi$ is $ p_{1}(n)\,\varepsilon\, {\cal J}(A)\,\wedge\, ( p_{2}(n)\,\Vdash\, \psi)[ p_{1}(n)/x]$;
\item[$(\forall)$] $n\,\Vdash\, (\forall x\in A)\,\psi$ is $\forall x\,(x\,\varepsilon\, {\cal J}(A)\,\rightarrow\, (\{n\}(x)\,\Vdash\, \psi))$;
\item[$(\mathsf{Id})$] $n\,\Vdash\, \mathsf{Id}(A,t,s)$ is ${\cal I}(t)\sim_{{\cal I}(A)}{\cal I}(s)$;
\item[$(N_{0})$] $n\,\varepsilon\, {\cal J}(N_{0})$ is $\bot$ and
\item[] $n\sim_{{\cal I}(N_{0})}m$ is $\bot$;

\item[$(N_{1})$] $n\,\varepsilon\, {\cal J}(N_{1})$ is $n=0$ and
\item[] $n\sim_{{\cal I}(N_{1})}m$ is $n=0\wedge n=m$;

\item[$(N)$] $n\,\varepsilon\, {\cal J}(N)$ is $n=n$ and
\item[] $n\sim_{{\cal I}(N)}m$ is $n=m$;

\item[$(\Pi)$] $n\,\varepsilon\, {\cal J}((\Pi x\in A)\,B)$ is \\
$\forall x\,(x\,\varepsilon\, {\cal J}(A)\rightarrow \{n\}(x)\in {\cal J}(B))\wedge\forall x \forall y\,(x\sim_{{\cal I}(A)}y\rightarrow \{n\}(x)\sim_{{\cal I}(B)}\{n\}(y)\}$\footnote{Note that the variable $x$ may be in ${\cal I}(B)$ here and in the following definition for $\Pi$ and $\Sigma$ sets, as it comes from the definition of the untyped syntax.} and
\item[] $n\sim_{{\cal I}((\Pi x\in A)\,B)}m$ is\\ $n\,\varepsilon\, {\cal J}((\Pi x\in A)B)\wedge m\,\varepsilon\, {\cal J}((\Pi x\in A)B)\wedge \forall x\,(x\,\varepsilon\, {\cal J}(A)\rightarrow \{n\}(x)\sim_{{\cal I}(B)}\{m\}(x))$;
\item[$(\Sigma)$] $n\,\varepsilon\, {\cal J}((\Sigma x\in A)\,B)$ is $ p_{1}(n)\,\varepsilon\,{\cal J}(A)\wedge  \forall x\,(x\sim_{{\cal I}(\mathsf{A})}p_{1}(n)\rightarrow p_{2}(n)\,\varepsilon\, {\cal J}(B))$ and
\item[] $n\sim_{{\cal I}((\Sigma x\in A)\,B)}m$ is the conjunction of $n\,\varepsilon\, {\cal J}((\Sigma x\in A)B)\wedge m\,\varepsilon\, {\cal J}((\Sigma x\in A)B)$ and \\$ p_{1}(n)\sim_{{\cal I}(A)} p_{1}(m)\wedge \forall x\, (x\sim_{{\cal I}(A)}p_{1}(n)\rightarrow p_{2}(n)\sim_{{\cal I}(B)} p_{2}(m))$;

\item[$(+)$] $n\,\varepsilon\, {\cal J}(A+A')$ is $( p_{1}(n)=0\wedge  p_{2}(n)\,\varepsilon\, {\cal J}(A))\vee ( p_{1}(n)=1 \wedge  p_{2}(n)\,\varepsilon\, {\cal J}(A'))$ and \\$n\sim_{{\cal I}(A+A')}m$ is the conjunction of $n\,\varepsilon\, {\cal J}(A+A')\wedge m\,\varepsilon\, {\cal J}(A+A')\wedge p_{1}(n)= p_{1}(m)$ and \\  $( p_{1}(n)=0\,\wedge\,  p_{2}(n)\sim_{{\cal I}(A)} p_{2}(m))\,\vee\,( p_{1}(n)=1 \,\wedge\,  p_{2}(n)\sim_{{\cal I}(A')} p_{2}(m))$;

\item[$(List)$] $n\,\varepsilon\, {\cal J}(List(A))$ is $\forall j\,(j<lh(n)\,\rightarrow\, (n)_{j}\,\varepsilon\, {\cal J}(A))$ and
\item[] $n\sim_{{\cal I}(List(A))}m$ is the conjunction of $n\,\varepsilon\, {\cal J}(List(A))\,\wedge\, m\,\varepsilon\, {\cal J}(List(A))$ and \\$lh(n)=lh(m)\,\wedge\, \forall j\,(j<lh(n)\,\rightarrow\, (n)_{j}\sim_{{\cal I}(A)}(m)_{j})$;

\item[$(\psi)$] $n\,\varepsilon\, {\cal J}(\psi)$ is $n\Vdash \psi$ and
\item[] $n\sim_{{\cal I}(\psi)}m$ is $n\,\varepsilon\, {\cal J}(\psi)\,\wedge\, m\,\varepsilon\, {\cal J}(\psi)$ (i.\,e.\,{\it proof-irrelevance}).
\end{enumerate}
\end{definition}

\begin{remark}
We can notice some preliminary properties of this realizability interpretation:
\begin{enumerate}
\item for every set $A$ we have that $\sim_{{\cal I}(A)}$ is really a definable equivalence relation on the definable class ${\cal J}(A)$, in fact\\
\begin{enumerate}
\item[] $n\,\varepsilon\, {\cal J}(A)\,\vdash_{\tar}\,n\sim_{{\cal I}(A)}n$\\
\item[] $n\sim_{{\cal I}(A)}m\,\vdash_{\tar}\,m\sim_{{\cal I}(A)}n$\\
\item[] $n\sim_{{\cal I}(A)}m\,\wedge\, m\sim_{{\cal I}(A)}l\,\vdash_{\tar}\,n\sim_{{\cal I}(A)}l$\\
\end{enumerate}
\item for every set $A$ we have that 
\[n\sim_{{\cal I}(A)}m\,\vdash_{\tar}\,n\,\varepsilon\, {\cal J}(A)\wedge m\,\varepsilon\, {\cal J}(A)\]
\item if \emph{numerical} sets are defined according to the following conditions
\begin{enumerate}
\item $N_{0}$, $N_{1}$ and $N$ are numerical sets;
\item if $A$ and $B$ are numerical sets, then $(\Sigma x\in A)\,B$, $A+B$ and $List(A)$ (if they are well defined) are numerical sets,
\end{enumerate}
then the equality of the interpretation of numerical sets is numerical, which means that 
\[n\sim_{{\cal I}(A)}m\,\vdash_{\tar}\,n=m\]
\item for all propositions $\psi$, the equivalence relation $\sim_{{\cal I}(\psi)}$ is trivial (i.\,e.\,all pairs of elements of ${\cal I}(\psi)$ are equivalent). This means that uniqueness of propositional proofs, called {\it proof-irrelevance}, is imposed.


\end{enumerate}
\end{remark}

\subsubsection*{The encoding of all \mtts-sets}
In the previous sections we have seen the interpretation of \mtts-sets which include small propositions. It remains to define the interpretation of proper collections, including
that of sets, small propositions and small propositional functions on a set.

The interpretation of the collection of small propositions $\mathsf{Set}$ in \tar\ is the most difficult
point and to define it we mimick the technique adopted by Beeson~\cite{beeson}  to interpret Martin-L{\"o}f's universe via a fix point
of some arithmetical operator with positive parameters. Hence, it is to define the interpretation of $\mathsf{Set}$, and in turn of the collection of small propositions $\mathsf{prop_{s}}$ and of small propositional functions $A\rightarrow \mathsf{prop_s}$ on a set $A$, that we need to employ the full power of \tar\ with fix points.

The idea is to define a \tar-formula which defines codes of sets with their interpretation {\it as a fix point}.
It appears necessary to define called $\mathbf{Set}(n)$ expressing that $n$ is a code of an \mtts-set together with its realizability interpretation in \tar.
Observe that in \mtts\ the type of all sets is not present and hence no \mtts-type will be interpreted as $\{n|\,\mathbf{Set}(n)\}$.
As in Beeson's semantics, to define the formula $\mathbf{Set}(n)$ of set codes with their arithmetical interpretation in $\tar$ we need to encode membership and equality of sets: $t \overline{\varepsilon}n$ and
$t \,\equiv_n\, s$. In turn in order to define them, we need to represent the notion of {\it a family of sets} used to interpret
an $\mtts$-dependent set.


 {\it A family of sets} coded by $m$ on a set coded by $n$ could be described by the formula
\[\mathbf{Set}(n)\,\wedge\, \forall t\,(t\,\overline{\varepsilon}\, n\,\rightarrow\, \mathbf{Set}(\{m\}(t)))\,\wedge\,\]
\[\;\;\;\;\;\;\forall t\forall s\,(t\equiv_{n} s\,\rightarrow \,(\forall j\,(j\,\overline{\varepsilon}\, \{m\}(t)\,\leftrightarrow\, j\,\overline{\varepsilon}\, \{m\}(s))\,\wedge\, \forall j\forall k\,(j\equiv_{\{m\}(t)}k\,\leftrightarrow\, j\equiv_{\{m\}(s)}k))).\]
But in this formula not all occurrences of 
$t\,\overline{\varepsilon}\, n$ and $t\,\equiv_{n}\, s$ are positive. However it is classically equivalent to the conjunction of the formula 
$\mathbf{Set}(n)\,\wedge\,(\neg t\,\overline{\varepsilon}\, n\, \vee\, \mathbf{Set}(\{m\}(t)))$
and the formula $\forall t \forall s\,(\neg t\,\equiv_{n}\,s\,\vee\, ( P_{1}\,\wedge\, P_{2}))$
where $P_{1}$ is
$$\forall j\,((\neg j\,\overline{\varepsilon}\, \{m\}(t)\,\vee\, j\,\overline{\varepsilon}\,\{m\}(s))\,\wedge\, (\neg j\,\overline{\varepsilon}\, \{m\}(s)\,\vee\, j\,\overline{\varepsilon}\,\{m\}(t)))$$
and $P_{2}$ is 
$$\forall j\forall k\,((\neg j\,\equiv_{ \{m\}(t)}k\,\vee\, j\,\equiv_{\{m\}(s)}k)\,\wedge\,(\neg j\,\equiv_{ \{m\}(s)}k\,\vee\, j\,\equiv_{\{m\}(t)}k))$$
simply substituting all the instances of the schema $a\rightarrow b$ with the classically equivalent $\neg a\vee b$.
Now the trick consists in defining some predicates $t\noteps n$ and $t\,\not\equiv_{n}\,s$ mimicking the negations of $t\,\overline{\varepsilon}\, n$ and $t\,\equiv_{n}s$ as fix point predicates, too,  in order to get a a {\bf positive} arithmetical
operator. 
Note that the use of a {\bf classical} arithmetic theory with fix points seems unavoidable to be able to interpret the collection of sets via a positive arithmetical operator.

From now on we write 
$$\mathbf{Fam}(m,n) \, \ \equiv\, \  \Set{n}\,\wedge\, \forall t\,(t\noteps n\,\vee\, \Set{\{m\}(t)})\,\wedge\,\forall t \forall s\,(t\,\not\equiv_{n}s\,\vee\, (P'_{1}\,\wedge\, P'_{2}))$$
where $P'_{1}$ and $P'_{2}$ are obtained from $P_{1}$ and $P_{2}$ by substituting
negated istances of membership and of equality predicates with their mentioned primitive negated versions
$$P_1'\, \equiv\, \forall j\,((j\noteps \{m\}(t)\,\vee\, j\,\overline{\varepsilon}\, \{m\}(s))\,\wedge\,(j\noteps \{m\}(s)\,\vee\, j\,\overline{\varepsilon}\, \{m\}(t)))$$
$$P_2'\, \equiv\, \forall j\forall k\,(( j\,\not\equiv_{ \{m\}(t)}k\,\vee\, j\,\equiv_{\{m\}(s)}k)\,\wedge\, ( j\,\not\equiv_{ \{m\}(s)}k\,\vee\, j\,\equiv_{\{m\}(t)}k)).$$

In order to define the positive clauses for the codes of sets we must introduce some notations. In this way we transform the clauses for realizability for sets automatically in the clauses needed to define the fix points $\mathbf{Set}(n)$, $t\overline{\varepsilon} n$, $t\noteps n$, $t \equiv_{n}s$ and $t\not\equiv_{n}s$.

First of all, we define a function $[\;]$ which assigns a value to a set according to the table in section $3.2$ as follows. 
\begin{enumerate}
\item if $\sigma$ is one of the symbols $A,\,A',\,B,\,\phi,\,\phi',\,\psi,\,t,\,s$, then $[\sigma]$ is $a,\,a',\,\{b\}(x),\,c,\,c',\,\{d\}(x),\,e,\,f$ respectively;
\item if $\sigma$ is $N_{0}$, $N_{1}$, $N$, $(\Pi x\in A)\,B$, $(\Sigma x\in A)\,B$, $A+A'$, $List(A)$ then $[\sigma]$ is $\pair{1}{0}$, $\pair{1}{1}$, $\pair{1}{2}$, $\pair{2}{\pair{a}{b}}$, $\pair{3}{\pair{a}{b}}$, $\pair{4}{\pair{a}{a'}}$, $\pair{5}{a}$ respectively;
\item if $\sigma$ is $\bot$, $\phi\wedge \phi'$, $\phi\vee \phi'$, $\phi\rightarrow \phi'$, $(\exists x\in A)\,\psi$, $(\forall x\in A)\,\psi$, $\mathsf{Id}(A,t,s)$ then $[\sigma]$ is $\pair{6}{0}$,  $\pair{7}{\pair{c}{c'}}$, $\pair{8}{\pair{c}{c'}}$, $\pair{9}{\pair{c}{c'}}$, $\pair{10}{\pair{a}{d}}$, $\pair{11}{\pair{a}{d}}$, $\pair{12}{\pair{a}{\pair{e}{f}}}$ respectively.
\end{enumerate}
We denote by $[\;]^{-1}$ the inverse function of $[\;]$.
Now, all clauses in the realizability interpretation of sets are defined using formulas which are obtained starting from arithmetical formulas or primitive formulas with $\varepsilon$ or $\sim$, by using connectives, first order quantifiers or explicit instances of substitution in $x$. For such formulas $\varphi$ we define $\varphi^{+}$ as follows:\\
\begin{enumerate}
\item if $\varphi$ is arithmetical, then $\varphi^{+}$ is defined as $\varphi$ itself. If $\varphi$ is a primitive formulas with $\varepsilon$ or $\sim$ we will transform $\varepsilon\,{{\cal J}(\sigma)}$ and $\sim_{{\cal I}(\sigma)}$ in $\overline{\varepsilon}\,[\sigma]$ and $\equiv_{[\sigma]}$ respectively, in order to obtain $\varphi^{+}$;
\item $(\varphi[\alpha/x])^{+}$ is $\varphi^{+}[\alpha/x]$;
\item $(\varphi\wedge \varphi')^{+}$ is $\varphi^{+}\wedge \varphi'^{+}$;
\item $(\varphi\vee \varphi')^{+}$ is $\varphi^{+}\vee \varphi'^{+}$;
\item $(\varphi\rightarrow \varphi')^{+}$ is $\overline{\varphi^{+}}\vee \varphi'^{+}$;
\item $(\forall u\, \varphi)^{+}$ is $\forall u\, \varphi^{+}$ for every variable $u$;
\item $(\exists u\, \varphi)^{+}$ is $\exists u\, \varphi^{+}$ for every variable $u$;
\end{enumerate}

where $\overline{\varphi}$ is defined by the following clauses:
\begin{enumerate}
\item if $\varphi$ is an arithmetical formula $\overline{\varphi}$ is $\neg \varphi$;
\item if $\varphi$ is a relation between two terms through $\overline{\varepsilon}$, $\noteps$, $\equiv$ or $\not\equiv$, then $\overline{\varphi}$ is obtained by transforming them in $\noteps$, $\overline{\varepsilon}$, $\not\equiv$ or $\equiv$ respectively;
\item $\overline{\varphi\wedge \varphi'}$ is $\overline{\varphi}\vee \overline{\varphi'}$;
\item $\overline{\varphi\vee \varphi'}$ is $\overline{\varphi}\wedge \overline{\varphi'}$;
\item $\overline{\forall u\, \varphi}$ is $\exists u\, \overline{\varphi}$ for every variable $u$;
\item $\overline{\exists u\, \varphi}$ is $\forall u\, \overline{\varphi}$ for every variable $u$;
\end{enumerate}

We can now define the positive clauses we needed. For $\tau$ equal to $\pair{1}{0}$, $\pair{1}{1}$, $\pair{1}{2}$, $\pair{2}{\pair{a}{b}}$, $\pair{3}{\pair{a}{b}}$, $\pair{4}{\pair{a}{a'}}$, $\pair{5}{a}$, $\pair{6}{0}$,  $\pair{7}{\pair{c}{c'}}$, $\pair{8}{\pair{c}{c'}}$, $\pair{9}{\pair{c}{c'}}$, $\pair{10}{\pair{a}{d}}$, $\pair{11}{\pair{a}{d}}$, $\pair{12}{\pair{a}{\pair{e}{f}}}$ we have the following clauses\footnote{By $n\varepsilon {\cal J}([\tau]^{-1})$ and  $n\sim_{{\cal I}([\tau]^{-1})}m$ we mean the right-hand side of the respective clause in the realizability interpretation of sets, taking into account that for small propositions membership coincides with the realizability relation.}:\\
\begin{enumerate}
\item $\Set{\tau}$ if $\mathsf{Cond}(\tau)$;
\item $n\,\overline{\varepsilon}\, \tau$ if $\mathsf{Cond}(\tau)\,\wedge\, (n\,\varepsilon\, {\cal J}([\tau]^{-1}))^{+}$;
\item $n\noteps \tau $ if $\mathsf{Cond}(\tau)\,\wedge\, \overline{(n\,\varepsilon\,{\cal J}([\tau]^{-1}))^{+}}$;
\item $n\equiv_{\tau}m$ if $\mathsf{Cond}(\tau)\,\wedge\,  (n\sim_{{\cal I}([\tau]^{-1})}m)^{+}$;
\item $n\not\equiv_{\tau}m$ if $\mathsf{Cond}(\tau)\,\wedge\, \overline{(n\sim_{{\cal I}([\tau]^{-1})}m)^{+}}$;\\
\end{enumerate}
where $\mathsf{Cond}(\tau)$ is\\
\begin{enumerate}
\item $\top$ if $\tau$ has first component $1$ or $6$;
\item $\mathbf{Fam}(b,a)$ if $\tau$ has first component $2$ or $3$;
\item $\mathbf{Set}(a)\,\wedge\, \mathbf{Set}(a')$ if $\tau$ has first component $4$;
\item $\mathbf{Set}(a)$ if $\tau$ has first component $5$;
\item $\mathbf{Set}(c)\,\wedge\, \mathbf{Set}(c')\,\wedge\, \pi_{1}(c)>5\,\wedge\, \pi_{1}(c')>5$ if $\tau$ has first component $7$, $8$ or $9$;
\item $\mathbf{Fam}(d,a)\,\wedge\, \forall x\,(x\noteps a\,\vee\, \pi_{1}(\{d\}(x))>5)$ if $\tau$ has first component $10$, $11$;
\item $\mathbf{Set}(a)\,\wedge\, e\,\overline{\varepsilon}\, a\,\wedge\, f\,\overline{\varepsilon}\,a$ if $\tau$ has first component $12$.
\end{enumerate}

By sake of example we present here the clauses for codes of $\Pi$-sets.

\begin{enumerate}
\item[] $\Set{\pair{2}{\pair{a}{b}}}$ if $\mathbf{Fam}(b,a)$;
\item[] $n\,\overline{\varepsilon}\, \pair{2}{\pair{a}{b}}$ if $$\mathbf{Fam}(b,a)\,\wedge\,\forall x\,(x\noteps a \,\vee\, \{n\}(x)\,\overline{\varepsilon}\, \{b\}(x))\,\wedge\, \forall x \forall y\, (x\,\not\equiv_{a}\,y \,\vee\, \{n\}(x)\,\equiv_{\{b\}(x)}\{n\}(y));$$
\item[] $n\noteps\pair{2}{\pair{a}{b}}$ if $$\mathbf{Fam}(b,a)\,\wedge\,(\exists x\,(x\,\overline{\varepsilon}\, a \,\wedge\, \{n\}(x)\noteps \{b\}(x))\,\vee\,\exists x \exists y\, (x\,\equiv_{a}\,y\,\wedge\, \{n\}(x)\,\not\equiv_{\{b\}(x)}\{n\}(y)));$$
\item[] $n\,\equiv_{\pair{2}{\pair{a}{b}}}m$ if $$\mathbf{Fam}(b,a) \,\wedge\, n\,\overline{\varepsilon}\, \pair{2}{\pair{a}{b}}\,\wedge\, m\,\overline{\varepsilon}\, \pair{2}{\pair{a}{b}}\,\wedge\, \forall x\,(x\noteps a\,\vee\, \{n\}(x)\,\equiv_{\{b\}(x)} \{m\}(x));$$
\item[] $n\,\not\equiv_{\pair{2}{\pair{a}{b}}}m$ if  $$\mathbf{Fam}(b,a) \,\wedge\, (n\noteps \pair{2}{\pair{a}{b}}\,\vee\, m\noteps \pair{2}{\pair{a}{b}}\,\vee\, \exists x\,(x\,\overline{\varepsilon}\, a\,\wedge\, \{n\}(x)\,\not\equiv_{\{b\}(x)} \{m\}(x))$$
\end{enumerate}

The formulas $\mathbf{Set}(n)$, $t\overline{\varepsilon} n$, $t\noteps n$, $t\equiv_{n}s$ and  $t\not\equiv_{n}s$ are components of a predicate $P_{\theta}(n)$ defined in $\tar$ as a fix point of an operator $\theta(n,X)$ defined by glueing together the clauses expressing the code of each \mtts-set-constructor with its interpretation in $\tar$.


\subsubsection*{The interpretation of collections}
Here we extend the realizability relation, membership and equality in definition \ref{inteset} in order to interpret collections, propositions and the decoding operators.

\begin{definition}
$n\,\Vdash\, \phi$ between natural numbers and \mtt-propositions and formulas $n\varepsilon {\cal J}(D)$  and $n\sim_{{\cal I}(D)}m$ for collections $D$
are defined
by including all clauses in definition \ref{inteset} plus the following:
\begin{enumerate}
\item $n\,\Vdash\, \tau(p)$ and $n\,\varepsilon\,{\cal J}(\tau(p))$ are both given by $n\,\overline{\varepsilon}\,{\cal I}(p)$
\item[] $n\sim_{{\cal I}(\tau(p))}m$ is $n\,\varepsilon\,{\cal J}(\tau(p))\wedge m\,\varepsilon\,{\cal J}(\tau(p))$;\\
\item The realizability relation $n\Vdash \eta$ for propositions is completely analogous to the realizability relation for small propositions and the interpretation of propositions is given by the class of realizers equipped with the trivial equivalence relation;\\
\item $\Sigma$-collections are interpreted exactly in the same way as $\Sigma$-sets;\\
\item $n\varepsilon {\cal J}(\mathsf{Set})$ is $\mathbf{Set}(n)\,\wedge\, \forall t\,(t\,\overline{\varepsilon}\, n\leftrightarrow \neg t\,\noteps\, n)\,\wedge\,\forall t\forall s\,(t\,\equiv_{n}s\leftrightarrow \neg t\,\not\equiv_{n}s)$. This is because  
$\noteps$ and $\not\equiv$, which are defined by fix point, don't behave necessarily as negations of $\overline{\varepsilon}$ and $\equiv$ and hence we need to add $\forall t\,(t\,\overline{\varepsilon}\, n\leftrightarrow \neg t\,\noteps \,n)$ and $\forall t\forall s\,(t\,\equiv_{n}\,s\leftrightarrow \neg t\,\not\equiv_{n}\,s)$;
\item[] The interpretation of  $n\sim_{{\cal I}(\mathsf{Set})}m$ is 
$$n\,\varepsilon\,{\cal J}(\mathsf{Set})\,\wedge\,m\,\varepsilon\, {\cal J}(\mathsf{Set})\,\wedge\, \forall t\,(t\,\overline{\varepsilon}\, n \leftrightarrow t\,\overline{\varepsilon}\, m)\,\wedge\, \forall t\forall s\,(t\,\equiv_{n} s\leftrightarrow t\,\equiv_{m} s);$$
\item $n\,\varepsilon\, {\cal J}(\mathsf{prop_s})$ is $n\,\varepsilon\, {\cal J}(\mathsf{Set})\,\wedge\, \pi_{1}(n)>5\,\wedge\, \forall t \forall s\,(t\,\overline{\varepsilon}\, n \,\wedge\, s\,\overline{\varepsilon}\, n\leftrightarrow t\,\equiv_{n}s)$ (recall that small propositions are encoded with $\pi_{1}(n)>5$ and enjoy the proof-irrelevance); 
\item[] The interpretation of  $n\sim_{{\cal I}(\mathsf{prop_s})}m$ is $n\,\varepsilon\,{\cal J}(\mathsf{prop_s})\,\wedge\, m\,\varepsilon\, {\cal J}(\mathsf{prop_s})\,\wedge\, \forall t\,(t\,\overline{\varepsilon}\, n \leftrightarrow t\,\overline{\varepsilon}\, m)$;\\
\item $n\,\varepsilon\, {\cal J}(A\rightarrow \mathsf{prop_s})$ is  
 $\forall t\forall s\,(t\sim_{{\cal I}(A)}s\,\rightarrow\, \{n\}(t)\sim_{{\cal I}(\mathsf{prop_s})}\{n\}(s))$
\item[] and $n\sim_{{\cal I}(A\rightarrow \mathsf{prop_s})}m$ is \\ 
$n\,\varepsilon\, {\cal J}(A\rightarrow \mathsf{prop_s})\,\wedge\, m\,\varepsilon\, {\cal J}(A\rightarrow \mathsf{prop_s})\,\wedge\,\forall t\,(t\,\varepsilon\, {\cal J}(A)\,\rightarrow\, \{n\}(t)\sim_{{\cal I}(\mathsf{prop_s})}\{m\}(t))$
\end{enumerate}
\end{definition}

\subsubsection*{The interpretation of judgements}
We now need to say how judgements are interpreted in our realizability model. First of all, if ${\cal A}=(A,\,\simeq_{{\cal A}})$ and ${\cal B}=(B,\,\simeq_{{\cal B}})$ are definable classes of $\tar$ equipped with a definable equivalence relation, then we denote with ${\cal A}\doteq {\cal B}$ the formula $\forall t \forall s\,(t\simeq_{\cal A} s \,\leftrightarrow\, t\simeq_{\cal B}s).$

The judgements of $\mtts$ are interpreted as follows:\\
\begin{enumerate}
\item if $type\in \{set, col, prop_{s}, prop\}$, the interpretation of $A\; type$ is ${\cal I}(A)\doteq {\cal I}(A)$;
\item if $type\in \{set, col, prop_{s}, prop\}$, the interpretation of $A=B\; type$ is ${\cal I}(A)\doteq {\cal I}(B)$;
\item the judgement $t\in A$ is interpreted as ${\cal I}(t)$;
\item the judgement $t=s\in A$ is interpreted as ${\cal I}(t)\sim_{{\cal I}(A)} {\cal I}(s)$.\\
\item if $type\in \{set, col, prop_{s}, prop\}$, the interpretation of $A\; type\; [x_{1}\in A_{1},...,x_{\mathsf{n}}\in A_{\mathsf{n}}]$ is 
$$\forall x_{1}\forall y_{1}...\forall x_{\mathsf{n}}\forall y_{\mathsf{n}}\,(x_{1}\sim_{{\cal I}(A_{1})}y_{1}\,\wedge...\wedge\, x_{\mathsf{n}}\sim_{ {\cal I}(A_{\mathsf{n}})}y_{\mathsf{n}}\, \rightarrow\, {\cal I}(A)\doteq{\cal I}(A)\,[y_{1}/x_{1},...,y_{\mathsf{n}}/x_{\mathsf{n}}])~\footnote{Note that this definition and the following  exploit the fact that \mtt-variables are interpreted as themselves thought of as \tar-variables.  }$$
\item if $type\in \{set, col, prop_{s}, prop\}$, the interpretation of $A=B\; type\; [x_{1}\in A_{1},...,x_{\mathsf{n}}\in A_{\mathsf{n}}]$ is 
\[\forall x_{1}...\forall x_{\mathsf{n}}\,(x_{1}\,\varepsilon\, {\cal J}(A_{1})\,\wedge... \wedge\, x_{\mathsf{n}}\,\varepsilon \,{\cal J}(A_{\mathsf{n}}) \,\rightarrow\, {\cal I}(A)\doteq{\cal I}(B))\]
\item the judgement $t\in A[x_{1}\in A_{1},...,x_{\mathsf{n}}\in A_{\mathsf{n}}]$ is interpreted as 
\[\forall x_{1}\forall y_{1}...\forall x_{\mathsf{n}}\forall y_{\mathsf{n}}\,(x_{1}\sim_{{\cal I}(A_{1})}y_{1}\,\wedge...\wedge\, x_{\mathsf{n}}\sim_{{\cal I}(A_{\mathsf{n}})}y_{\mathsf{n}}\,\rightarrow\, {\cal I}(t)\sim_{{\cal I}(A)}{\cal I}(t)\,[y_{1}/x_{1},..,y_{\mathsf{n}}/x_{\mathsf{n}}])\]
\item the judgement $t=s\in A[x_{1}\in A_{1},....,x_{\mathsf{n}}\in A_{\mathsf{n}}]$ is interpreted as 
\[\forall x_{1}...\forall x_{\mathsf{n}}\, (x_{1}\,\varepsilon\, {\cal J}(A_{1})\,\wedge...\wedge\, x_{\mathsf{n}}\,\varepsilon\,{\cal J}(A_{\mathsf{n}})\,\rightarrow\, {\cal I}(t)\sim_{{\cal I}(A)} {\cal I}(s))\]
\end{enumerate}

\subsection{The validity theorem}

A judgement $J$ in the language of \mtts\ is validated by the realizability model (${\cal R}\vDash J$) if $\tar\vdash {\cal I}(J)$,
where ${\cal I}(J)$ is the interpretation of $J$ according to the previous section. 
We say that a proposition $\phi$ is validated by the model (${\cal R}\vDash \phi$), if its interpretation can be proven to be inhabited, which means that  
\[\tar\vdash \exists r(r\varepsilon {\cal J}(\phi)) \mbox{  which is equivalent to } \tar\vdash \exists r(r\Vdash \phi).\]

In order to prove how substitution is interpreted in a easy way, it is convenient to  modify the presentation
of \mtts-rules, into an equivalent system (still denoted by \mtts),
where we supply the information that the members in a type equality judgement
are types, and members of term equality judgements are typed terms  as follows
with the warning of avoiding repetitions of same judgements: for $type\in \{set, col, prop_{s}, prop\}$
\\

\begin{tabular}{l}
\normalsize any rule \ \
$\normalsize  \displaystyle{ \frac{\displaystyle J_{1}...J_{n}}
       {\displaystyle A=B\; type\;[\Gamma]}}\  \ $ 
 is changed to \ \ 
$\normalsize \displaystyle{ \frac{\displaystyle J_{1}...J_{n},\ A\; type\;[\Gamma]\ , \ B\; type\; [\Gamma] }{\displaystyle A=B\; type\; [\Gamma]}}
$
\end{tabular}

\begin{tabular}{l}
\normalsize any rule $\normalsize \  
\displaystyle{ \frac{\displaystyle J_{1}...J_{n}}{\displaystyle b\in B\;[\Gamma] }}$ \ \
  is changed to 
$\normalsize \ \ \displaystyle{ \frac
       {\displaystyle J_{1}...J_{n},\  B\; type\;[\Gamma]\  }
       {\displaystyle b\in B\; [\Gamma]}}$
\end{tabular}

\begin{tabular}{l}
\normalsize any rule $\ \normalsize
\displaystyle{ \frac{\displaystyle  J_{1}...J_{n}}{\displaystyle  a=b\in A\;[\Gamma] }}$
 is changed to  \ \
$\normalsize\displaystyle{  \frac{\displaystyle J_{1}...J_{n},\  a\in A\; type\;[\Gamma]\ , \ b\in A\; type\; [\Gamma] }{\displaystyle a=b\in A\; type\; [\Gamma]}}
$
\end{tabular}
\\

\begin{tabular}{l}
\normalsize the substitution rule $\mbox{subT)}$ and $\mbox{sub)}$ in \cite{m09} are changed to\\
$\normalsize
\begin{array}{l}
    \mbox{subT$_m$)}\,\displaystyle{ \frac
       {\displaystyle\begin{array}{l}
C(x_1,\dots,x_n)\ type\ 
 [\, x_1\in A_1,\,  \dots,\,  x_n\in A_n(x_1,\dots,x_{n-1})\, ]   \\[2pt]
a_1\in A_1,\ \dots \ , a_n\in A_n(a_1,\dots,a_{n-1})\qquad
b_1\in A_1,\  \dots \  ,b_n\in A_n(b_1,\dots,b_{n-1})\\[2pt]
a_1=b_1\in A_1\ \dots \ a_n=b_n\in A_n(a_1,\dots,a_{n-1})\end{array}}
         {\displaystyle 
 C(a_1,\dots,a_{n})=C( b_1,\dots, b_n) \ type}}
   \end{array} 
$
\end{tabular}
\\
\\

\begin{tabular}{l}
$\normalsize
\begin{array}{l}
      \mbox{sub$_m$)} \ \
\displaystyle{ \frac
         { \displaystyle 
\begin{array}{l}
 c(x_1,\dots, x_n)\in C(x_1,\dots,x_n)\ \
 [\, x_1\in A_1,\,  \dots,\,  x_n\in A_n(x_1,\dots,x_{n-1})\, ]   \\[2pt]
 C(x_1,\dots,x_n)\ type \
 [\, x_1\in A_1,\,  \dots,\,  x_n\in A_n(x_1,\dots,x_{n-1})\, ]\\[2pt] 
a_1\in A_1,\ \dots \ , a_n\in A_n(a_1,\dots,a_{n-1})\qquad 
b_1\in A_1,\  \dots \  ,b_n\in A_n(b_1,\dots,b_{n-1})\\[2pt]
a_1=b_1\in A_1\ \dots \ a_n=b_n\in A_n(a_1,\dots,a_{n-1})
\end{array}}
         {\displaystyle c(a_1,\dots,a_n)=c(b_1,\dots, b_n)\in
 C(a_1,\dots,a_{n})  }}
\end{array}     
$
\end{tabular}

\begin{tabular}{l}
\normalsize the formation rules $\mbox{F-}\Sigma)$,  $\mbox{F-}\exists )$ and
 \mbox{F-$\forall$)} are changed to
\\
\\
$ \mbox{F-}\Sigma ) \ \
\displaystyle{ \frac{\displaystyle \begin{array}{l} B\ col\\
  C(x)
         \hspace{.1cm} \ col \ [x\in B]\end{array}}
         {\displaystyle \Sigma_{x\in B} C(x)\hspace{.1cm} col }}
\quad
 \mbox{F-}\exists ) \ \
\displaystyle{ \frac
         { \displaystyle\begin{array}{l} B\ col\\ C(x)
         \hspace{.1cm} prop \ \ [x\in B]\ \end{array} }
         {\displaystyle  \exists_{x\in B} C(x)\hspace{.1cm} prop }}\quad
\mbox{F-}\forall) \ \
\displaystyle{ \frac
{\displaystyle \begin{array}{l} B\ col\\ C(x)\hspace{.1cm} prop\ [x\in B]\end{array}  }
{\displaystyle \forall_{x\in B} C(x)\hspace{.1cm} prop }}
$
\end{tabular}
\\

\begin{tabular}{l}
\normalsize  the elimination rules \mbox{  E-$\Pi$)} and \mbox{  E-$\forall$)}
\normalsize are changed to\\
\\
$\normalsize\begin{array}{l}
 \mbox{  E-${\Pi}_m$)}\ \
 \displaystyle{\frac{ \displaystyle
    \begin{array}{l} C(x)\ set\, [x\in B] \ \ \ C(b)\ set\\
b\in B \hspace{.3cm} f\in \Pi_{x\in B} C(x)\end{array} }
 {\displaystyle \mathsf{Ap}(f,b)\in C(b) }}
\qquad
\mbox{E-${\forall}_m$) }\ \
\displaystyle{ \frac
{\displaystyle\begin{array}{l}C(x)\ prop\, [x\in B]\ \ \ C(b)\ prop \\
 b\in B \hspace{.3cm} f\in \forall_{x\in B} C(x)\end{array}}
{\displaystyle \mathsf{Ap}_{\forall}(f,b)\in C(b) }}
\end{array}
$
\end{tabular}
\\

Note that each \mtts-type is a collection and therefore in deriving
a typed term $b\in B$ under a context the addition of the information
that the type $B$ is a collection in the premise is certainly valid.


\begin{theorem}[Validity theorem]
For every judgement $J$ in the language of \mtts\ , if $J$ can be proven in \mtts\ ($\mtts\vdash J$), then $J$ is validated by the model (${\cal R}\vDash J$).
\end{theorem}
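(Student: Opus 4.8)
The plan is to prove the statement by induction on the derivation of $J$ in the reformulated presentation of \mtts\ introduced just above (the one carrying the extra type and typed-term information in equality judgements and using $\mbox{subT$_m$}$, $\mbox{sub$_m$}$, $\mbox{F-$\Sigma_m$}$, $\mbox{E-$\Pi_m$}$, $\mbox{E-$\forall_m$}$): for each rule I will exhibit a $\tar$-derivation of the interpretation of the conclusion from the interpretations of the premises. It is convenient to strengthen the induction hypothesis so that it also records that every interpretation occurring is well defined in $\tar$; this is automatic for judgements in a non-empty context, and for the empty context it follows from the fact that validity of $t\in A$ unfolds to $\mathcal{I}(t)\,\varepsilon\,\mathcal{J}(A)$, hence $\mathcal{I}(t)\!\downarrow$, and from the analogous remarks for the other judgement forms.

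Two auxiliary lemmas are needed first. The \emph{substitution lemma} states that $\tar$ proves $\mathcal{I}(t[s/y])\simeq\mathcal{I}(t)[\mathcal{I}(s)/y]$ for terms $t\cxy$, $s\cx$, and likewise that $\mathcal{J}(A[s/y])$, $\sim_{\mathcal{I}(A[s/y])}$ and $(n\Vdash\phi[s/y])$ are provably equivalent to the formulas obtained by substituting $\mathcal{I}(s)$ for $y$ in $\mathcal{J}(A)$, $\sim_{\mathcal{I}(A)}$ and $(n\Vdash\phi)$; it is proved by induction on the raw syntax of \mtts, using the defining property $\{\Lambda x.t\}(n)\simeq t[n/x]$ of $\Lambda$-terms and the list of $\simeq$-equations recorded right after the interpretation of terms (these also handle the $\beta$-like definitional equalities such as $\mathcal{I}(\mathsf{Ap}(\lambda y.s,t))\simeq\mathcal{I}(s)[\mathcal{I}(t)/y]$). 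The \emph{decoding lemma} states that, by meta-induction on the syntactic form of an \mtts-set $A\cx$ (resp.\ small proposition $\psi\cx$) once its parameters are replaced by realizers and families of the appropriate classes, $\tar$ proves $\mathbf{Set}(\mathcal{I}(\widehat{A}))$, that $\mathcal{I}(\widehat{A})\,\varepsilon\,\mathcal{J}(\mathsf{Set})$, that $\mathbf{Fam}(\Lambda y.\mathcal{I}(\widehat{B}),\mathcal{I}(\widehat{A}))$ holds for dependent families, and that the decoded pair $(\{t\mid t\,\overline{\varepsilon}\,\mathcal{I}(\widehat{A})\},\equiv_{\mathcal{I}(\widehat{A})})$ is $\doteq$ to $\mathcal{I}(A)$, with $\pi_1(\mathcal{I}(\widehat{\psi}))>5$ in the small-proposition case. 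Each step of this meta-induction consists in unfolding one layer of the fix point $P_\theta$ via the fix point axiom and comparing with the clauses of Definition~\ref{inteset}.

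The technically delicate point — and in my view the only genuine obstacle — is the part of the decoding lemma asserting that on codes actually \emph{generated} as codes of sets the primitive negations $\noteps$ and $\not\equiv_n$ coincide with $\neg\,\overline{\varepsilon}$ and $\neg\equiv_n$. The positivity manoeuvre used to turn the family-of-sets formula into an admissible operator $\theta$ replaced the honest complements by separately stipulated fix point predicates, so this coincidence is not free and must be re-proved, by the same meta-induction, for each set constructor; it is exactly what licenses the biconditional side conditions in the clauses defining $\mathcal{J}(\mathsf{Set})$, $\mathcal{J}(\mathsf{prop_s})$ and $\mathcal{J}(A\rightarrow\mathsf{prop_s})$. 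Once it is available, the formation and generation rules for $\mathsf{Set}$ ($\mbox{F-Se}$, $\mbox{Se$_e$}$, $\mbox{Se$_s$}$, $\mbox{Se$_l$}$, $\mbox{Se$_u$}$, $\mbox{Se$_\Sigma$}$, $\mbox{Se$_\Pi$}$, $\mbox{sp-i-p}$), those for $\mathsf{prop_s}$ ($\mbox{Pr$_1$}$--$\mbox{Pr$_7$}$) and for $A\rightarrow\mathsf{prop_s}$, together with the decoding operator $\tau$ and the equalities $\mbox{$\tau$-Pr}$ and $\mbox{eq-Pr$_1$}$--$\mbox{eq-Pr$_7$}$ (which reduce, through the decoding lemma and the clause for $\tau(p)$, to the clauses for $\bot,\wedge,\vee,\rightarrow,\exists,\forall,\mathsf{Id}$), all follow by direct verification against the defining clauses.

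Everything else is routine. The assumption, conversion-of-judgement and weakening rules and the substitution rules $\mbox{subT$_m$}$, $\mbox{sub$_m$}$ follow from the substitution lemma together with the facts recorded in the remark after Definition~\ref{inteset}, namely that $\sim_{\mathcal{I}(A)}$ is a $\tar$-provable equivalence relation on $\mathcal{J}(A)$ and that $n\sim_{\mathcal{I}(A)}m\vdash_{\tar} n\,\varepsilon\,\mathcal{J}(A)\wedge m\,\varepsilon\,\mathcal{J}(A)$. The rules for $\bot,\wedge,\vee,\rightarrow,\exists,\forall,\mathsf{Id}$ are the classical Kleene-realizability verifications applied to the clauses of Definition~\ref{inteset}, with the immediate extra observation that a proposition carries the trivial equivalence relation, so proof-irrelevance and the typings $p=q\in\phi$ are automatic. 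For the set constructors $N_0,N_1,N,List,\Sigma,+,\Pi$ one checks formation (the stated relation is a definable equivalence relation on the stated class, invariant under $\sim$-equivalent substitution of the context variables), introduction (the interpreted constructor lands in the class), elimination (the interpreted eliminator, built from $\rec$, $\lrec$, $\{\mathbf{ite}\}$, $\mathbf{p}$, $\mathbf{cnc}$ or $\Lambda$-application, respects $\varepsilon$ and $\sim$) and conversion (from the $\simeq$-equations following the term interpretation); the $\Pi$-elimination rule uses the congruence conjunct in the definition of $\mathcal{J}((\Pi x\in A)B)$, and the pointwise definition of $\sim_{\mathcal{I}((\Pi x\in A)B)}$ is precisely what makes type-theoretic function equality extensional, so that $\mathbf{extFun}$ comes out valid. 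Finally $\Sigma$-collections are treated exactly as $\Sigma$-sets, and prop-into-set, prop-into-col and set-into-col are immediate since the interpretations being identified are literally the same pair of formulas.
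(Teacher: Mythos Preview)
Your overall architecture differs from the paper's in one point that turns out to be essential. The paper does \emph{not} prove substitution and coding as preliminary lemmas and then validity; it proves three statements \emph{simultaneously} by induction on the derivation: (1) validity, (2) a \emph{semantic} substitution statement, namely $\mathcal I(c)[\mathcal I(a)/x]\sim_{\mathcal I(C[a/x])}\mathcal I(c[a/x])$ and $\mathcal I(C)[\mathcal I(a)/x]\doteq\mathcal I(C[a/x])$, and (3) coding. The reason the paper cannot detach (2) is exactly the step where your proposal breaks.

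Your syntactic substitution lemma $\mathcal I(t[s/y])\simeq\mathcal I(t)[\mathcal I(s)/y]$ fails at $\lambda$-abstraction. For $t=\lambda u.c$ one has $\mathcal I(\lambda u.c)=\Lambda u.\mathcal I(c)$, so the two sides become $\Lambda u.\bigl(\mathcal I(c)[\mathcal I(s)/y]\bigr)$ and $\bigl(\Lambda u.\mathcal I(c)\bigr)[\mathcal I(s)/y]$. The $\beta$-property $\{\Lambda u.r\}(n)\simeq r[n/u]$ that you cite only guarantees that these two numerals are \emph{extensionally} equal (they send each $n$ to $\simeq$-equal values); it does \emph{not} make them the same numeral, and $\simeq$ for defined terms is literal equality. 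Concretely, with the usual $s$-$m$-$n$ or SKI compilation, $(\Lambda u.y)[\mathcal I(s)/y]=\{K\}(\mathcal I(s))$ while $\Lambda u.\mathcal I(s)$ is in general a different code whenever $\mathcal I(s)$ is a compound Kleene term. Hence your induction step for $\lambda$ does not go through, and the type-level version then also fails (e.g.\ for $\mathsf{Id}(A,t,s)$ and $\tau(p)$, whose interpretations contain term interpretations). This is precisely why the paper downgrades the conclusion to a $\sim$-equivalence in the appropriate interpreted type and is then forced to carry validity along: one cannot even state ``$\sim_{\mathcal I(C[a/x])}$'' without knowing that $C[a/x]$ is a valid type.

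Your decoding lemma has the same entanglement. For $(\Pi y\in C)D$, $(\Sigma y\in C)D$, $(\forall y\in C)\psi$, $(\exists y\in C)\psi$ one must establish $\mathbf{Fam}(\Lambda y.\mathcal I(\widehat D),\mathcal I(\widehat C))$, and the $P_1',P_2'$ conjuncts of $\mathbf{Fam}$ require that the decoded class of $D$ is invariant under $\equiv_{\mathcal I(\widehat C)}$ on $y$; via the inductive coding equivalences this is exactly the validity of $D\ set\ [y\in C]$. So the ``meta-induction on the syntactic form of $A$'' cannot be run in isolation either: it needs the validity hypothesis for the subderivations, which is why the paper folds it into the same simultaneous induction.

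In short, the paper's three-in-one induction is not a presentational choice but a necessity; to salvage your decomposition you would have to fix a $\Lambda$ that provably commutes \emph{syntactically} with substitution and argue that separately, and even then the decoding lemma must be run alongside validity rather than before it.
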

\emph{Proof}.\\
In order to prove the validity theorem it is necessary to prove by induction on the height of the proof tree in \mtts\ these three facts at the same time:
\begin{enumerate}
\item for every judgement $J$ in the language of \mtts, if $\mtts\vdash J$ then ${\cal R}\vDash J$;
\item (substitution)
 If $\mtts\vdash C\,type\,[x_{1}\in A_{1},...,x_{\mathsf{n}}\in A_{\mathsf{n}}]$  for $type\in \{set, col, prop_{s}, prop\}$ for all 
\[\mtts\vdash a_{\mathsf{1}}\in A_{1}[y_{1}\in B_{1},...,y_{\mathsf{m}}\in B_{\mathsf{m}}],...,\]
\[\qquad\qquad\qquad\qquad\qquad\qquad\mtts\vdash a_{\mathsf{n}}\in A_{\mathsf{n}}[a_{1}/x_{1},...,a_{\mathsf{n}-1}/x_{\mathsf{n}-1}][y_{1}\in B_{1},...,y_{\mathsf{m}}\in B_{\mathsf{m}}],\] if ${\cal R}\vDash a_{\mathsf{1}}\in A_{1}[y_{1}\in B_{1},...,y_{\mathsf{m}}\in B_{\mathsf{m}}] $,...,
\[\qquad\qquad\qquad{\cal R}\vDash a_{\mathsf{n}}\in A_{\mathsf{n}}[a_{1}/x_{1},...,a_{\mathsf{n}-1}/x_{\mathsf{n}-1}][y_{1}\in B_{1},...,y_{\mathsf{m}}\in B_{\mathsf{m}}],\] then 
\[\tar\,\vdash\, \forall y_{1}...\forall y_{\mathsf{m}}\,(y_{1}\,\varepsilon\, {\cal J}(B_{1})\,\wedge...\wedge\, y_{\mathsf{m}}\,\varepsilon\, {\cal J}(B_{\mathsf{m}})\,\rightarrow\]
\[\qquad\qquad\qquad\qquad\qquad\qquad{\cal I}(C)\,[{\cal I}(a_{1})/x_{1},...,{\cal I}(a_{\mathsf{n}})/x_{\mathsf{n}}]\doteq {\cal I}(C\,[a_{1}/x_{1},...,a_{\mathsf{n}}/x_{\mathsf{n}}])\]
and if $\mtts\vdash c\in C[x_{1}\in A_{1},...,x_{\mathsf{n}}\in A_{\mathsf{n}}]$  for all 
\[\mtts\vdash a_{\mathsf{1}}\in A_{1}[y_{1}\in B_{1},...,y_{\mathsf{m}}\in B_{\mathsf{m}}],...,\]
\[\qquad\qquad\qquad\qquad\qquad\qquad\mtts\vdash a_{\mathsf{n}}\in A_{\mathsf{n}}[a_{1}/x_{1},...,a_{\mathsf{n}-1}/x_{\mathsf{n}-1}][y_{1}\in B_{1},...,y_{\mathsf{m}}\in B_{\mathsf{m}}],\] if ${\cal R}\vDash a_{\mathsf{1}}\in A_{1}[y_{1}\in B_{1},...,y_{\mathsf{m}}\in B_{\mathsf{m}}] $,...,
\[\qquad\qquad\qquad{\cal R}\vDash a_{\mathsf{n}}\in A_{\mathsf{n}}[a_{1}/x_{1},...,a_{\mathsf{n}-1}/x_{\mathsf{n}-1}][y_{1}\in B_{1},...,y_{\mathsf{m}}\in B_{\mathsf{m}}],\] then\[\tar\,\vdash\, \forall y_{1}...\forall y_{\mathsf{m}}\,(y_{1}\,\varepsilon {\cal J}(B_{1})\,\wedge...\wedge\, y_{\mathsf{m}}\,\varepsilon\, {\cal J}(B_{\mathsf{m}})\,\rightarrow\]
\[\;\;\;\;\;\;\;\;\;\;\;\;\;\;\;\;\;\;\;\;\;\;\,\;\;\;\;\;\;\;\;\;\;{\cal I}(c)\,[{\cal I}(a_{1})/x_{1},...,{\cal I}(a_{\mathsf{n}})/x_{\mathsf{n}}]\sim_{{\cal I}(C\,[a_{1}/x_{1},...,a_{\mathsf{n}}/x_{\mathsf{n}}])}{\cal I}(c\,[a_{1}/x_{1},...,a_{\mathsf{n}}/x_{\mathsf{n}}]).\]
\item (coding) If $\mtts\vdash B\,set\,[x_{1}\in A_{1},...,_{\mathsf{n}}\in A_{\mathsf{n}}]$, then 
\[\tar\vdash \forall x_{1}...\forall x_{\mathsf{n}}\,(x_{1}\,\varepsilon\, {\cal J}(A_{1})\,\wedge...\wedge\, x_{\mathsf{n}}\,\varepsilon\, {\cal J}(A_{\mathsf{n}})\,\rightarrow  \,\mathbf{Set}({\cal I}(\hat{B}))\,\wedge\,\]
\[\qquad\qquad\forall t\,(t\,\varepsilon\, {\cal J}(B)\leftrightarrow t\,\overline{\varepsilon}\,{\cal I}(\hat{B}))\,\wedge\, \forall t\, (\neg t\,\varepsilon\, {\cal J}(B)\leftrightarrow t\noteps{\cal I}(\hat{B}))\,\wedge\,\]
\[\qquad\qquad\qquad\forall t \forall s\,(t\sim_{{\cal I}(B)}s\leftrightarrow t\,\equiv_{{\cal I}(\hat{B})}s)\,\wedge\, \forall t\forall s\,(\neg t\sim_{{\cal I}(B)}s\leftrightarrow t\,\not\equiv_{{\cal I}(\hat{B})}s)).\]

\end{enumerate}
We will prove the statements in the case in which $[x_{1}\in A_{1},...,x_{\mathsf{n}}\in A_{\mathsf{n}}]$ is $[x\in A]$ and $[y_{1}\in B_{1},...,y_{m}\in B_{m}]$ is $[\,]$, as the more general case is analogous. The choice of the empty context for the terms which will be used in substitutions doesn't give any loss of generality, as terms are interpreted as terms, variables as itselves and so everything remains true up to universal closure.

%
%
%

\subsubsection*{The empty set}
\subsubsection*{Empty set Formation}As $\tar\vdash \forall x\forall x'(x\sim_{{\cal I}(A)}x'\rightarrow \forall t\forall s(\bot\leftrightarrow \bot))$, then we obtain that ${\cal R}\vDash N_{0}\, set\,[x\in A]$.
\subsubsection*{Empty set elimination} Suppose we derived in $\mtts$ the judgment $\mathsf{emp}_{0}(a)\in A[a/x]$ by elimination after having derived $a\in N_{0}$ and $A\, set\,[x\in N_{0}]$.
By inductive hypothesis on validity $\tar\vdash {\cal I}(a)\varepsilon{\cal J}(N_{0})$, which means that $\tar\vdash \bot$, from which, by \emph{ex falso quodlibet}, one can prove in $\tar$ the interpretation of the judgment $\mathsf{emp}_{0}(a)\in A[a/x]$.
\subsubsection*{Substitution for empty set elimination}
The substitution for elimination is trivial as $\mathsf{emp}_{0}(t)$ is always interpreted as a constant.
\subsubsection*{The singleton set}
\subsubsection*{Singleton formation}
As $\tar\vdash \forall x\forall x'(x\sim_{{\cal I}(A)}x'\rightarrow \forall t\forall s(t=s\wedge t=0\leftrightarrow t=s\wedge t=0))$, we can conclude that  ${\cal R}\vDash N_{1}\, set\,[x\in A]$.
\subsubsection*{Singleton introduction}
Let us prove the validity of the judgment $\star\in N_{1}[x\in A]$. Its interpretation is 
\[\forall x\forall x'(x\sim_{{\cal I}(A)}x'\rightarrow 0\sim_{{\cal I}(N_{1})}0)\]
which is by definition
\[\forall x\forall x'(x\sim_{{\cal I}(A)}x'\rightarrow 0=0)\]
which trivially holds in $\tar$. The substitution holds trivially, because $\star$ does not contain any variable.
\subsubsection*{Substitution for singleton introduction}
Substitution is trivial as ${\cal I}(\star)$ and $\star$ don't contain variables.
\subsubsection*{Singleton elimination} Suppose we derived in $\mtts$ the judgment $El_{N_{1}}(t,c)\in C[t/y][x\in A]$ by elimination after having derived $t\in N_{1}[x\in A]$, $c\in C[\star/y][x\in A]$ and $C\, set\,[x\in A, y\in N_{1}]$. By inductive hypothesis on validity we have that in $\tar$
\[(*)\,\forall x(x\varepsilon {\cal J}(A)\rightarrow {\cal I}(t)=0)\]
Using $(*)$ and the inductive hypothesis on substitution applied to $C$ with respect to \\
$x\in A[x\in A]$ and $t\in N_{1}[x\in A]$ (namely, that $\forall x(x\varepsilon {\cal J}(A)\rightarrow {\cal I}(C[t/y])={\cal I}(C)[{\cal I}(t)/y])$) we can derive in $\tar$ that 
\[(**)\,\forall x(x\varepsilon {\cal J}(A)\rightarrow \forall t\forall s(t\sim_{{\cal I}(C)[0/y]}s\leftrightarrow t\sim_{{\cal I}(C[t/y])}s))\]
and using the inductive hypothesis on substitution applied to $C$ with respect to $x\in A[x\in A]$ and $\star\in N_{1}[x\in A]$, we have that 
\[(***)\,\forall x(x\varepsilon {\cal J}(A)\rightarrow \forall t\forall s(t\sim_{{\cal I}(C)[0/y]}s\leftrightarrow t\sim_{{\cal I}(C[\star/y])}s)).\]
Using the inductive hypothesis on validity for $c$ we obtain that 
\[\forall x\forall x'(x\sim_{{\cal I}(a)}x'\rightarrow {\cal I}(c)\sim_{{\cal I}(C[\star/y])} {\cal I}(c)[x'/x])\]
and using $(**)$ and $(*)$ we obtain that in $\tar$
\[\forall x\forall x'(x\sim_{{\cal I}(a)}x'\rightarrow {\cal I}(c)\sim_{{\cal I}(C[t/y])} {\cal I}(c)[x'/x])\]
which is exactly  
\[\forall x\forall x'(x\sim_{{\cal I}(a)}x'\rightarrow {\cal I}(El_{N_{1}}(t,c))\sim_{{\cal I}(C[\star/y])} {\cal I}(El_{N_{1}}(t,c))[x'/x])\]
So we have that ${\cal R}\vDash El_{N_{1}}(t,c)\in C[t/y][x\in A]$.
\subsubsection*{Substitution for singleton elimination} In addition to the hypothesis of the previous point suppose that $\mtts\vdash a\in A$ and \\$\tar\vdash {\cal I}(a)\varepsilon {\cal J}(A)$.
By inductive hypothesis on substitution for $c$ and $t$ with respect to $a$ we have in $\tar$ that 
\[{(*)\, \cal I}(c)[{\cal I}(a)/x]\sim_{{\cal I}(C[\star/y][a/x])}{\cal I}(c[a/x])\]
\[(**)\,{\cal I}(t)[{\cal I}(a)/x]\sim_{{\cal I}(N_{1})}{\cal I}(t[a/x])\]
In particular from $(**)$ we obtain that ${\cal I}(t[a/x])=0$ in $\tar$. 
Using in $(*)$ the inductive hypothesis on substitution for $C$ with respect to $\star\in N_{1}$ and $a\in A$, and recalling that ${\cal I}(t[a/x])=0$, we have that 
\[{\cal I}(c)[{\cal I}(a)/x]\sim_{{\cal I}(C)[{\cal I}(a)/x,0/y]}{\cal I}(c[a/x]) \]
which is 
\[{\cal I}(c)[{\cal I}(a)/x]\sim_{{\cal I}(C)[{\cal I}(a)/x,{\cal I}(t[a/x])/y]}{\cal I}(c[a/x]) .\]
Now observing that $\mtts\vdash t[a/x]\in N_{1}$, and hence by inductive hypothesis \\${\cal R}\vDash t[a/x]\in N_{1}$, as $\tar\vdash {\cal I}(t[a/x])=0$, we have (using substitution for $C$ with respect to $a\in A$ and $t[a/x]\in N_{1}$) that 
\[{\cal I}(c)[{\cal I}(a)/x]\sim_{{\cal I}(C[a/x,t[a/x]/y])}{\cal I}(c[a/x]) \]
which exactly is 
\[{\cal I}(c)[{\cal I}(a)/x]\sim_{{\cal I}(C[t/y][a/x])}{\cal I}(c[a/x]) \]
and this is 
\[{\cal I}(El_{N_{1}}(t,c))[{\cal I}(a)/x]\sim_{{\cal I}(C[t/y][a/x])} {\cal I}(El_{N_{1}}(t,c)[a/x]).\]
\subsubsection*{Singleton conversion}  Suppose we derived in $\mtts$ the judgment $El_{N_{1}}(\star, c)=c[x\in A]$ by conversion after having derived the judgments $c\in C[\star/y][x\in A]$ and $C\, set\,[x\in A, y\in N_{1}]$. Then by inductive hypothesis on validity we have that in $\tar$ 
\[\forall x(x\varepsilon {\cal J}(A)\rightarrow {\cal I}(c)\sim_{{\cal I}(C[\star/y]) }{\cal I}(c)),\]
by the reflexivity of $\sim_{{\cal I}(A)}$; this is exactly equivalent to 
\[\forall x(x\varepsilon {\cal J}(A)\rightarrow {\cal I}(El_{N_{1}}(\star,c))\sim_{{\cal I}(C[\star/y]) }{\cal I}(c)).\]
So ${\cal R}\vDash El_{N_{1}}(\star, c)=c\in C[\star/y][x\in A]$.
\subsubsection*{The set of natural numbers}

\subsubsection*{Natural numbers formation} Formation is trivial as ${\cal I}(N)$ does not depend on any variable.
\subsubsection*{Natural numbers introduction} Let us check that ${\cal R}\vDash 0\in N[x\in A]$. This is trivial as in $\tar$, we have that 
\[\forall x\forall x'(x\sim_{{\cal I}(A)}x'\rightarrow 0=0).\]
Suppose now we derived in $\mtts$ the judgment $\mathsf{succ}(n)\in N[x\in A]$ by introduction after having derived $n\in N[x\in A]$. By using the inductive hypothesis on validity we deduce similarly that ${\cal R}\vDash \mathsf{succ}(n)\in N[x\in A]$.
\subsubsection*{Substitution for natural numbers introduction} The case of substitution for $0$ is trivial as $0$ does not contain variables. Suppose in addition to the hypothesis in the previous point that $\mtts\vdash a\in A$ and this is valid in  ${\cal R}$. By inductive hypothesis on substitution we have that 
\[\tar\vdash {\cal I}(n)[{\cal I}(a)/x]={\cal I}(n[a/x])\]
from which we derive
\[\tar\vdash succ({\cal I}(n)[{\cal I}(a)/x])=succ({\cal I}(n[a/x]))\]
which is exactly 
\[\tar\vdash {\cal I}(\mathsf{succ}(n))[{\cal I}(a)/x]={\cal I}(\mathsf{succ}(n)[a/x]). \]
\subsubsection*{Natural numbers elimination} Suppose we derived $El_{N}(n,a,(y,z)b)\in B[n/u][x\in A]$ in $\mtts$ by elimination after having derived
\begin{enumerate}
\item $B\, set\,[x\in A, u\in N]$, 
\item $a\in B[0/u][x\in A]$, 
\item $n\in N[x\in A]$,
\item $b\in B[\mathsf{succ}(y)/u][x\in A, y\in N, z\in B[y/u]]$.
\end{enumerate} 
Using the inductive hypotheses we will first prove that in $\tar$
\[\forall x\forall x'(x\sim_{{\cal I}(A)} x'\rightarrow \]
\[\qquad\qquad\qquad\forall u(\{\mathbf{rec}\}({\cal I}(a),\Lambda y.\Lambda z.{\cal I}(b),u)\sim_{{\cal I}(B)}\{\mathbf{rec}\}({\cal I}(a)[x'/x],\Lambda y.\Lambda z. {\cal I}(b)[x'/x],u) )).\]
We suppose $x\sim_{{\cal I}(A)}x'$ and we prove this by induction on $u$.
First of all by inductive hypothesis on validity for $a$ we have that 
\[{\cal I}(a)\sim_{{\cal I}(B[0/u])} {\cal I}(a)[x'/x]\]
which by definition is 
\[\{\mathbf{rec}\}({\cal I}(a),\Lambda y.\Lambda z.{\cal I}(b),0)\sim_{{\cal I}(B[0/u])}\{\mathbf{rec}\}({\cal I}(a)[x'/x],\Lambda y.\Lambda z. {\cal I}(b)[x'/x],0).\]
Using the inductive hypothesis on substitution for $B$ with respect to $x\in A[x\in A]$ and $0\in N[x\in A]$ we obtain that 
\[\{\mathbf{rec}\}({\cal I}(a),\Lambda y.\Lambda z.{\cal I}(b),0)\sim_{{\cal I}(B)[0/u]}\{\mathbf{rec}\}({\cal I}(a)[x'/x],\Lambda y.\Lambda z. {\cal I}(b)[x'/x]).\]
Suppose now that 
\[\{\mathbf{rec}\}({\cal I}(a),\Lambda y.\Lambda z.{\cal I}(b),u)\sim_{{\cal I}(B)}\{\mathbf{rec}\}({\cal I}(a)[x'/x],\Lambda y.\Lambda z. {\cal I}(b)[x'/x],u)\]
By inductive hypothesis on validity for $b$ we have that 
\[{\cal I}(b)[u/y,\{\mathbf{rec}\}({\cal I}(a),\Lambda y.\Lambda z.{\cal I}(b),u)/z]\sim_{{\cal I}(B[\mathsf{succ}(y)/u])[u/y,\{\mathbf{rec}\}({\cal I}(a),\Lambda y.\Lambda z.{\cal I}(b),u)/z]}\]
\[\qquad\qquad\qquad{\cal I}(b)[x'/x,u/y,\{\mathbf{rec}\}({\cal I}(a)[x'/x],\Lambda y.\Lambda z. {\cal I}(b)[x'/x],u)/z].\]

Using the inductive hypothesis on substitution for $B$ with respect to \\
$x\in A[x\in A,y\in N,z\in B[y/u]]$ and $\mathsf{succ}(y)\in N[x\in A,y\in N]$ and the inductive hypothesis on substitution for $B$ with respect to $x\in A[x\in A, y\in N]$ and $y\in N[x\in A, y\in N]$ we obtain that 
\[{\cal I}(b)[u/y,\{\mathbf{rec}\}({\cal I}(a),\Lambda y.\Lambda z.{\cal I}(b),u)/z]\sim_{{\cal I}(B)[succ(u)/u]}\]
\[\qquad\qquad\qquad{\cal I}(b)[x'/x,u/y,\{\mathbf{rec}\}({\cal I}(a)[x'/x],\Lambda y.\Lambda z. {\cal I}(b)[x'/x],u)/z]\]
which is by definition
\[\{\mathbf{rec}\}({\cal I}(a),\Lambda y.\Lambda z.{\cal I}(b),u+1)\sim_{{\cal I}(B)[succ(u)/u]}\{\mathbf{rec}\}({\cal I}(a)[x'/x],\Lambda y.\Lambda z. {\cal I}(b)[x'/x],u+1).\]
So we can conclude that 
\[\forall x\forall x'(x\sim_{{\cal I}(A)} x'\rightarrow \]
\[\qquad\qquad\qquad\forall u(\{\mathbf{rec}\}({\cal I}(a),\Lambda y.\Lambda z.{\cal I}(b),u)\sim_{{\cal I}(B)}\{\mathbf{rec}\}({\cal I}(a)[x'/x],\Lambda y.\Lambda z. {\cal I}(b)[x'/x],u) )).\]
Now considering that by inductive hypothesis on validity we have that 
\[\tar\vdash \forall x\forall x'(x\sim_{{\cal I}(A)}x'\rightarrow {\cal I}(n)={\cal I}(n)[x'/x])\]
we obtain that 
\[\forall x\forall x'(x\sim_{{\cal I}(A)} x'\rightarrow \{\mathbf{rec}\}({\cal I}(a),\Lambda y.\Lambda z.{\cal I}(b),{\cal I}(n))\sim_{{\cal I}(B)[{\cal I}(n)/u]}\]
\[\qquad\qquad\qquad\{\mathbf{rec}\}({\cal I}(a)[x'/x],\Lambda y.\Lambda z. {\cal I}(b)[x'/x],{\cal I}(n)[x'/x]) )\]
which is by definition
\[\forall x\forall x'(x\sim_{{\cal I}(A)} x'\rightarrow {\cal I}(El_{N}(n,a,(y,z)b))\sim_{{\cal I}(B)[{\cal I}(n)/u]}{\cal I}(El_{N}(n,a,(y,z)b))[x'/x]).\]
Now using the inductive hypothesis on substitution for $B$ with respect to $x\in A[x\in A]$ and $n\in N[x\in A]$ we have that 
\[\forall x\forall x'(x\sim_{{\cal I}(A)} x'\rightarrow {\cal I}(El_{N}(n,a,(y,z)b))\sim_{{\cal I}(B[n/u])}{\cal I}(El_{N}(n,a,(y,z)b))[x'/x])\]
which means that ${\cal R}\vDash El_{N}(n,a,(y,z)b)\in B[n/u]$.
\subsubsection*{Substitution for natural numbers elimination} We add to the hypotheses of the previous point, the hypothesis that $\bar{a}\in A$ is provable in $\mtts$ and valid in ${\cal R}$. 
By inductive hypothesis on substitution we have in $\tar$ that ${\cal I}(n)[{\cal I}(\bar{a})/x]={\cal I}(n[\bar{a}/x])$. 
We must prove by induction on $u$ that 
\[\forall u(\{\mathbf{rec}\}({\cal I}(a),\Lambda y.\Lambda z. {\cal I}(b),u)[{\cal I}(\bar{a})/x]\sim_{{\cal I}(B[\bar{a}/x])}\{\mathbf{rec}\}({\cal I}(a[\bar{a}/x]),\Lambda y.\Lambda z. {\cal I}(b[\bar{a}/x]),u) ).\]
We prove this statement in a way similar to that of the previous point, using the inductive hypotheses on substitution. 
From this we derive that 
\[\{\mathbf{rec}\}({\cal I}(a),\Lambda y.\Lambda z. {\cal I}(b),{\cal I}(n))[{\cal I}(\bar{a})/x]\sim_{{\cal I}(B[\bar{a}/x])[{\cal I}(n[\bar{a}/x])
/u]}\]
\[\qquad\qquad\qquad \{\mathbf{rec}\}({\cal I}(a[\bar{a}/x]),\Lambda y.\Lambda z. {\cal I}(b[\bar{a}/x]),{\cal I}(n[\bar{a}/x])) .\]
Using the inductive hypothesis on substitution applied to $\bar{a}\in A[u\in N]$ and $u\in N[u\in N]$ we obtain that 
\[\{\mathbf{rec}\}({\cal I}(a),\Lambda y.\Lambda z. {\cal I}(b),{\cal I}(n))[{\cal I}(\bar{a})/x]\sim_{{\cal I}(B)[{\cal I}(\bar{a})/x,{\cal I}(n[\bar{a}/x])
/u]}\]
\[\qquad\qquad\qquad \{\mathbf{rec}\}({\cal I}(a[\bar{a}/x]),\Lambda y.\Lambda z. {\cal I}(b[\bar{a}/x]),{\cal I}(n[\bar{a}/x])) .\]
So we have in $\tar$, using the inductive hypothesis on substitution for $B$ with respect to $\bar{a}\in A$ and $n[\bar{a}/x]\in N$, that 
\[\{\mathbf{rec}\}({\cal I}(a),\Lambda y.\Lambda z. {\cal I}(b),{\cal I}(n))[{\cal I}(\bar{a})/x]\sim_{{\cal I}(B[\bar{a}/x, n[\bar{a}/x]/y])
/u]} \]
\[\qquad\qquad\qquad\{\mathbf{rec}\}({\cal I}(a[\bar{a}/x]),\Lambda y.\Lambda z. {\cal I}(b[\bar{a}/x]),{\cal I}(n[\bar{a}/x])) .\]
This is equivalent to say that 
\[{\cal I}(El_{N}(n,a,(y,z)b))[{\cal I}(\bar{a})/x]\sim_{{\cal I}(B[n/y][\bar{a}/x])
/u]} {\cal I}(El_{N}(n,a,(y,z)b)[\bar{a}/x]) .\]
\subsubsection*{Natural numbers conversion} For conversion suppose that we derived $El_{N}(0,a,(y,z)b)=a\in B[0/u][x\in A]$ in $\mtts$ by conversion after having derived 
\begin{enumerate}
\item $B\, set\,[x\in A, u\in N]$, 
\item $a\in B[0/u][x\in A]$, 
\item $b\in B[\mathsf{succ}(y)/u][x\in A, y\in N, z\in B[y/u]]$. 
\end{enumerate}
By inductive hypothesis on validity we have that in $\tar$
\[\forall x(x\varepsilon {\cal J}(A)\rightarrow {\cal I}(a)\sim_{{\cal I}(B[0/y])}{\cal I}(a))\]
which by definition is 
\[\forall x(x\varepsilon {\cal J}(A)\rightarrow {\cal I}(El_{N}(0,a,(y,z)b))\sim_{{\cal I}(B[0/y])}{\cal I}(a)).\]
Suppose we derived $El_{N}(\mathsf{succ}(n),a,(y,z)b)\in B[\mathsf{succ}(n)/u][x\in A]$ in $\mtts$ by conversion after having derived the previous judgments and $n\in N[x\in A]$.
By inductive hypothesis on substitution applied to $b$ with respect to $x\in A[x\in A]$, $n\in N[x\in A]$ and \\
$El_{N}(n,a,(y,z)b)\in B[n/u][x\in A]$ we have that in $\tar$
\[\forall x(x\varepsilon {\cal J}(A)\rightarrow {\cal I}(b)[{\cal I}(n)/y, {\cal I}(El_{N}(n,a,(y,z)b))/z]\sim_{{\cal I}(B[\mathsf{succ}(n)/y])} {\cal I}(b[n/y,El_{N}(n,a,(y,z)b)/z]))\]
which is exactly
\[\forall x(x\varepsilon {\cal J}(A)\rightarrow {\cal I}(El_{N}(\mathsf{succ}(n),a,(y,z)b))\sim_{{\cal I}(B[\mathsf{succ}(n)/y])} {\cal I}(b[n/y,El_{N}(n,a,(y,z)b)/z]))\]
\subsubsection*{Dependent products} 
\subsubsection*{Dependent product formation}
Suppose that ${\cal R}\vDash C\, set\,[x\in A,y\in B]$ and ${\cal R}\vDash B\, set\,[x\in A]$, then 
\[\tar\vdash\forall x\forall x'\forall y\forall y'(x\sim_{{\cal I}(A)}x'\wedge y\sim_{{\cal I}(B)}y'\rightarrow \forall t\forall s(t\sim_{{\cal I}(C)}s\leftrightarrow t\sim_{{\cal I}(C)[x'/x,y'/y]}s))\]
\[\tar\vdash\forall x\forall x'(x\sim_{{\cal I}(A)}x'\rightarrow \forall t\forall s(t\sim_{{\cal I}(B)}s\leftrightarrow t\sim_{{\cal I}(B)[x'/x]}s))\]
From $x\sim_{\cal I}(A) x'$ we can also deduce in $\tar$ that
\[\forall y\forall y'(y\sim_{{\cal I}(B)}y'\rightarrow \{t\}(y)\sim_{{\cal I}(C)}\{t\}(y'))\leftrightarrow\forall y\forall y'(y\sim_{{\cal I}(B)[x'/x]}y'\rightarrow \{t\}(y)\sim_{{\cal I}(C)[x'/x]}\{t\}(y'))\]
\[\forall y\forall y'(y\sim_{{\cal I}(B)}y'\rightarrow \{s\}(y)\sim_{{\cal I}(C)}\{s\}(y'))\leftrightarrow\forall y\forall y'(y\sim_{{\cal I}(B)[x'/x]}y'\rightarrow \{s\}(y)\sim_{{\cal I}(C)[x'/x]}\{s\}(y'))\]
\[\forall y(y\varepsilon {\cal J}(B)\rightarrow \{t\}(y)\sim_{{\cal I}(C)}\{s\}(y))\leftrightarrow\forall y(y\varepsilon {\cal J}(B)[x'/x]\rightarrow \{t\}(y)\sim_{{\cal I}(C)[x'/x]}\{s\}(y))\]
which means that $\tar\vdash \forall x\forall x'(x\sim_{{\cal I}(A)}x'\rightarrow \forall t\forall s(t\sim_{{\cal I}((\Pi y\in B)C)}s\leftrightarrow t\sim_{{\cal I}((\Pi y\in B)C)[x'/x]}s))$ that is ${\cal R}\vDash (\Pi y\in B)C\, set\,[x\in A]$. 
\subsubsection*{Substitution for dependent product formation}
In addition to the hypotheses that $\mtts\vdash B\, set\,[x\in A]$ and $\mtts\vdash C\, set\,[x\in A, y\in B]$, suppose that $\mtts\vdash a\in A$ and $\tar\vdash {\cal I}(a)\varepsilon{\cal J}(A)$.  Then by inductive hypothesis on substitution for $B$ with respect to $a$ and for $C$ with respect to $a\in A[y\in B[a/x]]$ and $y\in B[a/x][y\in B[a/x]]$ we have that 
\[\tar\vdash \forall t\forall s(t\sim_{{\cal I}(B)[{\cal I}(a)/x]}s\leftrightarrow t\sim_{{\cal I}(B[a/x])}s)\]
and
\[\tar\vdash \forall y(y\varepsilon {\cal J}(B[a/x])\rightarrow  \forall t\forall s(t\sim_{{\cal I}(C)[{\cal I}(a)/x]}s\leftrightarrow t\sim_{{\cal I}(C[a/x])}s).\]
We can deduce in $\tar$ that for every $t$ and $s$
\[t\sim_{{\cal I}(\Pi y\in B)C[{\cal I}(a)/x]}s\] 
is equivalent to 
\[\forall y\forall y'(y\sim_{{\cal I}(B)[{\cal I}(a)/x]}y'\rightarrow \{t\}(y)\sim_{{\cal I}(C)[{\cal I}(a)/x]} \{t\}(y'))\wedge\]
\[\forall y\forall y'(y\sim_{{\cal I}(B)[{\cal I}(a)/x]}y'\rightarrow \{s\}(y)\sim_{{\cal I}(C)[{\cal I}(a)/x]} \{s\}(y'))\wedge\]
\[\forall y(y\varepsilon {\cal J}(B)[{\cal I}(a)/x]\rightarrow \{t\}(y)\sim_{{\cal I}(C)[{\cal I}(a)/x]}\{s\}(y))\]
which is equivalent to 
\[\forall y\forall y'(y\sim_{{\cal I}(B[a/x])}y'\rightarrow \{t\}(y)\sim_{{\cal I}(C[a/x])} \{t\}(y'))\wedge\]
\[\forall y\forall y'(y\sim_{{\cal I}(B[a/x])}y'\rightarrow \{s\}(y)\sim_{{\cal I}(C[a/x])} \{s\}(y'))\wedge\]
\[\forall y(y\varepsilon {\cal J}(B[a/x])\rightarrow \{t\}(y)\sim_{{\cal I}(C[a/x])}\{s\}(y))\]
which is exactly $t\sim_{{\cal I}((\Pi y\in B)C)[a/x]}s$.
\subsubsection*{Dependent product introduction} Suppose we obtained $\mtts\vdash (\lambda y)c\in (\Pi y\in B)C$ by introduction after having proved \\$\mtts\vdash c\in C[x\in A, y\in B]$, $\mtts\vdash C\,set\,[x\in A, y\in B]$ and\\ $\mtts\vdash B\,set\,[x\in A]$.
By inductive hypothesis on validity we have that
\[\forall x\forall x'\forall y\forall y'(x\sim_{{\cal I}(A)}x'\wedge y\sim_{{\cal I}(B)}y'\rightarrow {\cal I}(c)\sim_{{\cal I}(C)}{\cal I}(c)[x'/x,y'/y])\]
which is equivalent to 
\[\forall x\forall x'(x\sim_{{\cal I}(A)}x'\rightarrow\forall y\forall y' (y\sim_{{\cal I}(B)}y'\rightarrow \{\Lambda y.{\cal I}(c)\}(y)\sim_{{\cal I}(C)}\{\Lambda y.{\cal I}(c)[x'/x]\}(y')))\]
which is equivalent to 
\[\forall x\forall x'(x\sim_{{\cal I}(A)}x'\rightarrow\forall y\forall y' (y\sim_{{\cal I}(B)}y'\rightarrow \{{\cal I}((\lambda y)c)\}(y)\sim_{{\cal I}(C)}\{{\cal I}((\lambda y)c)[x'/x]\}(y')))\]
From this it follows that \begin{enumerate}
\item $x\sim_{{\cal I}(A)}x$ implies that $\forall y\forall y' (y\sim_{{\cal I}(B)}y'\rightarrow \{{\cal I}((\lambda y)c)\}(y)\sim_{{\cal I}(C)}\{{\cal I}(\lambda y.c)\}(y'))$, which means that ${\cal I}((\lambda y)c)\varepsilon {\cal I}((\Pi y\in B)C)$;
\item $x'\sim_{{\cal I}(A)}x'$ implies that 
\[\forall y\forall y' (y\sim_{{\cal I}(B)}y'\rightarrow \{{\cal I}((\lambda y)c)[x'/x]\}(y)\sim_{{\cal I}(C)[x'/x]}\{{\cal I}((\lambda y)c[x'/x]\}(y')),\] which means that 
${\cal I}((\lambda y)c)[x'/x]\varepsilon {\cal I}((\Pi y\in B)C)$;
\item using the fact that $y\varepsilon {\cal J}(B)$ entails $y\sim_{{\cal I}(B)}y$ we have that \[\forall x\forall x'(x\sim_{{\cal I}(A)}x'\rightarrow\forall y(y\varepsilon{\cal J}(B)\rightarrow \{{\cal I}((\lambda y)c)\}(y)\sim_{{\cal I}(C)}\{{\cal I}((\lambda y)c)[x'/x]\}(y)).\]
This gives us that ${\cal R}\vDash (\lambda y)c\in (\Pi y\in B)C[x\in A]$.
\end{enumerate}
\subsubsection*{Substitution for dependent product introduction}For substitution in addition to the hypotheses of the previous point, suppose that 
\begin{enumerate}
\item $\mtts\vdash a\in A$ and 
\item $\tar\vdash {\cal I}(a)\varepsilon {\cal J}(A)$. 
\end{enumerate}
From this by inductive hypothesis on substitution with respect to $a\in A\,[y\in B[a/x]]$ and \\$y\in B[a/x]\,[y\in B[a/x]]$ we obtain that 
\[\forall y(y\varepsilon {\cal J}(B[a/x])\rightarrow {\cal I}(c)[{\cal I}(a)/x]\sim_{{\cal I}(C[a/x])} {\cal I}(c[a/x]))\]
which entails that 
\[\forall y(y\varepsilon {\cal J}(B[a/x])\rightarrow \{{\cal I}(\lambda y.c)[{\cal I}(a)/x]\}(y)\sim_{{\cal I}(C[a/x]))} \{{\cal I}((\lambda y)c[a/x])\}(y))\]
Moreover by the inductive hypothesis on validity for $c$ and the validity hypothesis for $a\varepsilon A$, we obtain that 
\[\forall y\forall y'(y\sim_{{\cal I}(B)[{\cal I}(a)/x]}y'\rightarrow \{{\cal I}(\lambda y.c)[{\cal I}(a)/x]\}(y)\sim_{{\cal I}(C)[{\cal I}(a)/x]} \{{\cal I}(\lambda y.c)[{\cal I}(a)/x]\}(y'))\]
and using the substitution hypothesis for $B$ and $C$ we obtain that 
\[\forall y\forall y'(y\sim_{{\cal I}(B[a/x])}y'\rightarrow \{{\cal I}(\lambda y.c)[{\cal I}(a)/x]\}(y)\sim_{{\cal I}(C[a/x])} \{{\cal I}(\lambda y.c)[{\cal I}(a)/x]\}(y')).\]
By using the inductive hypothesis on substitution for $c$, for $C$, the validity hypothesis on $c\in C[x\in A, y\in B]$ and the previous one we obtain also that 
\[\forall y\forall y'(y\sim_{{\cal I}(B[a/x])}y'\rightarrow \{{\cal I}(\lambda y.c[a/x])\}(y)\sim_{{\cal I}(C[a/x])} \{{\cal I}(\lambda y.c[a/x])\}(y')).\]
This entails that 
\[\tar\vdash {\cal I}((\lambda y)c)[{\cal I}(a)/x]\sim_{{\cal I}((\Pi y\in B)C[a/x])} {\cal I}((\lambda y)c[a/x]).\]

\subsubsection*{Dependent product elimination} Suppose we derived in $\mtts$, the judgment $\mathsf{Ap}(f,b)\in C[b/y][x\in A]$ by elimination after having derived in $\mtts$ the judgments $b\in B[x\in A]$ and $f\in (\Pi y\in B)C[x\in A]$. By inductive hypothesis on validity we have that in $\tar$
\[\forall x\forall x'(x\sim_{{\cal I}(A)}x'\rightarrow {\cal I}(b)\sim_{{\cal I}(B)} {\cal I}(b)[x'/x])\]
\[\forall x\forall x'(x\sim_{{\cal I}(A)}x'\rightarrow \]
\[\qquad\forall y\forall y'(y\sim_{{\cal I}(B)}y'\rightarrow \{{\cal I}(f)\}(y)\sim_{{\cal I}(C)} \{{\cal I}(f)\}(y'))\wedge \]
\[\qquad\qquad\forall y\forall y'(y\sim_{{\cal I}(B)}y'\rightarrow \{{\cal I}(f)[x'/x]\}(y)\sim_{{\cal I}(C)} \{{\cal I}(f)[x'/x]\}(y'))\wedge\]
\[\qquad\qquad\qquad\forall y(y\varepsilon {\cal J}(B)\rightarrow \{{\cal I}(f)\}(y)\sim_{{\cal I}(C)} \{{\cal I}(f)[x'/x]\}(y) )).\]
From these we can easily deduce that in $\tar$
\[\forall x\forall x'(x\sim_{{\cal I}(A)}x'\rightarrow \{{\cal I}(f)\}({\cal I}(b))\sim_{{\cal I}(B)}\{{\cal I}(f)[x'/x]\}({\cal I}(b)[x'/x])).\]
This is exactly  \[\forall x\forall x'(x\sim_{{\cal I}(A)}x'\rightarrow {\cal I}(\mathsf{Ap}(f,b))\sim_{{\cal I}(C)[{\cal I}(b)/y]}{\cal I}(\mathsf{Ap}(f,b))[x'/x] ).\]
Now, applying the inductive hypothesis on substitution for $C$ with respect to $b\in B[x\in A]$ and $x\in A[x\in A]$, we obtain that 
\[\forall x\forall x'(x\sim_{{\cal I}(A)}x'\rightarrow {\cal I}(\mathsf{Ap}(f,b))\sim_{{\cal I}(C[b/y])}{\cal I}(\mathsf{Ap}(f,b))[x'/x] ).\]
So in particular we have that 
\[{\cal R}\vDash \mathsf{Ap}(f,b)\in C[b/y][x\in A].\]
\subsubsection*{Substitution for dependent product elimination} In addition to the hypotheses of the previous point, we add the hypotheses that \\
$\mtt\vdash a\in A$ and $\tar\vdash {\cal I}(a)\varepsilon {\cal J}(A)$.
By inductive hypothesis on substitution we have in $\tar$ that 
\[{\cal I}(b)[{\cal I}(a)/x]\sim_{{\cal I}(B[a/x])} {\cal I}(b[a/x])\]
\[\qquad\forall y\forall y'(y\sim_{{\cal I}(B[a/x])}y'\rightarrow \{{\cal I}(f)[{\cal I}(a)/x]\}(y)\sim_{{\cal I}(C[a/x])} \{{\cal I}(f)[{\cal I}(a)/x]\}(y'))\wedge \]
\[\qquad\qquad\forall y\forall y'(y\sim_{{\cal I}(B[a/x])}y'\rightarrow \{{\cal I}(f[a/x])\}(y)\sim_{{\cal I}(C[a/x])} \{{\cal I}(f[a/x])\}(y'))\wedge\]
\[\qquad\qquad\qquad\forall y(y\varepsilon {\cal J}(B[a/x])\rightarrow \{{\cal I}(f)[{\cal I}(a)/x]\}(y)\sim_{{\cal I}(C[a/x])} \{{\cal I}(f[a/x])\}(y) )).\]
From this we can deduce in $\tar$ that 
\[\{{\cal I}(f)[{\cal I}(a)/x]\}({\cal I}(b)[{\cal I}(a)/x])\sim_{{\cal I}(C[a/x])} \{{\cal I}(f[a/x])\}({\cal I}(b[a/x]))\]
which is 
\[{\cal I}(\mathsf{Ap}(f,b))[{\cal I}(a)/x]\sim_{{\cal I}(C[a/x])} {\cal I}(\mathsf{Ap}(f,b)[a/x])\]

\subsubsection*{Dependent product conversion}
Suppose we derived in $\mtts$ the judgment $\mathsf{Ap}(\lambda y. c,b)=c[b/y][x\in A]$ by  conversion after having derived $\mtts\vdash c\in C[x\in A, y\in B]$ and $\mtts\vdash b\in B[x\in A]$. By inductive hypothesis on validity we can suppose that these two judgments are validated by ${\cal R}$ and we can use the inductive hypothesis on substitution applied to $c$ with respect to $x\in A[x\in A]$ and $b\in B[x\in A]$, obtaining that in $\tar$
\[\forall x(x\varepsilon {\cal J}(A)\rightarrow {\cal I}(c)[{\cal I}(b)/y]\sim_{{\cal I}(C[b/y])} {\cal I}(c[b/y]))\]
which is exactly 
\[\forall x(x\varepsilon {\cal J}(A)\rightarrow {\cal I}(\mathsf{Ap}(\lambda y.c,b))\sim_{{\cal I}(C[b/y])} {\cal I}(c[b/y])).\]
So we have that ${\cal R}\vDash \mathsf{Ap}(\lambda y. c,b)=c[b/y][x\in A]$.

\subsubsection*{Dependent sum sets}
\subsubsection*{Dependent sum formation} Suppose that ${\cal R}\vDash C\, set\,[x\in A,y\in B]$ and ${\cal R}\vDash B\, set\,[x\in A]$, then 
\[\tar\vdash\forall x\forall x'\forall y\forall y'(x\sim_{{\cal I}(A)}x'\wedge y\sim_{{\cal I}(B)}y'\rightarrow \forall t\forall s(t\sim_{{\cal I}(C)}s\leftrightarrow t\sim_{{\cal I}(C)[x'/x,y'/y]}s))\]
\[\tar\vdash\forall x\forall x'(x\sim_{{\cal I}(A)}x'\rightarrow \forall t\forall s(t\sim_{{\cal I}(B)}s\leftrightarrow t\sim_{{\cal I}(B)[x'/x]}s))\]
From $x\sim_{{\cal I}(A)}x'$, we can also deduce in $\tar$ that
\[ p_{1}(t)\sim_{{\cal I}(B)} p_{1}(s)\wedge \forall y(y\sim_{{\cal I}(B)} p_{1}(t)\rightarrow  p_{2}(t)\sim_{{\cal I}(C)} p_{2}(s))\leftrightarrow\]
\[\qquad\qquad\qquad  p_{1}(t)\sim_{{\cal I}(B)[x'/x]} p_{1}(s)\wedge \forall y(y\sim_{{\cal I}(B)[x'/x]} p_{1}(t)\rightarrow  p_{2}(t)\sim_{{\cal I}(C)[x'/x]} p_{2}(s)) \] 
which means that 
\[\tar\vdash \forall x\forall x'(x\sim_{{\cal I}(A)}x'\rightarrow \forall t\forall s(t\sim_{{\cal I}((\Sigma y\in B)C)}s\leftrightarrow t\sim_{{\cal I}((\Sigma y\in B)C)[x'/x]}s))\]
 that is ${\cal R}\vDash (\Sigma y\in B)C\, set\,[x\in A]$. 

\subsubsection*{Substitution for dependent sum formation} In addition the hypotheses that $\mtts\vdash B\, set\,[x\in A]$ and $\mtts\vdash C\, set\,[x\in A, y\in B]$, suppose that $\mtts\vdash a\in A$ and $\tar\vdash {\cal I}(a)\varepsilon{\cal J}(A)$. Then by inductive hypothesis on substitution for $B$ with respect to $a$ and for $C$ with respect to $a\in A[y\in B[a/x]]$ and $y\in B[a/x][y\in B[a/x]]$ we have that 
\[\tar\vdash \forall t\forall s(t\sim_{{\cal I}(B)[{\cal I}(a)/x]}s\leftrightarrow t\sim_{{\cal I}(B[a/x])}s)\]
and
\[\tar\vdash \forall y(y\varepsilon {\cal J}(B[a/x])\rightarrow  \forall t\forall s(t\sim_{{\cal I}(C)[{\cal I}(a)/x]}s\leftrightarrow t\sim_{{\cal I}(C[a/x])}s).\]
From these we can immediately deduce that 
\[\tar\vdash {\cal I}((\Sigma y\in B)C)[{\cal I}(a)/x]\doteq {\cal I}((\Sigma y\in B)C[a/x]),\] as $\sim_{{\cal I}(\Sigma y\in B)C}$ only depends on ${\cal I}(B)$ and ${\cal I}(C)$.

\subsubsection*{Dependent sum introduction}
Suppose we derived in $\mtts$, $\langle b,c\rangle \in (\Sigma y\in B)C[x\in A]$ by introduction after having derived in $\mtts$, $b\in B[x\in A]$, $c\in C[b/y][x\in A]$ and $C\,col[x\in A,y\in B]$. Suppose $x\sim_{{\cal I}(A)}x'$. By inductive hypothesis on validity applied to $b\in B[x\in A]$, we obtain in $\tar$ that ${\cal I}(b)\sim_{{\cal I}(B)}{\cal I}(b)[x'/x]$ (and so also ${\cal I}(b)\varepsilon {\cal J}(B)$ and ${\cal I}(b)[x'/x]\in {\cal J}(B)$). By inductive hypothesis on the validity of \\$C\,col[x\in A,y\in B]$, if we have that $y\sim_{{\cal I}(B)}{\cal I}(b)$, we obtain that ${\cal I}(C)\doteq {\cal I}(C)[{\cal I}(b)/y]$. By inductive hypothesis on substitution applied to $C\,col[x\in A,y\in B]$, $b\in B[x\in A]$ and $x\in A[x\in A]$, we have that 
${\cal I}(C)[{\cal I}(b)/y]\doteq {\cal I}(C[b/y])$. Now by inductive hypothesis on validity on $c\in C[b/y][x\in A]$ we have ${\cal I}(c)\sim_{{\cal I}(C[b/y])} {\cal I}(c)[x'/x]$. From this and the previous remarks we derive that ${\cal I}(c)\sim_{{\cal I}(C)} {\cal I}(c)[x'/x]$ (and so moreover ${\cal I}(c)\varepsilon {\cal J}(C)$ and ${\cal I}(c)[x'/x]\varepsilon {\cal J}(C)$).  

Moreover, by what we said before, $y\sim_{{\cal I}(B)}{\cal I}(b)$ is equivalent to $y\sim_{{\cal I}(B)}{\cal I}(b)[x'/x]$.

This means that
\[\tar\vdash \forall x\forall x'(x\sim_{{\cal I}(A)}x'\rightarrow\]
\[\qquad\qquad\qquad {\cal I}(b)\sim_{{\cal I}(B)}{\cal I}(b)[x'/x]\wedge \forall y(y\sim_{{\cal I}(B)}{\cal I}(b)\rightarrow {\cal I}(c)\sim_{{\cal I}(C)}{\cal I}(c)[x'/x])).\]
This exactly means that ${\cal R}\vDash \langle b,c\rangle\in (\Sigma y\in B)C[x\in A]$.

\subsubsection*{Substitution for dependent sum introduction}
For substitution, under the same hypothesis, suppose that 
\begin{enumerate}
\item $\mtts\vdash a\in A$ and 
\item $\tar\vdash {\cal I}(a)\varepsilon {\cal J}(A)$.
\end{enumerate}
By the rules of $\mtts$, we know also that $\mtts\vdash C[b/y]\,col\,[x\in A]$ and \\
$\mtts \vdash B\,col\,[x\in A]$.
By inductive hypothesis on substitution of $a$ in $b$, we have that \[{\cal I}(b)[{\cal I}(a)/x]\sim_{{\cal I}(B[a/x])} {\cal I}(b[a/x]).\] 

By inductive hypothesis on substitutions of $a$ in $C[b/y]$, of $b[x\in A]$ and $x\in A[x\in A]$ in $C$, of $a\in A[y\in B[a/x]]$ and $y\in B[a/x][y\in B[a/x]]$ in $C$ we obtain that 
\[(*)\,{\cal I}(C[b/y][a/x])\doteq {\cal I}(C[b/y])[{\cal I}(a)/x]\doteq {\cal I}(C)[{\cal I}(b)/y][{\cal I}(a)/x]\doteq\]
\[\qquad\qquad\qquad {\cal I}(C)[{\cal I}(a)/x][{\cal I}(b)[{\cal I}(a)/x]/y]\doteq {\cal I}(C[a/x])[{\cal I}(b)[{\cal I}(a)/x]/y].\]
Suppose that $y\sim_{{\cal I}(B[a/x])}{\cal I}(b)[{\cal I}(a)/x]$. By inductive hypothesis on substitution for $c$,  
\[{\cal I}(c)[{\cal I}(a)/x]\sim_{{\cal I}(C[b/y][a/x])}{\cal I}(c[a/x]).\] By the inductive hypothesis on substitution 
\[{\cal I}(B)[{\cal I}(a)/x]\doteq {\cal I}(B[a/x])\] (and so $y\sim_{{\cal I}(B)[{\cal I}(a)/x]}{\cal I}(b)[{\cal I}(a)/x]$), and using the inductive hypothesis on validity for $C\,set$ we obtain that ${\cal I}(C)[{\cal I}(a)/x, {\cal I}(b)[{\cal I}(a)/x]/y]\doteq {\cal I}(C)[{\cal I}(a)/x]$ and so using $(*)$ 
we derive that 
\[{\cal I}(c)[{\cal I}(a)/x]\sim_{{\cal I}(C)[a/x])}{\cal I}(c[a/x])\]
after having noticed that ${\cal I}(C[a/x])\doteq {\cal I}(C)[{\cal I}(a)/x]$ by using the inductive hypothesis on substitution for $C$ with respect to $a\in A[y\in B[a/x]]$ and $y\in B[a/x][y\in B[a/x]]$.
So we obtained what we needed.
\subsubsection*{Dependent sum elimination}
Suppose we deduced $El_{\Sigma}(d, (y,z)e)\in E[d/u][x\in A]$ in $\mtts$, after having derived 
\begin{enumerate}
\item $E\, set\,[x\in A,u\in (\Sigma y\in B)C]$, 
\item $d\in (\Sigma y\in B)C[x\in A]$, 
\item $e\in E[\langle y,z\rangle][x\in A,y\in B,z\in C]$ and so also 
\item $B\, set\,[x\in A]$, 
\item $C\, set\,[x\in A, y\in B]$ by the structure of the rules of $\mtts$.
\end{enumerate}
 By inductive hypothesis on validity we can suppose that all these judgments are valid in ${\cal R}$. In particular we have that ${\cal R}\vDash d\in(\Sigma y\in B)C[x\in A]$. This in particular implies that in $\tar$
\[(*)\,\forall x\forall x'(x\sim_{{\cal I}(A)}x'\rightarrow \]
\[\qquad\qquad\qquad( p_{1}({\cal I}(d))\sim_{{\cal I}(B)}  p_{1}({\cal I}(d)[x'/x])\wedge  p_{2}({\cal I}(d))\sim_{{\cal I}(C)[ p_{1}({\cal I}(d))/y]}  p_{2}({\cal I}(d)[x'/x]) ) ).\]
Moreover we have that ${\cal R}\vDash e\in E[\langle y,z\rangle][x\in A,y\in B,z\in C] $. This in particular implies that in $\tar$
\[(**)\,\forall x\forall x'\forall y\forall y'\forall z\forall z'(x\sim_{{\cal I}(A)}x'\wedge y\sim_{{\cal I}(B)}y'\wedge z\sim_{{\cal I}(C)}z'\rightarrow\]
\[\qquad\qquad\qquad\qquad\qquad\qquad {\cal I}(e)\sim_{{\cal I}(E[\langle y,z\rangle/u])} {\cal I}(e)[x'/x,y'/y,z'/z]).\]
Putting together $(*)$ and $(**)$ we obtain that 
\[\forall x\forall x'(x\sim_{{\cal I}(A)}x'\rightarrow \]
\[\qquad\qquad\qquad{\cal I}(e)[ p_{1}({\cal I}(d))/y,  p_{2}({\cal I}(d))/z]\sim_{{\cal I}(E[\langle y,z\rangle/u])[ p_{1}({\cal I}(d))/y,  p_{2}({\cal I}(d))/z]}\]
\[\qquad\qquad\qquad\qquad\qquad\qquad{\cal I}(e)[x'/x, p_{1}({\cal I}(d)[x'/x])/y, p_{2}({\cal I}(d)[x'/x])/z])\]
and this exactly means that 
\[(***)\,\forall x\forall x'(x\sim_{{\cal I}(A)}x'\rightarrow \]
\[{\cal I}(El_{\Sigma}(d,(y,z)e))\sim_{{\cal I}(E[\langle y,z\rangle/u])[ p_{1}({\cal I}(d))/y,  p_{2}({\cal I}(d))/z]} {\cal I}(El_{\Sigma}(d,(y,z)e))[x'/x]).\]
Now it is immediate to see that 
\[\mtts\vdash \langle y,z\rangle \in (\Sigma y\in B)C[x\in A,y\in B, z\in C]\]
\[\mtts\vdash x\in A[x\in A,y\in B, z\in C]\]
and these judgments are validated by ${\cal R}$, as $C\, set\,[x\in A, y\in B]$ is valid in ${\cal R}$.
In particular, by the inductive hypothesis on substitution applied to $E$ we have that 
\[\forall x\forall y\forall z(x\in {\cal J}(A)\wedge y\in {\cal J}(B)\wedge z\in {\cal J}(C)\rightarrow {\cal I}(E)[\langle y,z\rangle/u]\doteq{\cal I}(E[\langle y,z\rangle/u])).\]
Using $(*)$ we immediately obtain, from the previous, that 
\[\forall x(x\in {\cal J}(A)\rightarrow {\cal I}(E)[{\cal I}(d)/u]\doteq{\cal I}(E[\langle y,z\rangle/u])[ p_{1}({\cal I}(d))/y, p_{2}({\cal I}(d))/z]).\]
Moreover as we have that 
\[\mtts\vdash d\in (\Sigma y\in B)C[x\in A]\]
\[\mtts\vdash x\in A[x\in A]\]
and both these judgments are valid in ${\cal R}$ (the first by inductive hypothesis), 
we also have that 
\[\forall x(x\in {\cal J}(A)\rightarrow{\cal I}(E)[{\cal I}(d)/u]\doteq {\cal I}(E[d/u]))\]
which combined with what we proved before gives us that 
\[\forall x(x\in {\cal J}(A)\rightarrow {\cal I}(E[d/u])\doteq{\cal I}(E[\langle y,z\rangle/u])[ p_{1}({\cal I}(d))/y, p_{2}({\cal I}(d))/z]).\]
Recalling $(***)$ we can conclude that in $\tar$
\[\forall x\forall x'(x\sim_{{\cal I}(A)}x'\rightarrow {\cal I}(El_{\Sigma}(d,(y,z)e))\sim_{{\cal I}(E[d/u])} {\cal I}(El_{\Sigma}(d,(y,z)e))[x'/x])\]
which exactly means that 
\[{\cal R}\vDash El_{\Sigma}(d,(y,z)e)\in E[d/u][x\in A].\]

\subsubsection*{Substitution for dependent sum elimination}
Under the same hypothesis as in the previous point suppose we have $\mtts\vdash a\in A$ and \\$\tar\vdash {\cal I}(a)\in {\cal J}(A)$.
First of all by inductive hypothesis on substitution applied to $d$ we can derive that in $\mtts$
\[(i)\, p_{1}({\cal I}(d)[{\cal I}(a)/x])\sim_{{\cal I}(B[a/x])} p_{1}({\cal I}(d[a/x]))\wedge\]
\[\qquad\qquad\qquad p_{2}({\cal I}(d)[{\cal I}(a)/x])\sim_{{\cal I}(C[a/x])[p_{1}({\cal I}(d)[{\cal I}(a)/x])/y]} p_{2}({\cal I}(d[a/x])).\]
Using inductive hypothesis on substitution for $B$ and $C$ this is equivalent to 
\[p_{1}({\cal I}(d)[{\cal I}(a)/x])\sim_{{\cal I}(B)[{\cal I}(a)/x]} p_{1}({\cal I}(d[a/x]))\wedge \]
\[\qquad\qquad\qquad p_{2}({\cal I}(d)[{\cal I}(a)/x])\sim_{{\cal I}(C)[{\cal I}(a)/x, p_{1}({\cal I}(d)[{\cal I}(a)/x])/y]}  p_{2}({\cal I}(d[a/x])). \]
Using $(**)$ from the previous section we obtain that 
\[(ii)\,{\cal I}(e)[{\cal I}(a)/x,  p_{1}({\cal I}(d)[{\cal I}(a)/x])/y,  p_{2}({\cal I}(d)[{\cal I}(a)/x])/z]\sim_{{\cal I}(E[\langle y,z\rangle ])[{\cal I}(a)/x,  p_{1}({\cal I}(d)[{\cal I}(a)/x])/y,  p_{2}({\cal I}(d)[{\cal I}(a)/x])/z]}\]
\[\qquad\qquad\quad {\cal I}(e)[{\cal I}(a)/x,  p_{1}({\cal I}(d[a/x]))/y,  p_{2}({\cal I}(d[a/x]))/z].\]
Now we can easily see that the following judgments are derivable in $\mtts$ and valid in ${\cal R}$:
\[a\in A[y\in B[a/x], z\in C[a/x] ]\]
\[y\in B[a/x][y\in B[a/x],z\in C[a/x]]\]
\[z\in C[a/x][y\in B[a/x], z\in C[a/x]],\]
so by inductive hypothesis on substitution applied to $e$ we have that 
\[\forall y\forall z(y\varepsilon {\cal J}(B[a/x])\wedge z\varepsilon {\cal J}(C[a/x])\rightarrow {\cal I}(e)[{\cal I}(a)/x]\sim_{{\cal I}(E[\langle y,z\rangle/u][a/x])} {\cal I}(e[a/x])).\]
By using $(i)$ we obtain that in $\tar$
\[(iii)\,{\cal I}(e)[{\cal I}(a)/x][  p_{1}({\cal I}(d[a/x]))/y,  p_{2}({\cal I}(d[a/x]))/z]\sim_{{\cal I}(E[\langle y,z\rangle/u][a/x])[ p_{1}({\cal I}(d[a/x]))/y,  p_{2}({\cal I}(d[a/x]))/z]} \]
\[\qquad\qquad\qquad{\cal I}(e[a/x]))[  p_{1}({\cal I}(d[a/x]))/y,  p_{2}({\cal I}(d[a/x]))/z].\]

Combining $(ii)$ and $(iii)$ and using the inductive hypothesis on substitution applied to $E$ in a way similar to that of the previous subsection we obtain that 

\[{\cal I}(e)[{\cal I}(a)/x,  p_{1}({\cal I}(d)[{\cal I}(a)/x])/y,  p_{2}({\cal I}(d)[{\cal I}(a)/x])/z]\sim_{{\cal I}(E[d/u][a/x])}\]
\[\qquad\qquad\qquad {\cal I}(e[a/x]))[  p_{1}({\cal I}(d[a/x]))/y,  p_{2}({\cal I}(d[a/x]))/z]\]
which is exactly 
\[{\cal I}(El_{\Sigma}(d,(y,z)e))[{\cal I}(a)/x]\sim_{{\cal I}(E[d/u][a/x])} {\cal I}(El_{\Sigma}(d,(y,z)e)[a/x]).\]

\subsubsection*{Dependent sum conversion}
Suppose we derived in $\mtts$ the judgment $El_{\Sigma}(\langle b, c\rangle, (y,z)e)=e[b/y,c/z][x\in A]$ by conversion after having derived 
\begin{enumerate}
\item $b\in B[x\in A]$, 
\item $c\in C[b/y][x\in A]$, 
\item $E\, set\,[x\in A, u\in (\Sigma y\in B)C]$ and 
\item $e\in E[\langle y,z\rangle/u][x\in A, y\in B, z\in C]$. 
\end{enumerate}
By inductive hypothesis on validity we have that ${\cal R}\vDash b\in B[x\in A]$ and ${\cal R}\vDash c\in C[b/y][x\in A]$. Moreover the judgment $x\in A[x\in A]$ is provable in $\mtts$ and valid in ${\cal R}$. So we can apply the inductive hypothesis on substitution to $e$ obtaining that in $\tar$
\[\forall x(x\in {\cal J}(A)\rightarrow {\cal I}(e)[{\cal I}(b)/y,{\cal I}(c)/z]\sim_{{\cal I}(E[\langle b,c\rangle/u])}{\cal I}(e[b/y,c/z]))\]
and this is exactly 
\[\forall x(x\in {\cal J}(A)\rightarrow {\cal I}(El_{\Sigma}(\langle b,c\rangle),(y,z)e)\sim_{{\cal I}(E[\langle b,c\rangle/u]}{\cal I}(e[b/y,c/z])).\]

\subsubsection*{The binary sum set}
\subsubsection*{Binary sum formation} Suppose that ${\cal R}\vDash B\, set\,[x\in A]$ and ${\cal R}\vDash C\, set\,[x\in A]$, then 
\[\tar\vdash\forall x\forall x'(x\sim_{{\cal I}(A)}x'\rightarrow \forall t\forall s(t\sim_{{\cal I}(B)}s\leftrightarrow t\sim_{{\cal I}(B)[x'/x]}s));\]
\[\tar\vdash\forall x\forall x'(x\sim_{{\cal I}(A)}x'\rightarrow \forall t\forall s(t\sim_{{\cal I}(C)}s\leftrightarrow t\sim_{{\cal I}(C)[x'/x]}s)).\]
From $x\sim_{\cal I}(A) x'$ we can also deduce in $\tar$ that
\[ p_{1}(t)= p_{1}(s)\wedge (( p_{1}(t)=0\wedge  p_{2}(t)\sim_{{\cal I}(B)} p_{2}(s))\vee( p_{1}(t)=1\wedge  p_{2}(t)\sim_{{\cal I}(C)} p_{2}(s)))\leftrightarrow\]
\[\qquad p_{1}(t)= p_{1}(s)\wedge (( p_{1}(t)=0\wedge  p_{2}(t)\sim_{{\cal I}(B)[x'/x]} p_{2}(s))\vee( p_{1}(t)=1\wedge  p_{2}(t)\sim_{{\cal I}(C)[x'/x]} p_{2}(s)))\]
which means that $\tar\vdash \forall x\forall x'(x\sim_{{\cal I}(A)}x'\rightarrow \forall t\forall s(t\sim_{{\cal I}(B+C)}s\leftrightarrow t\sim_{{\cal I}(B+C)[x'/x]}s))$ that is ${\cal R}\vDash B+C\, set\,[x\in A]$. 

\subsubsection*{Substitution for binary sum formation} Suppose that in $\mtts$ we derived $B+C\,set\,[x\in A]$ by formation after having derived $B\, set\,[x\in A]$ and $C\, set\,[x\in A]$, and suppose that $\mtts\vdash a\in A$ and $\tar\vdash {\cal I}(a)\varepsilon {\cal J}(A)$. Then by inductive hypothesis on substitution for $B$ and $C$ with respect to $a$ we obtain that in $\tar$ 
\[\forall t\forall s(t\sim_{{\cal I}(B)[{\cal I}(a)/x]}s\leftrightarrow t\sim_{{\cal I}(B[a/x])}s));\]
\[\forall t\forall s(t\sim_{{\cal I}(C)[{\cal I}(a)/x]}s\leftrightarrow t\sim_{{\cal I}(C[a/x])}s));\]
and from these it follows that \[{\cal I}(B+C)[{\cal I}(a)/x]\doteq {\cal I}(B+C[a/x])\]
as $\sim_{{\cal I}(\Sigma y\in B)C}$ only depends on ${\cal I}(B)$ and ${\cal I}(C)$.

%
%
\subsubsection*{Binary sum introduction} Suppose $\mathsf{inl}(b)\in B+C[x\in A]$ is derived by introduction after having derived in $\mtts$ the judgment $b\in B[x\in A]$. Then we can suppose by inductive hypothesis on validity that
\[\tar\vdash\forall x \forall x'(x\sim_{{\cal I}(A)}x'\rightarrow {\cal I}(b)\sim_{{\cal I}(B)}{\cal I}(b)[x'/x])\]
from which it comes that 
\[\tar\vdash\forall x \forall x'(x\sim_{{\cal I}(A)}x'\rightarrow  p_{1}(\langle 0, {\cal I}(b)\rangle)= p_{1}(\langle 0, {\cal I}(b)[x'/x]\rangle)\wedge\]
\[\qquad\qquad  p_{1}(\langle 0, {\cal I}(b)\rangle )=0\wedge  p_{2}(\langle 0,{\cal I}(b)\rangle)\sim_{{\cal I}(B)} p_{2}(\langle 0,{\cal I}(b)[x'/x]\rangle))\]
which entails that ${\cal R}\vDash \mathsf{inl}(b)\in B+C[x\in A]$.
A similar reasoning holds for $\mathsf{inr}(c)$.
\subsubsection*{Substitution for binary sum introduction}
For substitution, the substitutions for $\mathsf{inl}(b)$ or $\mathsf{inr}(c)$ directly come from the inductive hypothesis on substitution for $b$ and $c$ respectively.
\subsubsection*{Binary sum elimination} Suppose we derived $El_{+}(d,(y)b,(y)c)\in C[d/z][x\in A]$ in $\mtts$ by elimination after having derived 
\begin{enumerate}
\item $d\in B+C[x\in A]$, 
\item $C\, set\,[x\in A,z\in B+C]$, 
\item $b\in C[\mathsf{inl}(y)/z][x\in A, y\in B]$ and 
\item $x\in C[\mathsf{inr}(y)/z][x\in A, y\in C]$. 
\end{enumerate}
By inductive hypothesis on validity, in $\tar$, if we assume $x\sim_{{\cal I}(A)}x'$ we have that 
\[( p_{1}({\cal I}(d))= p_{1}({\cal I}(d)[x'/x])=0\wedge  p_{2}({\cal I}(d))\sim_{{\cal I}(B)} p_{2}({\cal I}(d)[x'/x]))\vee\]
\[( p_{1}({\cal I}(d))= p_{1}({\cal I}(d)[x'/x])=1\wedge  p_{2}({\cal I}(d))\sim_{{\cal I}(C)} p_{2}({\cal I}(d)[x'/x]));\]
\[\forall y\forall y'(y\sim_{{\cal I}(B)} y'\rightarrow  {\cal I}(b)\sim_{{\cal I}(C[\mathsf{inl}(y)/z])} {\cal I}(b)[x'/x,y'/y]);\]
\[\forall y\forall y'(y\sim_{{\cal I}(B)} y'\rightarrow  {\cal I}(c)\sim_{{\cal I}(C[\mathsf{inr}(y)/z])} {\cal I}(c)[x'/x,y'/y]).\]
Using these three conditions we immediately obtain that 
\[( p_{1}({\cal I}(d))= p_{1}({\cal I}(d)[x'/x])=0\wedge\]
\[\qquad {\cal I}(b)[ p_{2}({\cal I}(d))/y]\sim_{{\cal I}(C[\mathsf{inl}(y)/z])[ p_{2}({\cal I}(d))/y]}{\cal I}(b)[x'/x, p_{2}({\cal I}(d)[x'/x])/y])\vee\]
\[\qquad\qquad( p_{1}({\cal I}(d))= p_{1}({\cal I}(d)[x'/x])=1\wedge \]
\[\qquad\qquad\qquad{\cal I}(c)[ p_{2}({\cal I}(d))/y]\sim_{{\cal I}(C[\mathsf{inr}(y)/z])[ p_{2}({\cal I}(d))/y]}{\cal I}(c)[x'/x, p_{2}({\cal I}(d)[x'/x])/y]).\]
Using the inductive hypothesis on substitution for $C$ with respect to 
\[x\in A[x\in A, y\in A],\,\mathsf{inl}(y)\in B+C[x\in A, y\in B]\] and with respect to 
\[x\in A[x\in A, y\in A],\,\mathsf{inl}(y)\in B+C[x\in A, y\in B]\]
 we obtain that 
\[( p_{1}({\cal I}(d))= p_{1}({\cal I}(d)[x'/x])=0\wedge \]
\[\qquad{\cal I}(b)[ p_{2}({\cal I}(d))/y]\sim_{{\cal I}(C)[{\cal I}(d)/u]}{\cal I}(b)[x'/x, p_{2}({\cal I}(d)[x'/x])/y])\vee\]
\[\qquad\qquad( p_{1}({\cal I}(d))= p_{1}({\cal I}(d)[x'/x])=1\wedge\]
\[\qquad\qquad\qquad {\cal I}(c)[ p_{2}({\cal I}(d))/y]\sim_{{\cal I}(C)[{\cal I}(d)/u]}{\cal I}(c)[x'/x, p_{2}({\cal I}(d)[x'/x])/y]).\]
Using the inductive hypothesis on substitution for $C$ with respect to $x\in A[x\in A]$ and \\$d\in B+C[x\in A]$ we obtain that 
\[( p_{1}({\cal I}(d))= p_{1}({\cal I}(d)[x'/x])=0\wedge\]
\[\qquad {\cal I}(b)[ p_{2}({\cal I}(d))/y]\sim_{{\cal I}(C[d/u])}{\cal I}(b)[x'/x, p_{2}({\cal I}(d)[x'/x])/y])\vee\]
\[\qquad\qquad( p_{1}({\cal I}(d))= p_{1}({\cal I}(d)[x'/x])=1\wedge\]
\[\qquad\qquad\qquad {\cal I}(c)[ p_{2}({\cal I}(d))/y]\sim_{{\cal I}(C[d/u])}{\cal I}(c)[x'/x, p_{2}({\cal I}(d)[x'/x])/y]).\]
By definition this is equivalent to 
\[{\cal I}(El_{+}(d,(y)b,(y)c))\sim_{{\cal I}(C[d/u])}{\cal I}(El_{+}(d,(y)b,(y)c))[x'/x].\]
This is what we needed.

\subsubsection*{Substitution for binary sum elimination} Suppose we derived $El_{+}(d,(y)b,(y)c)\in C[d/z][x\in A]$ in $\mtts$ by elimination after having derived 
\begin{enumerate}
\item $d\in B+C[x\in A]$, 
\item $C\, set\,[x\in A,z\in B+C]$, 
\item $b\in C[\mathsf{inl}(y)/z][x\in A, y\in B]$ and 
\item $x\in C[\mathsf{inr}(y)/z][x\in A, y\in C]$.
\end{enumerate}
Suppose moreover that $\mtts\vdash a\in A$ and $\tar\vdash {\cal I}(a)\varepsilon {\cal J}(A)$. Recall that by the structure of rules in $\mtts$ we have already derived $B\, set\,[x\in A]$ and $C\, set\,[x\in A]$. 
By inductive hypothesis on substitution we have that in $\tar$:
\[( p_{1}({\cal I}(d)[{\cal I}(a)/x])= p_{1}({\cal I}(d[a/x]))=0\wedge  p_{2}({\cal I}(d)[{\cal I}(a)/x])\sim_{{\cal I}(B)} p_{2}({\cal I}(d[a/x])))\vee\]
\[\qquad\qquad( p_{1}({\cal I}(d)[{\cal I}(a)/x])= p_{1}({\cal I}(d[a/x]))=1\wedge  p_{2}({\cal I}(d)[{\cal I}(a)/x])\sim_{{\cal I}(C)} p_{2}({\cal I}(d[a/x])));\]
\[{\cal I}(b)[{\cal I}(a)/x]\sim_{{\cal I}(C[\mathsf{inl}(y)/z][a/x])} {\cal I}(b[a/x]);\]
\[{\cal I}(c)[{\cal I}(a)/x]\sim_{{\cal I}(C[\mathsf{inr}(y)/z][a/x])} {\cal I}(c[a/x]).\]
Using the inductive hypothesis on validity for $b$ and $c$, the inductive hypothesis on substitution for $B$ and $C$ with respect $a$, and the previous relations we obtain that 
\[( p_{1}({\cal I}(d)[{\cal I}(a)/x])= p_{1}({\cal I}(d[a/x]))=0\wedge\]
\[ {\cal I}(b)[{\cal I}(a)/x][ p_{2}({\cal I}(d)[{\cal I}(a)/x])/y]\sim_{{\cal I}(C[\mathsf{inl}(y)/z][{\cal I}(a)/x, p_{2}({\cal I}(d[a/x])/y])}{\cal I}(b)[{\cal I}(a)/x][ p_{2}({\cal I}(d[a/x]))/y]\wedge\]
\[\qquad\qquad{\cal I}(b)[{\cal I}(a)/x][ p_{2}({\cal I}(d[a/x]))/y]\sim_{{\cal I}(C[\mathsf{inl}(y)/u][a/u])[{\cal I}(d[a/x])/y]} {\cal I}(b[a/x])[ p_{2}({\cal I}(d[a/x]))/y] )\vee\]
\[( p_{1}({\cal I}(d)[{\cal I}(a)/x])= p_{1}({\cal I}(d[a/x]))=1\wedge \]
\[{\cal I}(c)[{\cal I}(a)/x][ p_{2}({\cal I}(d)[{\cal I}(a)/x])/y]\sim_{{\cal I}(C[\mathsf{inr}(y)/z][{\cal I}(a)/x, p_{2}({\cal I}(d[a/x])/y])}{\cal I}(c)[{\cal I}(a)/x][ p_{2}({\cal I}(d[a/x]))/y]\wedge\]
\[\qquad\qquad{\cal I}(c)[{\cal I}(a)/x][ p_{2}({\cal I}(d[a/x]))/y] \sim_{{\cal I}(C[\mathsf{inr}(y)/u][a/u])[{\cal I}(d[a/x])/y]} {\cal I}(c[a/x])[ p_{2}({\cal I}(d[a/x]))/y] ).\]

Applying many times the inductive hypothesis on substitution for $C$ with respect to different terms, we obtain that
\[( p_{1}({\cal I}(d)[{\cal I}(a)/x])= p_{1}({\cal I}(d[a/x]))=0\wedge\]
\[\qquad {\cal I}(b)[{\cal I}(a)/x][ p_{2}({\cal I}(d)[{\cal I}(a)/x])/y]\sim_{{\cal I}(C[d/u][a/x]} {\cal I}(b[a/x])[ p_{2}({\cal I}(d[a/x]))/y] )\vee\]
\[\qquad\qquad( p_{1}({\cal I}(d)[{\cal I}(a)/x])= p_{1}({\cal I}(d[a/x]))=1\wedge\]
\[\qquad\qquad\qquad {\cal I}(c)[{\cal I}(a)/x][ p_{2}({\cal I}(d)[{\cal I}(a)/x])/y]\sim_{{\cal I}(C[d/u][a/u]})
{\cal I}(c[a/x])[ p_{2}({\cal I}(d[a/x]))/y] ).\]
Using the definition of the interpretation of $El_{+}(d,(y)b,(y)c)$ we can conclude.
\subsubsection*{Binary sum conversion} Suppose we derived $El_{+}(\mathsf{inl}(e),(y)b,(y)c)=b[e/y]\in C[\mathsf{inl}(e)/z][x\in A]$ in $\mtts$ by conversion after having derived $e\in B[x\in A]$, $C\, set\,[x\in A,z\in B+C]$, $b\in C[\mathsf{inl}(y)/z][x\in A, y\in B]$ and $x\in C[\mathsf{inr}(y)/z][x\in A, y\in C]$.
Suppose $x\in {\cal J}(A)$, then in $\tar$ we have that by inductive hypothesis on substitution for $b$ with respect to $x\in A[x\in A]$ and $e\in B[x\in A]$
\[{\cal I}(b)[{\cal I}(e)/y]\sim_{{\cal I}(B)[\mathsf{inl}(e)/u]} {\cal I}(b[e/y])\]
which is exactly 
\[{\cal I}(El_{+}(\mathsf{inl}(e),(y)b,(y)c))\sim_{{\cal I}(B)[\mathsf{inl}(e)/u]} {\cal I}(b[e/y])\]
which is what we needed.
The other case is symmetric.

\subsubsection*{List sets} This case is similar to that of natural numbers but little more complicated. You must use induction with respect to the length of lists.
\subsubsection*{Falsum propositions} Completely analogous to the case of empty set.
 \subsubsection*{Conjunction propositions} Analogous to dependent sums.
\subsubsection*{Disjunction propositions} Analogous to binary sums.
\subsubsection*{Implication propositions} Analogous to dependent products.
\subsubsection*{Universal quantification propositions} Analogous to dependent products.
\subsubsection*{Existential quantification propositions}Analogous to dependent sums.
\subsubsection*{Identity propositions}
\subsubsection*{Identity formation} Suppose that ${\cal R}\vDash B\, set\,[x\in A]$, ${\cal R}\vDash b\in B[x\in A]$ and ${\cal R}\vDash c\in B[x\in A]$, then 
\[\tar\vdash\forall x\forall x'(x\sim_{{\cal I}(A)}x'\rightarrow \forall t\forall s(t\sim_{{\cal I}(B)}s\leftrightarrow t\sim_{{\cal I}(B)[x'/x]}s));\]
\[\tar\vdash\forall x\forall x'(x\sim_{{\cal I}(A)}x'\rightarrow {\cal I}(b)\sim_{{\cal I}(B)}{\cal I}(b)[x'/x]);\]
\[\tar\vdash\forall x\forall x'(x\sim_{{\cal I}(A)}x'\rightarrow {\cal I}(c)\sim_{{\cal I}(B)}{\cal I}(c)[x'/x]).\]
Using the previous conditions, we can deduce in $\tar$ from $x\sim_{{\cal I}(A)} x'$ that
\[\left(t\sim_{{\cal I}(B)}{\cal I}(b)\wedge {\cal I}(b)\sim_{{\cal I}(B)}{\cal I}(c)\right)\leftrightarrow\left( t\sim_{{\cal I}(B)[x'/x]}{\cal I}(b)[x'/x]\wedge {\cal I}(b)[x'/x]\sim_{{\cal I}(B)[x'/x]}{\cal I}(c)[x'/x]\right) \]
which means that $\tar\vdash \forall x\forall x'(x\sim_{{\cal I}(A)}x'\rightarrow \forall t(t\varepsilon {\cal I}(\mathsf{Id}(B,b,c))\leftrightarrow t\varepsilon {\cal I}(\mathsf{Id}(B,b,c))[x'/x]))$ that is 
${\cal R}\vDash \mathsf{Id}(B,b,c)[x\in A]$.
\subsubsection*{Substitution for Formation} In addition to the hypotheses in the previous point add that $\mtts\vdash a\in A $ and \\
$\tar\vdash {\cal I}(a)\varepsilon {\cal J}(A)$. By inductive hypothesis on substitution we have that in $\tar$
\[\forall t\forall s(t\sim_{{\cal I}(B)[{\cal I}(a)/x]}s\leftrightarrow t\sim_{{\cal I}(B[a/x])}s);\]
\[{\cal I}(b)[{\cal I}(a)/x]\sim_{{\cal I}(B[a/x])}{\cal I}(b[a/x]);\]
\[{\cal I}(c)[{\cal I}(a)/x]\sim_{{\cal I}(B[a/x])}{\cal I}(c[a/x]).\]
From these we obtain that for every $t$, in $\tar$,
\[t\varepsilon {\cal J}(\mathsf{Id}(B,b,c))[{\cal I}(a)/x]\]
is equivalent to 
\[t\sim_{{\cal I}(B)[{\cal I}(a)/x]}{\cal I}(b)[{\cal I}(a)/x]\wedge {\cal I}(b)[{\cal I}(a)/x]\sim_{{\cal I}(B)[{\cal I}(a)/x]}{\cal I}(c)[{\cal I}(a)/x] \]
and this is equivalent to 
\[t\sim_{{\cal I}(B[a/x]}{\cal I}(b)[{\cal I}(a)/x]\wedge {\cal I}(b)[{\cal I}(a)/x]\sim_{{\cal I}(B[a/x])}{\cal I}(c)[{\cal I}(a)/x] \]
and this is equivalent to
\[t\sim_{{\cal I}(B[a/x]}{\cal I}(b[a/x])\wedge {\cal I}(b[a/x])\sim_{{\cal I}(B[a/x])}{\cal I}(c[a/x]) \]
which is 
\[t\varepsilon {\cal J}(\mathsf{Id}(B,b,c)[a/x]).\]
\subsubsection*{Identity introduction} Suppose we derive $\mathsf{id}(b)\in \mathsf{Id}(B,b,b)[x\in A]$ by introduction in $\mtts$ after having derived \\$b\in B[x\in A]$. 
By inductive hypothesis on validity we have that in $\tar$
\[\forall x\forall x'(x\sim_{{\cal I}(A)}x'\rightarrow {\cal I}(b)\sim_{{\cal I}(B)}{\cal I}(b)[x'/x]).\]
This implies that in $\tar$ we have 
\[\forall x\forall x'(x\sim_{{\cal I}(A)}x'\rightarrow {\cal I}(b)\sim_{{\cal I}(B)}{\cal I}(b)\wedge {\cal I}(b)[x'/x]\sim_{{\cal I}(B)}{\cal I}(b) )\]
which means that in $\tar$
\[\forall x\forall x'(x\sim_{{\cal I}(A)}x'\rightarrow {\cal I}(\mathsf{id}(b))\varepsilon {\cal I}(\mathsf{Id}(B,b,b))\wedge  {\cal I}(\mathsf{id}(b))[x'/x]\varepsilon {\cal I}(\mathsf{Id}(B,b,b)))\]
which means that ${\cal R}\vDash \mathsf{id}(b)\in \mathsf{Id}(B,b,b)[x\in A]$.

\subsubsection*{Substitution for identity introduction} In addition to the hypothesis in the previous point suppose that $\mtts\vdash a\in A$ and \\$\tar\vdash {\cal I}(a)\varepsilon {\cal J}(A)$.
By inductive hypothesis on substitution we have that 
\[\tar\vdash{\cal I}(b)[{\cal I}(a)/x]\sim_{ {\cal I}(B[a/x])}{\cal I}(b[a/x]).\]
 This implies in particular that 
\[{\cal I}(\mathsf{id}(b))[{\cal I}(a)/x]\varepsilon {\cal J}(\mathsf{Id}(B,b,b)[a/x])\wedge {\cal I}(\mathsf{id}(b)[a/x])\varepsilon {\cal J}(\mathsf{Id}(B,b,b)[a/x])\]
which is what we needed.
\subsubsection*{Identity elimination} Suppose we derived in $\mtts$ the judgment $El_{Id}(p,(y)r))\in \delta[b/y,c/z][x\in A]$ by elimination after having derived in $\mtts$ the judgments
\begin{enumerate} 
\item $b\in B[x\in A]$, 
\item $c\in B[x\in A]$, 
\item $p\in \mathsf{Id}(B,b,c)[x\in A]$, 
\item $\delta\,prop_{s}[x\in A,y\in B,z\in B]$,
\item $\delta[b/y,c/z]\,prop_{s}[x\in A]$ and 
\item $r\in \delta[y/z][x\in A, y\in B]$ (and so also $\delta[y/z]\, set\,[x\in A, y\in B]$ by the structure of the rules of $\mtts$). 
\end{enumerate}
By inductive hypothesis on validity for $p$, $a$ and $b$ we have that in $\tar$
\[(*)\,\forall x\forall x'(x\sim_{{\cal I}(A)}x'\rightarrow {\cal I}(p)\sim_{{\cal I}(B)} {\cal I}(b)\wedge {\cal I}(p)[x'/x]\sim_{{\cal I}(B)} {\cal I}(p)\wedge {\cal I}(b)\sim_{{\cal I}(B)} {\cal I}(c) \wedge\]
\[\qquad\qquad\qquad {\cal I}(b)\sim_{{\cal I}(B)}{\cal I}(b)[x'/x]\wedge {\cal I}(c)\sim_{{\cal I}(B)}{\cal I}(c)[x'/x]).\]
By inductive hypothesis on validity for $\delta[y/z]$ and what we just showed we have that in $\tar$
\[(**)\,\forall x(x\varepsilon {\cal J}(A)\rightarrow {\cal I}(\delta[y/z])[{\cal I}(p)/y]\leftrightarrow {\cal I}(\delta[y/z])[{\cal I}(b)/y]).\]
 By using the inductive hypothesis on substitution for $\delta$ with respect to $x\in A[x\in A, y\in B]$ and two copies of $y\in B[x\in A,y\in B]$ we obtain that in $\tar$
 \[(***)\,\forall x\forall y(x\varepsilon {\cal J}(A)\wedge y\varepsilon {\cal J}(B)\rightarrow {\cal I}(\delta)[y/z]\doteq {\cal I}(\delta[y/z])).\]
 Putting $(*)$, $(**)$ and $(***)$ together we obtain that in $\tar$
 \[\forall x(x\varepsilon {\cal J}(A)\rightarrow {\cal I}(\delta[y/z])[{\cal I}(p)/y]\leftrightarrow {\cal I}(\delta)[{\cal I}(b)/y,{\cal I}(b)/z]).\]
 Using the inductive hypothesis on validity for $\delta$ and $(*)$ we obtain that  in $\tar$
  \[\forall x(x\varepsilon {\cal J}(A)\rightarrow {\cal I}(\delta[y/z])[{\cal I}(p)/y]\leftrightarrow {\cal I}(\delta)[{\cal I}(b)/y,{\cal I}(c)/z]).\]
Using the inductive hypothesis on substitution for $\delta$ with respect to $x\in A[x\in A]$, \\
$b\in B[x\in A]$ and $c\in B[x\in A]$ we obtain that in $\tar$
  \[(****)\,\forall x(x\varepsilon {\cal J}(A)\rightarrow {\cal I}(\delta[y/z])[{\cal I}(p)/y]\leftrightarrow {\cal I}(\delta[b/y,c/z])).\]
Using the inductive hypothesis on validity for $p$ and $r$ we obtain that in $\tar$
\[(*****)\,\forall x\forall x'(x\sim_{{\cal I}(A)}x'\rightarrow {\cal I}(r)[{\cal I}(p)/y]\varepsilon {\cal J}(\delta[y/z])[{\cal I}(p)/y] \wedge\]
\[\qquad\qquad\qquad{\cal I}(r)[x'/x][{\cal I}(p)[x'/x]/y]  \varepsilon {\cal J}(\delta[y/z])[{\cal I}(p)/y]   ).\]
From $(****)$ and $(*****)$ we obtain that in $\tar$
\[\forall x\forall x'(x\sim_{{\cal I}(A)}x'\rightarrow {\cal I}(r)[{\cal I}(p)/y]\varepsilon {\cal J}(\delta[b/y,c/z]) \wedge{\cal I}(r)[x'/x][{\cal I}(p)[x'/x]/y]  \varepsilon {\cal J}(\delta[b/y,c/z])  )\]
which means that in $\tar$
\[\forall x\forall x'(x\sim_{{\cal I}(A)}x'\rightarrow {\cal I}(El_{Id}(p,(y)r))\varepsilon {\cal J}(\delta[b/y,c/z]) \wedge{\cal I}(El_{Id}(p,(y)r)[x'/x])  \varepsilon {\cal J}(\delta[b/y,c/z])  )\]
which exactly means that ${\cal R}\vDash El_{Id}(p,(y)r)\in \delta[b/y,c/z] [x\in A]$.


 \subsubsection*{Substitution for identity elimination}
 In addition to the hypotheses of the previous point assume that $\mtts\vdash a\in A $ and \\$\tar\vdash {\cal I}(a)\varepsilon {\cal J}(A)$. The proof is similar to that of the previous point. 
\subsubsection*{Collections of codes for small propositions and sets}
We start by considering validity and the rules of the collection $\mathsf{Set}$.
\subsubsection*{Universe of sets}
\subsubsection*{Universe of sets formation}
The formation is trivially verified.
\subsubsection*{Universe of sets introduction}
The validity of the judgments $\widehat{N_{0}}\in \mathsf{Set}$, $\widehat{N_{1}}\in \mathsf{Set}$ and $\widehat{N}\in \mathsf{Set}$ follows directly from the coding. It is an immediate exercise in classical logic to show that if ${\cal R}\vDash p\in \mathsf{Set}$ and ${\cal R}\vDash q\in \mathsf{Set}$, then also ${\cal R}\vDash p\hat{+}q\in \mathsf{Set}$ and if ${\cal R}\vDash p\in \mathsf{Set}$, then ${\cal R}\vDash \widehat{List}(p)\in \mathsf{Set}$. It is also immediate, by definition, to show that if ${\cal R}\vDash p\in \mathsf{prop}_{s}$, then ${\cal R}\vDash p\in \mathsf{Set}$. In the case of $(\widehat{\Sigma x\in A})p$ and $(\widehat{\Pi x\in A})p$ we must use the inductive hypothesis on coding and on validity to show (in a way analogous to that of the proof of coding which will follow) that if $\mtts \vdash A\,set$ and $\mtts \vdash p\in \mathbf{Set}[x\in A]$, then $\tar\vdash \mathbf{Fam}(\Lambda x. {\cal I}(p),{\cal I}(\widehat{A}))$ and then to show ${\cal R}\vDash  (\widehat{\Sigma x\in A})p\in \mathsf{Set}$ and ${\cal R}\vDash (\widehat{\Pi x\in A})p\in \mathsf{Set}$.
The proof of substitution consists of an easy verification. 

\subsubsection*{Universe of small propositions}
The case of $\mathsf{prop}_{s}$ is completely analogous and elimination and conversions follow from the proof-irrelevance. Proofs of the statements for propositions are analogous to those for small propositions. 
\subsubsection*{Other collections}Proofs of the statements for $\Sigma$-collections are analogous to those for $\Sigma$-sets. Proofs of the statements for $\rightarrow \mathsf{prop_{s}}$-collections are analogous to those for $\Pi$-sets. Substitutions can be proven by using the inductive hypotheses.

\subsubsection*{\textbf{General rules}}

\subsubsection*{Assumption of variable}
We must show that ${\cal R}\vDash x_{j}\in A_{j}[x_{1}\in A_{1},...,x_{\mathsf{n}}\in A_{\mathsf{n}}]$ for $1\leq j\leq \mathsf{n}$. This is obviously true as 
\[\tar\vdash\forall x_{1}\forall x'_{1}...\forall x_{\mathsf{n}}\forall x'_{\mathsf{n}}(x_{1}\sim_{{\cal I}(A_{1})}x'_{1}\wedge...\wedge x_{\mathsf{n}}\sim_{{\cal I}(A_{\mathsf{n}})}x'_{\mathsf{n}}\rightarrow x_{j}\sim_{{\cal I}(A_{j})}x'_{j}) .\]
For substitution if $\mtts\vdash a_{1}\in A_{1}$,...,$\mtts\vdash a_{\mathsf{n}}\in A_{\mathsf{n}}[a_{1}/x_{1},...,a_{n-1}/x_{n-1}]$ and these judgments are validated by ${\cal R}$, then in particular
\[{\cal I}(a_{j})\sim_{{\cal I}(A_{j}[a_{1}/x_{1},...,a_{j-1}/x_{j-1}])}{\cal I}(a_{j})\]
which is exactly 
\[{\cal I}(x_{j})[{\cal I}(a_{1})/x_{1},...,{\cal I}(a_{\mathsf{n}})/x_{\mathsf{n}}]\sim_{{\cal I}(A_{j}[a_{1}/x_{1},...,a_{j-1}/x_{j-1}])}{\cal I}(x_{j}[a_{1}/x_{1},...,a_{\mathsf{n}}/x_{\mathsf{n}}]).\]

\subsubsection*{Reflexivity, symmetry and transitivity of type equality}
Suppose that from $\mtts\vdash B\,type\,[x\in A]$ we derive by reflexivity that \\
$\mtts\vdash B=B\,type\,[x\in A]$. By inductive hypothesis on validity we have that \[\tar \vdash \forall x\forall x'(x\sim_{{\cal I}(A)}x'\rightarrow\forall t\forall s (t\sim_{{\cal I}(B)}s\leftrightarrow t\sim_{{\cal I}(B)[x'/x]}s) ).\] Now $x\varepsilon {\cal J}(A)$ implies in $\tar$ that $x\sim_{{\cal I}(A)}x$. So 
\[\tar \vdash \forall x(x\varepsilon {\cal J}(A)\rightarrow\forall t\forall s (t\sim_{{\cal I}(B)}s\leftrightarrow t\sim_{{\cal I}(B)}s) )\] which exactly means that 
\[{\cal R}\vDash B=B\,type\,[x\in A].\]

Suppose now that from $\mtts\vdash B=C\,type\,[x\in A]$  we derive by symmetry \[\mtts\vdash C=B\,type\,[x\in A].\] By inductive hypothesis on validity we have that \[\tar \vdash \forall x\forall x'(x\varepsilon {\cal J}(A)\rightarrow\forall t\forall s (t\sim_{{\cal I}(B)}s\leftrightarrow t\sim_{{\cal I}(C)}s) ).\]
This clearly entails that 
\[\tar \vdash \forall x\forall x'(x\varepsilon {\cal J}(A)\rightarrow\forall t\forall s (t\sim_{{\cal I}(C)}s\leftrightarrow t\sim_{{\cal I}(B)}s) )\]
which exactly means that ${\cal R}\vDash C=B\,type\,[x\in A]$.

Suppose now that from $\mtts\vdash B=C\,type\,[x\in A]$ and $\mtts\vdash C=D\,type\,[x\in A]$ we derive by transitivity $\mtts\vdash B=D\,type\,[x\in A]$. By inductive hypothesis on validity we have that $\tar \vdash \forall x\forall x'(x\varepsilon {\cal J}(A)\rightarrow\forall t\forall s (t\sim_{{\cal I}(B)}s\leftrightarrow t\sim_{{\cal I}(C)}s) )$ and 
\[\tar \vdash \forall x\forall x'(x\varepsilon {\cal J}(A)\rightarrow\forall t\forall s (t\sim_{{\cal I}(C)}s\leftrightarrow t\sim_{{\cal I}(D)}s) ).\]
And this clearly entails that 
\[\tar \vdash \forall x\forall x'(x\varepsilon {\cal J}(A)\rightarrow\forall t\forall s (t\sim_{{\cal I}(B)}s\leftrightarrow t\sim_{{\cal I}(D)}s) )\]
which exactly means that ${\cal R}\vDash B=D\,type\,[x\in A]$.

\subsubsection*{Substitution for types}

We restrict to the case of one substitution.

 Suppose that $\mtts\vdash D[b_{1}/y_{1},c_{1}/z_{1}]=D[b_{2}/y_{2},c_{2}/z_{2}]\,type\,[x\in A]$ is derived by $sub-T$ after having derived in $\mtts$ the judgments \begin{enumerate}
\item $D\,type\,[x\in A, y\in B, z\in C]$, 
\item $b_{1}\in B[x\in A]$, 
\item $b_{2}\in B[x\in A]$,
\item $c_{1}\in C[b_{1}/y][x\in A]$, 
\item $c_{2}\in C[b_{2}/y][x\in A]$, 
\item $b_{1}=b_{2}\in B[x\in A]$, 
\item $c_{1}=c_{2}\in C[b_{1}/y][x\in A]$, 
\item $B\,type[x\in A]$ and 
\item $C\,type[x\in A, y\in B]$.
\end{enumerate}
Using the inductive hypothesis on validity and on substitution we can obtain that in $\tar$
\[\forall x(x\varepsilon {\cal J}(A)\rightarrow {\cal I}(D)[{\cal I}(b_{1})/y,{\cal I}(c_{1})/z]\doteq {\cal I}(D[b_{1}/y,c_{1}/z])),\] 
\[\forall x(x\varepsilon {\cal J}(A)\rightarrow {\cal I}(D)[{\cal I}(b_{2})/y,{\cal I}(c_{2})/z]\doteq {\cal I}(D[b_{2}/y,c_{2}/z])),\]
\[\forall x(x\varepsilon {\cal J}(A)\rightarrow {\cal I}(C)[{\cal I}(b_{1})/y]\doteq {\cal I}(C[b_{1}/y])).\]
These together with the inductive hypothesis on validity for 
\begin{enumerate}
\item $D\,type\,[x\in A, y\in B, z\in C]$, 
\item $b_{1}=b_{2}\in B[x\in A]$ (which is $\tar \vdash \forall x(x\varepsilon {\cal J}(A)\rightarrow {\cal I}(b_{1})\sim_{{\cal I}(B)}{\cal I}(b_{2}))$),
\item $c_{1}=c_{2}\in C[b_{1}/y][x\in A]$ (which is $\tar \vdash \forall x(x\varepsilon {\cal J}(A)\rightarrow {\cal I}(c_{1})\sim_{{\cal I}(C[b_{1}/y])}{\cal I}(c_{2}))$), 
\end{enumerate}
gives us that in $\tar$
\[\forall x(x\varepsilon {\cal J}(A)\rightarrow {\cal I}(D[b_{1}/y,c_{1}/z])\doteq {\cal I}(D[b_{2}/y,c_{2}/z])).\]

\subsubsection*{Reflexivity, symmetry and transitivity of definitional equality}
The rules of reflexivity, symmetry and transitivity for terms preserve the validity with premisses provable in $\mtts$, thanks to \ref{prelim}. 
 \subsubsection*{Substitution for terms}
 Suppose that $\mtts\vdash d[b_{1}/y_{1},c_{1}/z_{1}]=d[b_{2}/y_{2},c_{2}/z_{2}]\in D[b_{1}/y_{1},c_{1}/z_{1}]\,[x\in A]$ is derived by $sub$ after having derived in $\mtts$ the judgments $d\in D[x\in A, y\in B, z\in C]$, \\$D\,type\,[x\in A, y\in B, z\in C]$, $b_{1}\in B[x\in A]$, $b_{2}\in B[x\in A]$, $c_{1}\in C[b_{1}/y][x\in A]$, \\$c_{2}\in C[b_{2}/y][x\in A]$, $b_{1}=b_{2}\in B[x\in A]$, $c_{1}=c_{2}\in C[b_{1}/y][x\in A]$, $B\,type\,[x\in A]$ and $C\,type\,[x\in A, y\in B]$ .  As in the case of $sub-T$ we derive that in $\tar$
\[\forall x(x\varepsilon {\cal J}(A)\rightarrow {\cal I}(C)[{\cal I}(b_{1})/y]\doteq {\cal I}(C[b_{1}/y])),\]
\[\forall x(x\varepsilon {\cal J}(A)\rightarrow {\cal I}(D[b_{1}/y,c_{1}/z])\doteq {\cal I}(D[b_{2}/y,c_{2}/z])).\]
Now by using the inductive hypothesis on substitution for $D$ and $d$ we obtain that 
\[\forall x(x\varepsilon {\cal J}(A)\rightarrow {\cal I}(D)[{\cal I}(b_{1})/y,{\cal I}(c_{1})/z]\doteq {\cal I}(D[b_{1}/y,c_{1}/z])),\] 
\[\forall x(x\varepsilon {\cal J}(A)\rightarrow {\cal I}(d)[{\cal I}(b_{1})/y,{\cal I}(c_{1})/z]\sim {\cal I}(d[b_{1}/y,c_{1}/z])),\] 
\[\forall x(x\varepsilon {\cal J}(A)\rightarrow {\cal I}(d)[{\cal I}(b_{2})/y,{\cal I}(c_{2})/z]\sim {\cal I}(d[b_{2}/y,c_{2}/z])),\]
and using these and the previous together with the inductive hypothesis on validity for $d$, $b_{1}=b_{2}$ and $c_{1}=c_{2}$, we obtain that 
\[\forall x(x\varepsilon {\cal J}(A)\rightarrow {\cal I}(d[b_{1}/y,c_{1}/z])\sim_{{\cal I}(D[b_{1}/y,c_{1}/z])} {\cal I}(d[b_{2}/y,c_{2}/z])).  \]

\subsubsection*{Rules of conversions}

Suppose $\mtts\vdash b\in C[x\in A]$ is derived by the rule $conv$ after having derived \\$\mtts\vdash B=C[x\in A]$ and $\mtts\vdash b\in B[x\in A]$. By inductive hypothesis we have that 
\[\forall x\forall x'(x\sim_{{\cal I}(A)}x'\rightarrow {\cal I}(b)\sim_{{\cal I}(B)}{\cal I}(b)[x'/x]);\]
\[\forall x (x\varepsilon {\cal J}(A)\rightarrow {\cal I}(B)\doteq {\cal I}(C)).\]
From these it immediately follows that 
\[\forall x\forall x'(x\sim_{{\cal I}(A)}x'\rightarrow {\cal I}(b)\sim_{{\cal I}(C)}{\cal I}(b)[x'/x])\]
which means that ${\cal R}\vDash b\in C[x\in A]$.

Let us now prove substitution for $conv$. Suppose $\mtts\vdash a\in A$. Then by inductive hypothesis on substitution (using the fact that we know that there are shorter proofs of $B\,type\,[x\in A]$ and $C\,type\,[x\in A]$) we have that if $x\varepsilon {\cal J}(A)$ we have that  in $\tar$ 
\[{\cal I}(b)[{\cal I}(a)/x]\sim_{{\cal I}(B[a/x])}{\cal I}(b[a/x]);\]
\[{\cal I}(B)[{\cal I}(a)/x]\sim {\cal I}(B[a/x]);\]
\[{\cal I}(C)[{\cal I}(a)/x]\sim {\cal I}(C[a/x]);\]
and using the inductive hypothesis on validity we obtain that 
\[{\cal I}(b)[{\cal I}(a)/x]\sim_{{\cal I}(C[a/x])}{\cal I}(b[a/x])\]
which is what we needed.

The rule of conversion immediately follows from the definition of the interpretation of judgments.

\subsubsection*{Coding condition}
First of all notice that, for coding, it is sufficient to show that if $\mtts\vdash A\,set$, then $\tar\vdash \mathbf{Set}(\widehat{A})$. In fact if basic formulas $x\varepsilon {\cal J}(-)$, $\neg(x \varepsilon {\cal J}(-))$, $x\sim_{{\cal I}(-)}y$ and $\neg(x\sim_{{\cal I}(-)}y)$ are equivalent respectively to $x\overline{\varepsilon} {\cal I}(\widehat{-})$, $x\noteps  {\cal I}(\widehat{-})$, $x\equiv_{{\cal I}(\widehat{-})}y$ and $x\not\equiv_{{\cal I}(\widehat{-})}y$, then $(\phi)^{+}$ is equivalent to $\phi$ and $\overline{\phi}$ is equivalent to $\neg \phi$. So we must suppose that $\mtts\vdash A\,set$ is derived by formation from other provable judgments and then we must prove, using the inductive hypothesis, that $\tar\vdash \mathbf{Set}(\widehat{A})$.

The cases $\mathsf{N}_{0},\mathsf{N}_{1}, \mathsf{N}, A+A', List(A), \bot, A\wedge A', A\vee A', A\rightarrow A', \mathsf{Id}(A,a,b)$ are immediate.

\subsubsection*{Coding condition for dependent sums and products}
Suppose that we derived $(\Pi x\in A)B\,set$ or $(\Sigma x\in A)B\,set$ in $\mtts$ by formation after having derived $A\,set$ and $B\, set\,[x\in A]$.
By inductive hypothesis on coding for $A$ we have that 
\[(*)\,\tar\vdash \mathsf{Set}({\cal I}(\widehat{A})).\]
By inductive hypothesis on coding for $B$ we have also that 
\[\tar\vdash\forall x(x\varepsilon{\cal J}(A)\rightarrow \mathbf{Set}({\cal I}(\widehat{B})))\]
and so using classical logic we have that 
\[\tar\vdash\forall x(\neg x\varepsilon{\cal J}(A)\vee \mathbf{Set}({\cal I}(\widehat{B})))\]
and using the inductive hypothesis on coding for $A$ we have that 
\[(**)\,\tar\vdash\forall x(x\noteps {\cal I}(\widehat{A})\vee \mathbf{Set}({\cal I}(\widehat{B}))).\]
Now suppose that $x\sim_{{\cal I}(A)}x'$, then by inductive hypothesis on validity we can deduce in $\tar$ that 
\[\forall t(t\varepsilon {\cal J}(B)\leftrightarrow t\varepsilon {\cal J}(B)[x'/x])\wedge \forall t\forall s(t\sim_{{\cal I}(B)} s\leftrightarrow t\sim_{{\cal I}(B)[x'/x]} s)\]
which is equivalent, by classical logic, to
\[\forall t((\neg t\varepsilon {\cal J}(B)\vee t\varepsilon {\cal J}(B)[x'/x])\wedge (\neg t\varepsilon {\cal J}(B)[x'/x]\vee t\varepsilon {\cal J}(B)))\wedge \]
\[\qquad\qquad\qquad\forall t\forall s((\neg t\sim_{{\cal I}(B)} s\vee t\sim_{{\cal I}(B)[x'/x]} s)\wedge (\neg t\sim_{{\cal I}(B)[x'/x]} s\vee t\sim_{{\cal I}(B)} s) )\]
which is equivalent by inductive hypothesis on coding to 
\[\forall t((t\noteps  {\cal I}(\widehat{B})\vee t\overline{\varepsilon} {\cal I}(\widehat{B})[x'/x])\wedge (t\noteps  {\cal I}(\widehat{B})[x'/x]\vee t\overline{\varepsilon} {\cal I}(\widehat{B})))\wedge \]
\[\qquad\qquad\qquad\forall t\forall s((t\not\equiv_{{\cal I}(\widehat{B})} s\vee t\equiv_{{\cal I}(\widehat{B})[x'/x]} s)\wedge (t\not\equiv_{{\cal I}(\widehat{B})[x'/x]} s\vee t\equiv_{{\cal I}(\widehat{B})} s) ).\]
So 
\[\tar\vdash \forall x\forall x'(x\sim_{{\cal I}(A)}x'\rightarrow \forall t((t\noteps  {\cal I}(\widehat{B})\vee t\overline{\varepsilon} {\cal I}(\widehat{B})[x'/x])\wedge (t\noteps  {\cal I}(\widehat{B})[x'/x]\vee t\overline{\varepsilon} {\cal I}(\widehat{B})))\wedge \]
\[\qquad\qquad\qquad\forall t\forall s((t\not\equiv_{{\cal I}(\widehat{B})} s\vee t\equiv_{{\cal I}(\widehat{B})[x'/x]} s)\wedge (t\not\equiv_{{\cal I}(\widehat{B})[x'/x]} s\vee t\equiv_{{\cal I}(\widehat{B})} s) ).\]
By using classical logic and the inductive hypothesis on coding for $A$, we obtain that 
\[(***)\,\tar\vdash \forall x\forall x'(x\not\equiv_{{\cal I}(\widehat{A})}x'\vee \forall t((t\noteps  {\cal I}(\widehat{B})\vee t\overline{\varepsilon} {\cal I}(\widehat{B})[x'/x])\wedge (t\noteps  {\cal I}(\widehat{B})[x'/x]\vee t\overline{\varepsilon} {\cal I}(\widehat{B})))\wedge \]
\[\qquad\qquad\qquad\forall t\forall s((t\not\equiv_{{\cal I}(\widehat{B})} s\vee t\equiv_{{\cal I}(\widehat{B})[x'/x]} s)\wedge (t\not\equiv_{{\cal I}(\widehat{B})[x'/x]} s\vee t\equiv_{{\cal I}(\widehat{B})} s) ).\]
Combining $(*)$, $(**)$ and $(***)$ we obtain that 
\[\tar\vdash \mathbf{Fam}(\Lambda x.{\cal I}(\widehat{B}),\widehat{A})\]
from which we deduce that 
\[\tar\vdash\mathsf{Set}(\widehat{(\Pi x\in A)B})\wedge\mathsf{Set}(\widehat{(\Sigma x\in A)B})\]

The cases of $\forall$ and $\exists$ are similar.


%
%


\subsubsection*{Consequences of the validity theorem}
We discuss here about the validity in our realizability model for $\mtt$ of some principles, namely Extensionality Equality of Functions, Axiom of Choice and formal Church Thesis.

\begin{enumerate}
\item \textbf{Extensionality Equality of Functions} can be formulated as a proposition in \mtt\ as follows:
\[\begin{array}{rl}({\bf extFun}) & (\forall f\in (\Pi x\in A)\,B)\, (\forall g\in (\Pi x\in A)\,B)\qquad \\
& \qquad \qquad ((\forall x\in A)\,\mathsf{Id}(B,\mathsf{Ap}(f,x),\mathsf{Ap}(g,x))\rightarrow \mathsf{Id}((\Pi x\in A)\,B, f,g))\end{array}\]
Since the judgements $f=g\in (\Pi x\in A)\,B$ and $\mathsf{Ap}(f,x)=\mathsf{Ap}(g,x)\in B\,[x\in A]$
have the same interpretation, {\bf extFun} can be realized by the term $\Lambda f.\Lambda g.\Lambda r.0$, i.\,e.\, our model realises ${\bf extFun}$.\\

\item The \textbf{Axiom of Choice} {\bf AC}$_{A,B}$ is represented in \mtt\ by the following proposition:
\[({\bf AC}_{A,B})\;(\forall x\in A)\,(\exists y\in B)\,\rho(x,y)\rightarrow (\exists f\in (\Pi x\in A)\,B)\,(\forall x\in A)\,\rho(x,\mathsf{Ap}(f,x))\]
Unfortunately a realizer $r$ for $(\forall x\in A)\,(\exists y\in B)\,\rho(x,y)$ cannot be turned into a recursive function from ${\cal J}(A)$ to ${\cal J}(B)$ respecting equivalence relations $\sim_{{\cal I}(A)}$ and $\sim_{{\cal I}(B)}$, as the interpretation of propositions is proof-irrelevant and we can have different elements $a$ and $a'$ of ${\cal J}(A)$ which are equivalent in ${\cal I}(A)$ for which $\pi_{1}(\{r\}(a))$ and $\pi_{1}(\{r\}(a'))$ are not equivalent in ${\cal I}(B)$.
This problem can be avoided if $A$ is a numerical set and in particular in the case of the set $N$. In this case the natural number $\Lambda r.\pair{\Lambda n.\pi_{1}(\{r\}(n))}{\Lambda n.\pi_{2}(\{r\}(n))}$ is a realizer for the axiom of choice {\bf AC}$_{N,B}$. So ${\cal R}\Vdash {\bf AC}_{N,B}$ for every $B$.

Moreover also the axiom of unique choice ${\bf AC}_{!}$ given by
\[({\bf AC}_{!})\;(\forall x\in A)\,(\exists ! y\in B)\,\rho(x,y)\rightarrow (\exists f\in (\Pi x\in A)\,B)\,(\forall x\in A)\,\rho(x,\mathsf{Ap}(f,x))\]
is validated by the model ${\cal R}$.\footnote{$(\exists ! x\in A)P(x)$ is defined as $(\exists x\in A)P(x)\wedge (\forall x\in A)(\forall x'\in A)(P(x)\wedge P(x')\rightarrow \mathsf{Id}(A,x,x'))$.} In fact if $\rho(x,y)$ is a proposition in context $[x\in A, y\in B]$, then 
in particular $\tar\,\vdash\,\forall x \forall x' \forall y\forall t\,(x\sim_{{\cal I}(A)}x'\,\wedge\, y\,\varepsilon\, {\cal J}(B)\,\wedge\, t\,\Vdash\, \rho(x,y)\rightarrow t\,\Vdash\, \rho(x',y))$.
This implies that we can easily choose a realizer for the axiom of unique choice.\\

\item 
%
If $\varphi$ is a formula of first-order arithmetic $\mathsf{HA}$, then we can define a proposition $\overline{\varphi}$ in $\mtt$, according to the following conditions:
\[
\begin{tabular}{lll}
 $\overline{\bot}$ is $\bot$ & $\overline{\varphi\wedge \varphi'}$ is $\overline{\varphi}\wedge \overline{\varphi'}$ &$\overline{\exists x\,\varphi}$ is $(\exists x\in N)\,\overline{\varphi}$\\
  $\overline{t=s}$ is $\mathsf{Id}(N,\overline{t},\overline{s})$ & $\overline{\varphi\vee \varphi'}$ is $\overline{\varphi}\vee \overline{\varphi'}$ & $\overline{\forall x\,\varphi}$ is $(\forall x\in N)\,\overline{\varphi}$ \\
  & $\overline{\varphi\rightarrow \varphi'}$ is $\overline{\varphi}\rightarrow \overline{\varphi'} $ & \\
\end{tabular}
\]
%
where $\overline{t}$ and $\overline{s}$ are the translations of terms of $\mathsf{HA}$ in $\mtt$ (in particular primitive recursive functions of $\mathsf{HA}$ are translated via $El_{N}$, $succ$ and $0$ are translated in the obvious corresponding ones and variables are interpreted as themselves\footnote{Here we suppose that variables of $\mathsf{HA}$ coincides with variables of the untyped syntax of $\mtts$.}). 
The language of $\mathsf{HA}$ can also be naturally interpreted in $\tar$ by using the fact that each primitive recursive function can be encoded by a numeral. If $t$ is a term of $\mathsf{HA}$ we will still write $t$ for its translation in $\tar$.
The following lemma is an immediate consequence of the definition of our realizability interpretation where  $\Vdash_{k}$ denotes Kleene realizability in $ \mathsf{HA} $ (see \cite{DT88}):\\
\begin{lemma}\label{realiz} If $t$ is a term of $\mathsf{HA}$ and $\varphi$ is a formula of $\mathsf{HA}$, then 
\begin{enumerate}
\item $\tar\,\vdash\,{\cal I}(\overline{t})=t$
\item $\tar\,\vdash\,n\,\Vdash_{k} \,\varphi \leftrightarrow n\,\Vdash\, \overline{\varphi}.$\\
\end{enumerate}
\end{lemma}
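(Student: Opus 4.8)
The plan is to prove the two statements by structural induction, the first on the term $t$ and the second on the formula $\varphi$, the second induction invoking the first in its base case.

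For item~1 the base cases are immediate: if $t$ is a variable then $\overline{t}$ is the same variable and ${\cal I}(x)=x$, and if $t$ is $0$ then ${\cal I}(\overline{0})={\cal I}(0)=0$. For $\mathsf{succ}$ we use ${\cal I}(\mathsf{succ}(s))=\{\mathbf{succ}\}({\cal I}(s))$ together with the choice of $\mathbf{succ}$ with $\{\mathbf{succ}\}(x)=succ(x)$ in $\tar$, so the claim reduces to the inductive hypothesis for $s$; composition of function symbols is handled by the inductive hypothesis plus the fact that ${\cal I}$ commutes with substitution of terms. The only real work is the case of a primitive recursive symbol $f$ given by $f(0,\vec y)=g(\vec y)$ and $f(succ(x),\vec y)=h(x,f(x,\vec y),\vec y)$, which is translated using $El_{N}$: here I would use the computation rules ${\cal I}(El_{N}(0,a,(y,z)b))\simeq{\cal I}(a)$ and ${\cal I}(El_{N}(\mathsf{succ}(n),a,(y,z)b))\simeq{\cal I}(b)[{\cal I}(n)/y,{\cal I}(El_{N}(n,a,(y,z)b))/z]$ listed after the definition of ${\cal I}$ on terms, so that ${\cal I}(\overline{f})$ satisfies in $\tar$, up to $\simeq$, exactly the recursion equations of the $\tar$-numeral encoding $f$; a single internal induction in $\tar$ on the recursion argument, using the uniqueness of the function determined by the primitive recursion scheme, then yields $\tar\vdash{\cal I}(\overline{f})=f$ under universal closure. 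This internal induction is legitimate precisely because item~1 asserts a $\tar$-provable equation, not a syntactic identity.

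For item~2 I would induct on $\varphi$, unfolding Definition~\ref{inteset} and the standard clauses of Kleene realizability for $\mathsf{HA}$ side by side. The step cases become trivial once one observes that ${\cal J}(N)$ is the class $\{n\mid n=n\}$, so the membership side conditions $p_{1}(n)\,\varepsilon\,{\cal J}(N)$ and $x\,\varepsilon\,{\cal J}(N)$ appearing in the $(\exists)$ and $(\forall)$ clauses are provably vacuous in $\tar$; after discarding them, the clauses for $\wedge$, $\vee$, $\rightarrow$, $\exists$, $\forall$ in the interpretation of $\overline{\varphi}$ read, letter for letter, as the clauses defining $n\,\Vdash_{k}\,\varphi$, and the inductive hypothesis closes each case, given that the pairing $\langle-,-\rangle$, the projections $p_{1},p_{2}$ and Kleene application fixed in Section~3.1 are those used in the definition of $\Vdash_{k}$. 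For the base cases, $\overline{\bot}$ is $\bot$ and both $n\,\Vdash\,\bot$ and $n\,\Vdash_{k}\,\bot$ abbreviate $\bot$; and if $\varphi$ is $t=s$ then $n\,\Vdash\,\overline{t=s}$ is $n\,\Vdash\,\mathsf{Id}(N,\overline{t},\overline{s})$, which by the $(\mathsf{Id})$ clause equals ${\cal I}(\overline{t})\sim_{{\cal I}(N)}{\cal I}(\overline{s})$, i.e.\ ${\cal I}(\overline{t})={\cal I}(\overline{s})$, which by item~1 is $\tar$-provably equivalent to $t=s$, exactly the meaning of $n\,\Vdash_{k}\,(t=s)$.

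I expect the main difficulty to be bookkeeping rather than conceptual: one must check that the arithmetical conventions of Section~3.1 (the pairing function, the code $\mathbf{rec}$, and the definedness predicate $\downarrow$ that is implicit in how the $(\rightarrow)$ and $(\forall)$ clauses are to be read in $\tar$) are arranged so that the translated clauses agree on the nose with those used for $\Vdash_{k}$ in $\mathsf{HA}$; once this is in place the lemma is, as stated, an immediate consequence of the definitions. The secondary minor point is the internal $\tar$-induction in item~1 for primitive recursive symbols, where one must take care that the recursion equations are available for all numerical arguments before concluding the pointwise equality of the two recursive codes.
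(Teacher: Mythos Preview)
Your proposal is correct and is precisely the unpacking of what the paper asserts: the paper gives no proof at all, stating only that the lemma ``is an immediate consequence of the definition of our realizability interpretation'' and referring to \cite{DT88} for Kleene realizability. Your two structural inductions, together with the observation that ${\cal J}(N)=\{n\mid n=n\}$ makes the quantifier side-conditions vacuous and that the $(\mathsf{Id})$ clause reduces to numerical equality via item~1, are exactly what that phrase amounts to; there is nothing further in the paper to compare against.
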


The \textbf{formal Church Thesis} {\bf CT} can be expressed in $\mtt$
  as the following proposition
\[({\bf CT})\,(\forall x\in N)\,(\exists y\in N)\,\rho(x,y)\rightarrow (\exists e\in N)\,(\forall x\in N)\,(\exists u\in N)\,(\overline{T(e,x,u)}\,\wedge\, \rho(x, \overline{U(u)})\] 
where $T$ and $U$ are the Kleene predicate and the primitive recursive function representing Kleene application in $\mathsf{HA}$. 
Note that the validity of {\bf CT} can be obtained by glueing ${\bf{AC}}_{N,N}$ together with the following restricted form of Church Thesis for type-theoretic functions:
\[({\bf CT}_{\lambda})\,(\forall f\in (\Pi x\in N)N)\,(\exists e\in N)\,(\forall x\in N)\,(\exists u\in N)\,(\overline{T(e,x,u)}\wedge\mathsf{Id}(N,\mathsf{Ap}(f,x),\overline{U(u)}))\]

We know by general results on Kleene realizability that there exists a numeral $\mathbf{r}$ for which $\mathsf{HA}\,\vdash\, \exists u\, T(f,x,u)\rightarrow (\{\mathbf{r}\}(f,x)\,\Vdash\, \exists u\,T(f,x,u)).$
Using this remark, the fact that $\{f\}(x)\downarrow$ is equivalent to $\exists u\, T(f,x,u)$ in $\tar$, the proof irrelevance and lemma \ref{realiz} we can show that ${\bf CT}_{\lambda}$ can be realized by 
$$\Lambda f.\langle f,\Lambda x.\langle\{\mathbf{p}_{1}\}(\{\mathbf{r}\}(f,x),\langle\{\mathbf{p}_{2}\}(\{\mathbf{r}\}(f,x),0\rangle\rangle\rangle.$$ 
In fact every function from $N$ to $N$ is interpreted in the model as a code for a total recursive function and we can send this code to itself in order to realize Church Thesis. \emph{Proof irrelevance allows to ignore the problem that different codes can give rise to extensionally equal functions, which is crucial to prove validity of} {\bf CT}.

We can conclude this section by stating the following consistency results:\\
\begin{theorem}
$\mtt$ is consistent with ${\bf CT}$.
\end{theorem}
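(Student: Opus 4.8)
The plan is to deduce this as an immediate corollary of the Validity Theorem, of the fact that ${\bf CT}$ is realized in $\cal R$ (established in the discussion of the consequences of the Validity Theorem), and of the consistency of the metatheory $\tar$.

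First I would fix the reading of the statement. Adding ${\bf CT}$ to $\mtt$ amounts to working under the context assumption $c\in{\bf CT}$, so "$\mtt$ is consistent with ${\bf CT}$" asserts that there is no closed term $t$ with $\mtt\vdash t\in\bot\ [c\in{\bf CT}]$. Since ${\bf CT}$ is parametric in a small propositional function $\rho\ prop_s\ [x\in N,y\in N]$ and any derivation invokes only finitely many instances of that schema, each of which is realized in $\cal R$, it suffices to refute $\mtt\vdash t\in\bot\ [c\in{\bf CT}]$ for a single fixed $\rho$ (the general case follows by taking a product of finitely many such assumptions). So fix $\rho$ and suppose, for a contradiction, that such a $t$ exists.

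Next I would apply the Validity Theorem. Since $\mtts$ extends $\mtt$, the judgement $t\in\bot\ [c\in{\bf CT}]$ is provable in $\mtts$, hence validated, so $\tar$ proves its interpretation, which by the interpretation of in-context term judgements is
\[\forall c\,\forall c'\,(c\sim_{{\cal I}({\bf CT})}c'\,\rightarrow\,{\cal I}(t)\sim_{{\cal I}(\bot)}{\cal I}(t)[c'/c]).\]
By clause $(\bot)$ of Definition \ref{inteset} the formula $n\,\Vdash\,\bot$ is literally $\bot$, and membership in ${\cal J}(\bot)$ coincides with it; hence ${\cal I}(t)\sim_{{\cal I}(\bot)}{\cal I}(t)[c'/c]$ is $\bot$, and the displayed formula reduces to $\tar\vdash\neg\exists c\,(c\sim_{{\cal I}({\bf CT})}c)$, that is (using the Remark after Definition \ref{inteset}, by which $c\,\varepsilon\,{\cal J}({\bf CT})$ is equivalent to $c\sim_{{\cal I}({\bf CT})}c$) $\tar\vdash\neg\exists c\,(c\,\varepsilon\,{\cal J}({\bf CT}))$. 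On the other hand, the analysis of the consequences of the Validity Theorem shows that ${\bf CT}$ is realized in $\cal R$ — it is obtained by glueing the realizer of ${\bf AC}_{N,N}$ with the explicit realizer of ${\bf CT}_{\lambda}$ — i.e.\ $\tar\vdash\exists c\,(c\,\varepsilon\,{\cal J}({\bf CT}))$. The two conclusions together yield $\tar\vdash\bot$.

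Finally I would invoke the consistency of $\tar$: it is Feferman's predicative fragment of second-order arithmetic obtained by adjoining fixed points for positive arithmetical operators, whose consistency is established in \cite{Fef}. This contradicts $\tar\vdash\bot$; hence no such $t$ exists and $\mtt+{\bf CT}$ is consistent. I do not expect a genuine obstacle here, since the substantive work — the Validity Theorem and the construction of the realizer of ${\bf CT}$ — has already been carried out; the only point requiring a little care is the formulation of "$\mtt$ consistent with ${\bf CT}$" as a statement about the non-provability of $\bot$ under the assumption $c\in{\bf CT}$, together with the bookkeeping of the schematic parameter $\rho$. Equivalently, the whole argument can be phrased semantically: $\cal R$ is, provably in $\tar$, a model of $\mtt+{\bf CT}$ in which $\bot$ is not inhabited, so an inconsistency of $\mtt+{\bf CT}$ would produce one of $\tar$.
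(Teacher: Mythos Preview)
Your proposal is correct and follows exactly the route the paper intends: the theorem is stated there without a separate proof, as an immediate consequence of the Validity Theorem together with the explicit realizability of ${\bf CT}$ in $\cal R$ (via ${\bf AC}_{N,N}$ and ${\bf CT}_{\lambda}$) and the consistency of $\tar$. Your write-up simply makes explicit the bookkeeping the paper leaves tacit --- the reading of ``consistent with ${\bf CT}$'' as non-derivability of an inhabitant of $\bot$ under the hypothesis $c\in{\bf CT}$, the handling of the schematic parameter $\rho$, and the unpacking of the interpretation of the in-context judgement --- all of which is sound.
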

\begin{corollary}
$\emtt$ is consistent with {\bf CT}.
\end{corollary}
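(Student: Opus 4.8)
The plan is to derive the corollary from the preceding Theorem, that $\mtt$ is consistent with {\bf CT}, by transporting the result along the interpretation of $\emtt$ into $\mtt$ constructed in \cite{m09}. Recall that in \cite{m09} the extensional level $\emtt$ is interpreted in the intensional level $\mtt$ through a quotient model of setoids \`a la Bishop (studied categorically in \cite{qu12,elqu,uxc}); this interpretation is sound, so that for every judgement $J$ with $\emtt\vdash J$ its translation $J^{*}$ satisfies $\mtt\vdash J^{*}$, and in particular it sends the false proposition of $\emtt$ to a proposition provably equivalent to $\bot$ in $\mtt$. Composing this interpretation with the realizability interpretation ${\cal R}$ of (the extension $\mtts$ of) $\mtt$ described above yields an interpretation of $\emtt$ inside $\tar$; note that since every $\mtt$-rule is an $\mtts$-rule, the Validity Theorem already makes ${\cal R}$ a model of $\mtt$.

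The key point, recorded in \cite{m09}, is that the setoid interpretation carries the $\emtt$-formulation of {\bf CT} to an $\mtt$-proposition that is provably equivalent in $\mtt$ to the $\mtt$-formulation of {\bf CT} discussed in the previous subsection. By the consequences of the Validity Theorem, ${\cal R}$ realizes the $\mtt$-version of {\bf CT} (obtained by glueing ${\bf CT}_{\lambda}$ with ${\bf AC}_{N,N}$); hence it also realizes the image under the setoid interpretation of the $\emtt$-version of {\bf CT}. Therefore the composite interpretation validates every theorem of the theory $\emtt+{\bf CT}$, i.e.\ it is a model of $\emtt+{\bf CT}$ formalized inside $\tar$.

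Consistency then follows: since $\tar$, being a subsystem of classical second-order arithmetic, is consistent, so is $\emtt+{\bf CT}$; equivalently, were $\emtt+{\bf CT}\vdash\bot$, then by soundness of the interpretation of $\emtt$ into $\mtt$ together with the equivalence of the translated {\bf CT} with the intensional {\bf CT} we would get $\mtt+{\bf CT}\vdash\bot$, whence by the Validity Theorem $\tar\vdash\exists r\,(r\Vdash\bot)$, i.e.\ $\tar\vdash\bot$, contradicting the preceding Theorem. The only non-routine ingredient is the comparison between the extensional and intensional formulations of {\bf CT} under the setoid interpretation, but this is precisely the result of \cite{m09} we are entitled to invoke; everything else is a composition of sound interpretations, so I expect no genuine obstacle beyond bookkeeping.
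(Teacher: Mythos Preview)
Your proposal is correct and follows essentially the same approach as the paper: both invoke the interpretation of \emtt\ in \mtt\ from \cite{m09}, the fact that under that interpretation {\bf CT} is carried to a proposition equivalent to {\bf CT} itself, and then compose with the realizability model of \mtt+{\bf CT} already constructed. The paper's proof is just the two-sentence version of your argument; your additional remarks on the consistency of $\tar$ and the explicit contradiction chasing are sound but go slightly beyond what the paper records.
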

\begin{proof} According to the interpretation of \emtt\ in \mtt\ in \cite{m09}, the interpretation of {\bf CT} turns now to be equivalent to {\bf CT} itself. Therefore a model showing consistency of \mtt\ with {\bf CT} can be extended to a model of \emtt\ with {\bf CT}.
\end{proof}
\end{enumerate}

\section{Conclusions}
As explained in the introduction,
the semantics built here is the best Kleene realizability model
we can construct for 
the extensional level \emtt\ of the Minimalist Foundation, since \emtt\ validates Extensionality Equality of Functions and it is constructively incompatible with the Axiom of Choice on generic sets (see \cite{m09}), which is instead valid in Beeson's model. In our semantics
instances of the axiom of choice are still valid only on numerical
sets, which include the interpretation of basic intensional types as the set of natural numbers.

On the contrary, for the intensional level \mtt\ of the Minimalist Foundation  we 
hope to build a more intensional realizability semantics \`a la Kleene
where we validate not only  {\bf CT}  but also the Axiom of Choice {\bf AC} on generic types.
Recalling from \cite{m09} that our \mtt\ can be naturally interpreted in
Martin-L{\"o}f's type theory with one
universe, such an intensional Kleene realizability for \mtt\ could
be obtained by modelling intensional Martin-L{\"o}f's type theory with one
universe (with explicit substitutions in place of the usual substitution term equality rules) together with  {\bf CT}. However, as far as we know,  the consistency of intensional Martin-L{\"o}f's type theory  with {\bf CT} is still an open problem.
\\

{\bf Acknowledgements.}
We acknowledge many useful fruitful discussions with Takako Nemoto on realizability
models for Martin-L{\"o}f's type theory during her visits to our department.
We also thank Laura Crosilla,  Giovanni Sambin and Thomas Streicher for
other interesting fruitful discussions on topics of this paper.
We are grateful to Ferruccio Guidi for its constant help with typesetting.

\bibliography{bibliopsp}

\section{Appendix: The typed calculus \mtt\ }
\label{mttsyn}
 We present here the inference rules to build types
 in \mtt. 
The inference rules involve judgements  written in the style of Martin-L{\"o}f's type theory
\cite{ML84,PMTT} that may be of the form:\\

{\small  $$A \ type \ [\Gamma] \hspace{.5cm} A=B\ type\ [\Gamma] 
\hspace{.5cm} a \in A\ 
 [\Gamma] \hspace{.5cm} a=b \in A\ [\Gamma] $$}
where types include collections, sets, propositions and small propositions,
namely 
{\small $$type \in \{ col, set,prop,prop_s\, \}$$}

For easiness, the piece of context common to all judgements
involved in a rule
is omitted and
 typed variables appearing in a context
are meant to be added to the implicit
context as the last one.

\noindent
 Note that to write
 the elimination constructors of our types
 we adopt the higher-order syntax  in
\cite{PMTT}~\footnote{For example, note that the elimination constructor of
disjunction ${\bf \it El}_{\vee}(w,a_{B},a_{C})$  binds the open terms
$a_{B}(x)\in A\ [x\in B]$ and 
  $      a_{C}(y)\in A \ [y\in C]$.
Indeed, given that they are needed in the disjunction conversion rules,
it follows that these open terms
 must be  encoded into 
the elimination constructor.
To encode them we use the higher-order syntax as  in \cite{PMTT} (see also \cite{gui}).
According to this syntax
the open term $a_B(x)\in A\ [x\in B]$ yields to  $(x\in B)\,  a_B(x)$
of higher type $(x\in B)\, A$. Then, by $\eta$-conversion among higher types,
 it follows that  $(x\in B)\,  a_B(x)$ is equal to $a_B$. Hence, we often simply write the short expression
$a_B$ to recall the open term  where it comes from.}.

\noindent
We also have a form of judgement to build contexts:
{\small $$\Gamma\ cont$$}

\noindent
whose rules are the following

{\small
$$ \begin{array}{ll}
\emptyset\ cont\qquad  &\qquad \mbox{F-c}\displaystyle{
 \frac{\displaystyle\  A\ type\  [\Gamma]\ }
{\displaystyle \ \Gamma, x\in A \ cont\ }}\  (x \in A \not\in \Gamma)
\end{array}
$$}

\noindent 
Then, the first rule to build elements of type is 
the assumption of variables:

{\small
$$
\mbox{ var) }
\displaystyle{\frac{\displaystyle\  \Gamma, x\in A, \Delta \hspace{.3cm}\ cont\ }{\displaystyle\  x\in A\ [ \Gamma,x\in A , \Delta]\ }}
$$
}

\noindent
Among types there are the following embeddings: sets are collections and propositions are collections
\\
 
\noindent
{\small
$\begin{array}{l}
\mbox{\bf set-into-col) }\ \ \displaystyle{ \frac
       {\displaystyle\  A
        \hspace{.1cm} set\ }
{ \displaystyle\  A
        \hspace{.1cm} col\ }}
\end{array}
\qquad\qquad
\begin{array}{l}
    \mbox{\bf prop-into-col) }\ \ 
\displaystyle{ \frac
       {\displaystyle\  A\ prop\  }
      { \displaystyle\  A\ col  \ }}
\end{array}
$}
\\





\noindent
Moreover, collections are closed under strong indexed sums:\\

\noindent
{\small
$\begin{array}{l}
      \mbox{ \bf Strong Indexed Sum } \\[10pt]
      \mbox{F-}\Sigma ) \ \
\displaystyle{ \frac{\displaystyle   C(x)
         \hspace{.1cm} \ col \ [x\in B]}
         {\displaystyle \Sigma_{x\in B} C(x)\hspace{.1cm} col }}
         \qquad
      \mbox{I-}\Sigma )\ \
\displaystyle{ \frac
         {\displaystyle b\in  B \hspace{.3cm} c\in C(b)\qquad C(x)\ col\ [x\in B]}
         {\displaystyle \langle b,c\rangle\in  \Sigma_{x\in B} C(x)}}
\\[15pt]
\mbox{E-}\Sigma )\ \
\displaystyle{ \frac
         {\displaystyle \begin{array}{l}
M(z)\ col \ [ z\in \Sigma_{x\in B} C(x)]\\
d\in  \Sigma_{x\in B} C(x) \hspace{.3cm}
 m(x,y)\in M(\langle x, y \rangle)\ [x\in B,
         y\in C(x)]
\end{array}}
      {\displaystyle {\it  El}_{\Sigma}(d,m)\in  M(d)}}
      \\[15pt]
\mbox{C-}\Sigma ) \ \
\displaystyle{ \frac
         {\displaystyle \begin{array}{l}
M(z)\ col \ [ z\in \Sigma_{x\in B} C(x)]\\
b\in B\ \ \  c\in C(b)\hspace{.3cm} m(x,y)\in M(\langle x,y \rangle)\ [x\in B,
         y\in C(x)]\end{array}}
      {\displaystyle {\bf \it El}_{\Sigma}(\, \langle b, c\rangle
,m\, )=m(b,c)\in M(\langle b,c\rangle)} }
      \end{array}$
\\
\\

}

\noindent
Sets are generated  as follows:
\\
\\

\noindent
{\small
$\begin{array}{l}
      \mbox{ \bf Empty set} \\
      \mbox{ F-Em)}\ \ \mathsf{N_0} \hspace{.1cm} set \qquad
\mbox{ E-Em)}\ \
\displaystyle{ \frac
         {\displaystyle a\in  \mathsf{N_0} \hspace{.3cm} A(x)\hspace{.1cm} col \
[x\in \mathsf{N_0}] }
         {\displaystyle \orig{emp_{o}}(a)\in A(a)}}
      \end{array}$
\\
\\

\noindent
$\begin{array}{l}
\mbox {\bf Singleton}\\
 \mbox{\small S)}\ \orig{\mathsf{N_1}} \hspace{.1cm} set
 \qquad
 \mbox{\small I-S)}\ 
 \orig{\star} \in\orig{\mathsf{N_1}}
 \qquad 
 \mbox{\small E-S)}\
\displaystyle{  \frac
    {t\in \orig{\mathsf{N_1}}\quad  M(z)\ col\ 
[z\in \mathsf{\mathsf{N_1}}] \quad c\in M(\star)}
    {{ \it El}_{ \orig{\mathsf{N_1}} }(t,c)\in  M(t)}}
\qquad 
 \mbox{\small C-S)}\
\displaystyle{  \frac
    { M(z)\ col\ 
[z\in \mathsf{\mathsf{N_1}}] \quad c\in M(\star)}
    {{ \it El}_{ \orig{\mathsf{N_1}} }(\star,c)=c\in  M(\star)}}
 \end{array}$
\\
\\

\noindent
$\begin{array}{l}
      \mbox{ \bf Strong Indexed Sum set } \\[10pt]
      \mbox{F-}\Sigma_s ) \ \
\displaystyle{ \frac{\displaystyle   C(x)
         \hspace{.1cm} set\ \ [x\in B]\qquad B\ set}
         {\displaystyle \Sigma_{x\in B} C(x)\hspace{.1cm} set }}
      \end{array}$
\\
\\

\noindent
      $\begin{array}{l}
\mbox{\bf List set} \\[10pt]
      \mbox{F-list)}\
\displaystyle{ \frac
         { \displaystyle C \hspace{.1cm} set}
         {\displaystyle List(C) \hspace{.1cm} set }}
      \qquad
       \mbox{${\rm I}_{1}$-list)}\ \
\displaystyle{ \frac
         {\displaystyle \quad List(C) \hspace{.1cm} set\  }
         {\epsilon \in List(C)}}
      \qquad
      \mbox{${\rm I}_{2}$-list)}\ \
\displaystyle{ \frac
         {\displaystyle s\in List(C) \hspace{.3cm} c\in  C}
         {\displaystyle \orig{cons}(s,c)\in List(C)}}
      \end{array}$
\\
\\

\noindent
$\begin{array}{l}
\mbox{E-list)}\ \
\displaystyle{ \frac
         {\displaystyle \begin{array}{l}
L(z)\ col\  [z\in List(C)]
\hspace{.3cm}s\in List(C) \hspace{.3cm}\qquad  a\in L(\epsilon)\hspace{.3cm}\\
    l(x,y,z)\in
         L(\orig{cons}(x,y))  \ [x\in List(C),y\in C, z\in L(x)]
\end{array}}
      {\displaystyle {\bf \it El}_{List}(s,a, l)\in  L(s)}}
      \end{array}$
\\
\\

\noindent
$\begin{array}{l}
\mbox{${\rm C}_{1}$-list)}\ \
\displaystyle{  \frac
         {\displaystyle\begin{array}{l}
L(z)\ col\  [z\in List(C)] \hspace{.3cm}\qquad
 a\in L(\epsilon)\hspace{.3cm}\\
     l(x,y,z)\in
         L(\orig{cons}(x,y))  \ [x\in List(C),y\in C, z\in L(x)]
\end{array}}
      {\displaystyle {\bf \it El}_{List}( \epsilon, a,l)=a\in  L(\epsilon)}}
      \\[15pt]
\mbox{ ${\rm C}_{2}$-list)}\ \
\displaystyle{ \frac
         {\displaystyle\begin{array}{l}
L(z)\ col\  [z\in List(C)]
\hspace{.3cm} s\in List(C) \hspace{.3cm}c\in C \hspace{.3cm} a\in
L(\epsilon)\hspace{.3cm} \\
l(x,y,z)\in
         L(\orig{cons}(x,y))  \ [x\in List(C),y\in C, z\in L(x)]
\end{array}}
      {\displaystyle {\bf \it El}_{List}(\orig{cons}(s,c),a,l)=l(s,
c,{\bf \it El}_{List}(s,a,
      l))\in  L(\orig{cons}(s,c))}}
\end{array}$
\\
\\

\noindent
      $\begin{array}{l}
      \mbox{\bf Disjoint Sum set } \\[10pt]
      \mbox{F-+)} \ \
\displaystyle{  \frac
         { \displaystyle B \hspace{.1cm} set \hspace{.3cm}C
         \hspace{.1cm} set}
         {\displaystyle B+ C \hspace{.1cm} set }}
      \qquad
      \mbox{${\rm I}_{1}$-}+ ) \ \
\displaystyle{  \frac
         {\displaystyle b\in  B\qquad B\ set \qquad C\  set }
         {\displaystyle\orig{inl}(b)\in B+ C}}
      \qquad
\mbox{${\rm I}_{2}$-}+ ) \ \
\displaystyle{  \frac
         {\displaystyle c\in  C \qquad B\ set\qquad C\ set}
         {\displaystyle  \orig{inr}(c)\in B+ C}}
      \\[15pt]
\mbox{E-} + ) \ \
\displaystyle{  \frac
         {\displaystyle\begin{array}{l}
A(z)\ col\  [z\in B+C]\\
      w\in B+ C \hspace{.3cm}
      a_{B}(x)\in A(\orig{inl}(x))\ [x\in B]
      \hspace{.3cm}\
      a_{C}(y)\in A(\orig{inr}(y))\ [y\in C]
\end{array}}
         {\displaystyle { \it El}_{+}(w,a_{B},a_{C})\in A(w)}}
      \\[15pt]
      \mbox{${\rm C}_{1}$-}+ ) \ \
\displaystyle{ \frac
       {\displaystyle \begin{array}{l}
A(z)\ col\  [z\in B+C]\\
b\in B \hspace{.3cm}  a_{B}(x)\in A(\orig{inl}(x))\ [x\in B]
      \hspace{.3cm}\
      a_{C}(y)\in A(\orig{inr}(y))\ [y\in C]\end{array}}
      {\displaystyle { \it El}_{+}(\orig{inl}(b),a_{B},a_{C})=a_{B}(b)
\in A(\orig{inl}(c))}}
\\[15pt]
      \mbox{${\rm C}_{2}$-}+ ) \ \
\displaystyle{ \frac
       {\displaystyle\begin{array}{l}
A(z)\ col\  [z\in B+C]\\
      c\in C\hspace{.3cm} a_{B}(x)\in A(\orig{inl}(x))\ [x\in B]
      \hspace{.3cm}\
      a_{C}(y)\in A(\orig{inr}(y))\ [y\in C]\end{array}}
      {\displaystyle {\bf \it El}_{+}(\orig{inr}(c),
a_{B},a_{C})=a_{C}(c)\in A(\orig{inr}(c))}}
      \end{array}$
\\
\\

\noindent
      $\begin{array}{l}
\mbox{\bf Dependent Product set}
\\[10pt]
\mbox{F-$\Pi$)}\ \
\displaystyle{\frac{ \displaystyle  C(x)
        \hspace{.1cm} set\ [x\in B] \qquad B\ set }
{\displaystyle \Pi_{x\in B} C(x)\hspace{.1cm} set }}
 \qquad
 \mbox{ I-$\Pi$)}\ \
 \displaystyle{\frac{ \displaystyle c(x)\in  C(x)\ [x\in B] \qquad C(x)
        \hspace{.1cm} set\ [x\in B] \qquad B\ set }
 { \displaystyle \lambda x^{B}.c(x)\in \Pi_{x\in B} C(x)}}
       \\[15pt]
 \mbox{  E-$\Pi$)}\ \
 \displaystyle{\frac{ \displaystyle
 b\in B \hspace{.3cm} f\in \Pi_{x\in B} C(x) }
 {\displaystyle \mathsf{Ap}(f,b)\in C(b) }}
\\[15pt]
 \mbox{$\mathbf{\beta}$C-$\Pi$)}\ \
 \displaystyle{\frac{ \displaystyle b\in B \hspace{.3cm} 
c(x)\in  C(x)\ [x\in B]\qquad C(x)
        \hspace{.1cm} set\ [x\in B] \qquad B\ set}
 {\displaystyle \mathsf{Ap}(\lambda x^{B}.c(x),b)=c(b)\in   C(b) }}
      \end{array}
$}
\\
\\



\noindent
Propositions are generated as follows:
\\

\noindent {\small
      $\begin{array}{l}
       {\bf Falsum} \\
      \mbox{ F-Fs)}\ \ \bot \hspace{.1cm} prop \qquad
\mbox{E-Fs)}\ \
\displaystyle{ \frac
         {\displaystyle a\in  \bot \hspace{.3cm} \phi\hspace{.1cm} prop }
         { \displaystyle \orig{r_{o}}(a)\in \phi}}
      \end{array}$
\\
\\

\noindent
      $\begin{array}{l}
\mbox{\bf Disjunction}
      \\[10pt]
      \mbox{F-}\vee )\ \
\displaystyle{ \frac
         { \displaystyle \psi \hspace{.1cm} prop \hspace{.3cm}\alpha
         \hspace{.1cm} prop}
         {\displaystyle \psi\vee \alpha \hspace{.1cm} prop }}
      \quad
      \mbox{${\rm I}_{1}$-}\vee) \ \
\displaystyle{ \frac
         {\displaystyle b\in  \psi\qquad \psi\ prop\qquad \alpha \ prop}
         {\displaystyle \mathsf{inl}_{\vee}(b)\in \psi\vee \alpha}}
      \quad
\mbox{${\rm I}_{2}$-}\vee ) \ \
\displaystyle{ \frac
         {\displaystyle c\in  \alpha \qquad \psi\ prop\qquad \alpha\ prop}
         {\displaystyle \orig{inr}_{\vee}(c)\in \psi\vee \alpha}}
\end{array}$\\[15pt]
$\begin{array}{l}
\mbox{E-} \vee ) \ \
\displaystyle{ \frac
         {\displaystyle \begin{array}{l}
\phi\  prop\ \\
      w\in \psi\vee \alpha \hspace{.3cm}
      a_{\psi}(x)\in \phi\ [x\in \psi]
      \hspace{.3cm}\
      a_{\alpha}(y)\in \phi \ [y\in \alpha]\end{array}}
         {\displaystyle {\it El}_{\vee}(w,a_{\psi},a_{\alpha})\in \phi}}
 \\[15pt]
      \mbox{${\rm C}_{1}$-}\vee ) \ \
\displaystyle{ \frac
       {\displaystyle \begin{array}{l}
\phi\  prop\qquad \psi\ prop\qquad \alpha\ prop\\
b\in \psi \hspace{.3cm}  a_{\psi}(x)\in \phi\ [x\in \psi]
      \hspace{.3cm}\
      a_{\alpha}(y)\in \phi \ [y\in \alpha]
\end{array}}
      { \displaystyle {\bf \it El}_{\vee}
(\orig{inl}_{\vee}(b),a_{\psi},a_{\alpha})=a_{\psi}(b)\in \phi}}
\\[15pt]
      \mbox{${\rm C}_{2}$-}\vee ) \ \
\displaystyle{ \frac
       {\displaystyle \begin{array}{l}
\phi\  prop\   \qquad \psi\ prop\qquad \alpha\ prop  \\
c\in \alpha\hspace{.3cm} a_{\psi}(x)\in \phi\ [x\in \psi]
      \hspace{.3cm}\
      a_{\alpha}(y)\in \phi \ [y\in \alpha]\end{array}}
      {\displaystyle {\bf \it El}_{\vee}
(\orig{inr}_{\vee}(c),a_{\psi},a_{\alpha})=a_{\alpha}(c)\in \phi}}
\end{array}
y$
\\
\\

\noindent
$\begin{array}{l}
\mbox{\bf Conjunction}
\\[10pt]
      \mbox{F-}\wedge) \ \
\displaystyle{ \frac
         { \displaystyle  \psi   \hspace{.1cm} prop\qquad  \alpha
         \hspace{.1cm} prop }
         {\displaystyle  \psi\wedge \alpha\hspace{.1cm} prop }}
         \qquad
      \mbox{I-}\wedge )\ \
\displaystyle{ \frac
         {\displaystyle b\in  \psi \hspace{.3cm} c\in \alpha\qquad \psi\ prop\qquad \alpha\ prop}
         {\displaystyle \langle b,_{\wedge} c \rangle\in \psi\wedge \alpha  }}
\\[15pt]
\mbox{$\mathbf{\textrm{ E}_{1}}$-$\wedge$) }\ \
\displaystyle{ \frac
       {\displaystyle d\in  \psi \wedge \alpha}
{\displaystyle \pi_{1}^{\psi}(d)\in  \psi}}
\qquad \qquad
\mbox{$\mathbf{\textrm{ E}_{2}}$-$\wedge$) }\ \
\displaystyle{ \frac
       {\displaystyle d\in  \psi \wedge \alpha}
{\displaystyle \pi_{2}^{\alpha}(d)\in  \alpha}}
\\[15pt]
\mbox{$\beta_{1}$ C-$ \wedge$) }\ \
\displaystyle{ \frac
       {\displaystyle b\in  \psi \hspace{.3cm} c\in \alpha\qquad \psi\ prop\qquad \alpha\ prop}
{\pi_{1}^{\psi}(\langle b,_{\wedge} c \rangle)=b\in \psi}}
\qquad\qquad
\mbox{$\beta_{2}$ C-$\wedge$) }\displaystyle{ \frac
       {\displaystyle b\in  \psi \hspace{.3cm} c\in \alpha\qquad \psi\ prop\qquad \alpha\ prop}
{\displaystyle \pi_{2}^{\alpha}(\langle b,_{\wedge} c \rangle)=c\in \alpha}}
\end{array}
$
\\
\\

\noindent
$
\begin{array}{l}
\mbox{\bf Implication}
\\[10pt]
\mbox{F-$\rightarrow$) }\ \
\displaystyle{ \frac
       {\displaystyle \psi
        \hspace{.1cm} prop \qquad \alpha
        \hspace{.1cm} prop  }
{\psi \rightarrow \alpha \hspace{.1cm} prop }}
\\[15pt]
\mbox{I-$\rightarrow$) }\ \
\displaystyle{ \frac
{\displaystyle
      c(x)\in  \alpha\ [x\in \psi] \qquad \psi \hspace{.1cm} prop\qquad \alpha\ prop\  }
{\displaystyle \lambda_\rightarrow x^{\psi}.c(x)\in \psi \rightarrow \alpha}}
        \qquad
\mbox{ E-$\rightarrow$) }\ \
\displaystyle{ \frac
{\displaystyle b\in \psi \hspace{.3cm} f\in \psi\rightarrow \alpha }
{\displaystyle \mathsf{Ap}_\rightarrow(f,b)\in \alpha }}
       \\[15pt]
\mbox{$\mathbf{\beta}$C-$\rightarrow$) }\ \
\displaystyle{ \frac
{\displaystyle b\in \psi \hspace{.3cm} c(x)\in  \alpha
 \
[x\in \psi]\qquad \psi \hspace{.1cm} prop\qquad \alpha\ prop}
{ \displaystyle \mathsf{Ap}_\rightarrow
(\lambda_\rightarrow x^{\psi}.c(x),b)=c(b)\in   \alpha }}
      \end{array}$
\\
\\

\noindent
      $\begin{array}{l}
      \mbox{\bf Existential quantification}
      \\[10pt]
      \mbox{F-}\exists ) \ \
\displaystyle{ \frac
         { \displaystyle   \alpha(x)
         \hspace{.1cm} prop \ \ [x\in \psi]}
         {\displaystyle  \exists_{x\in B} \alpha(x)\hspace{.1cm} prop }}
         \qquad
      \mbox{I-}\exists )\ \
\displaystyle{ \frac
         {\displaystyle b\in  B \hspace{.3cm} c\in \alpha(b)\qquad \alpha(x)\ prop\ [x\in B]}
         {\displaystyle \langle b,_{\exists} c \rangle\in  \exists_{x\in B} \alpha(x)}}
      \\[15pt]
\mbox{E-}\exists )\ \
\displaystyle{  \frac
         {\displaystyle \begin{array}{l}
\phi\ prop\ \\
d\in  \exists_{x\in B} \alpha(x) \hspace{.3cm} m(x,y)\in \phi\ [x\in B,
         y\in \alpha(x)]
\end{array}}
      {\displaystyle { \it El}_{\exists}(d,m)\in  \phi}}
      \\[15pt]
\mbox{C-}\exists ) \ \
\displaystyle{ \frac
         {\displaystyle \begin{array}{l}
\phi\ prop\qquad \alpha(x)\ prop\ [x\in B]\\
b\in B\ \ \  c\in \alpha(b)\hspace{.3cm} m(x,y)\in \phi\ [x\in B,
         y\in \alpha(x)]
\end{array}}
      {\displaystyle { \it El}_{\exists}(\langle b,_{\exists}c \rangle,m)=m(b,c)\in M} }
      \end{array}$
\\
\\

\noindent
      $\begin{array}{ll}
\mbox{\bf Universal quantification}
\\[10pt]
\mbox{F-$\forall$) }\ \
\displaystyle{ \frac
{\displaystyle \alpha(x)\hspace{.1cm} prop\ [x\in B]  }
{\displaystyle \forall_{x\in B} \alpha(x)\hspace{.1cm} prop }}
&
\mbox{I-$\forall$) }\ \
\displaystyle{ \frac
{\displaystyle c(x)\in  \alpha(x)\ [x\in B]\qquad  \alpha(x)\hspace{.1cm} prop\ [x\in B]  }
{\lambda_\forall x^{B}.c(x)\in \forall_{x\in B} \alpha(x)}}
        \\[15pt]
\mbox{E-$\forall$) }\ \
\displaystyle{ \frac
{\displaystyle b\in B \hspace{.3cm} f\in \forall_{x\in B} \alpha(x) }
{\displaystyle \mathsf{Ap}_{\forall}(f,b)\in \alpha(b) }}
       &
\mbox{$\mathbf{\beta}$C-$\forall$) }\ \
\displaystyle{ \frac
{\displaystyle b\in B \hspace{.3cm} c(x)\in  \alpha(x)\ [x\in B]\qquad
 \alpha(x)\ prop\  [x\in B]
}
{\displaystyle \mathsf{Ap}_{\forall}(\lambda_\forall
      x^{B}.c(x),b)=c(b)\in   \alpha(b) }}
      \end{array}$
\\
\\

\noindent
      $\begin{array}{l}
\mbox{\bf Propositional Equality}
      \\[15pt]
      \mbox{F-Id)}\ \
\displaystyle{ \frac
{\displaystyle  A \hspace{.1cm} col \hspace{.3cm}  a\in A \hspace{.3cm} b\in A}
         {\displaystyle \orig{Id}(A, a, b)\hspace{.1cm} prop }}
      \qquad
      \mbox{I-Id)}\ \
\displaystyle{ \frac
{\displaystyle  a \in A}
         {\displaystyle \orig{id_{A}}(a)\in \orig{Id}(A, a, a)}}
\\[15pt]
      \mbox{E-Id)}\ \
      \displaystyle{\frac
       {\displaystyle \begin{array}{l}\alpha(x,y)\ prop \ [x:A,y\in A]\\
a\in A\quad b\in A\quad p\in \orig{Id}(A, a, b)
\quad c(x)\in \alpha(x,x)
\ [x\in A]\end{array}}
      {\displaystyle {\it El}_{\mathsf{Id}}(p,(x)c(x))\in \alpha(a,b) }}
      \\[15pt]
       \mbox{C-Id)} \ \
\displaystyle{\frac
       {\displaystyle \begin{array}{l}\alpha(x,y)\ prop \ [x:A,y\in A]\\
a\in A
\quad c(x)\in \alpha(x,x)
\ [x\in A]\end{array}}
      {\displaystyle {\bf \it El}_{\mathsf{Id}}(\mathsf{id}_A(a),(x)c(x))=c(a)
\in \alpha(a,a) }}
      \end{array}$
\\
\\
}

\noindent
Then, we also have  the  collection of small propositions:
\\

\noindent
{\small
$ 
\begin{array}{l}
 \mbox{\bf Collection of small propositions}\\[5pt]
\mbox{F-Pr) } \ \displaystyle{\mathsf{prop_s}\  col }\qquad 
\end{array}
$}
\\
\\

\noindent
The collection of small propositions containes codes
of  small propositions
\\

\noindent
$
\mbox{T-Pr) }\displaystyle{ \frac
       {\displaystyle p\in
        \ \mathsf{prop_s} }
{ T(p) \ prop_s }}
$
\\
\\

\noindent
which are generated as follows:
\\

{\small
$\begin{array}{l} 
\mbox{Pr$_1$) }\widehat{\bot} \in \mathsf{prop_s}\qquad
\mbox{Pr$_2$) }\displaystyle{ \frac
         { \displaystyle  p\in  \mathsf{prop_s} \hspace{.3cm} q \in
         \ \mathsf{prop_s}}
         {\displaystyle p \widehat{\vee} q\in \mathsf{prop_s} }}\\[15pt]
\mbox{Pr$_3$) }\displaystyle{ \frac
       {\displaystyle p\in
        \ \mathsf{prop_s} \qquad q\in
        \ \mathsf{prop_s} }
{p\widehat{\rightarrow} q \in \mathsf{prop_s} }}\qquad
\mbox{Pr$_4$) }\displaystyle{ \frac
         { \displaystyle  p   \in \mathsf{prop_s}\qquad  q\in
           \mathsf{prop_s} }
         {\displaystyle  p\widehat{\wedge} q\in  \mathsf{prop_s} }}
\\[15pt]
\mbox{Pr$_5$) }\displaystyle{ \frac
         { \displaystyle   p(x)
         \ \mathsf{prop_s}\ \ [x\in B]\qquad B\  set}
         {\displaystyle  \widehat{\exists_{x\in B}} p(x)\in
\mathsf{prop_s}}}\qquad
\mbox{Pr$_6$) }\displaystyle{ \frac
{\displaystyle p(x) \in \mathsf{prop_s}\ [x\in B]\qquad B\ set  }
{\displaystyle \widehat{\forall_{x\in B}} p(x) \in \mathsf{prop_s} }}
\\[15pt]
\mbox{Pr$_7$) }\displaystyle{ \frac
{\displaystyle  A \hspace{.1cm} set \hspace{.3cm}  a\in A \hspace{.3cm} b\in A}
         {\displaystyle \widehat{\orig{Id}}(A, a, b) \in \mathsf{prop_s} }}
\end{array}
$}
\\
\\

\noindent
with the following definitional equalities:
\\

{\small
$\begin{array}{l} 
\mbox{eq-Pr$_1$) }T(\widehat{\bot})= \bot \qquad
\mbox{eq-Pr$_2$) }\displaystyle{ \frac
         { \displaystyle  p\in  \mathsf{prop_s} \hspace{.3cm} q \in
         \ \mathsf{prop_s}}
         {\displaystyle T( p \widehat{\vee} q)= T(p) \vee T(q)}}\\[15pt]
\mbox{eq-Pr$_3$) }\displaystyle{ \frac
       {\displaystyle p\in
        \ \mathsf{prop_s} \qquad q\in
        \ \mathsf{prop_s} }
{T(p\widehat{\rightarrow} q)= T(p)\rightarrow T(q)  }}\qquad
\mbox{eq-Pr$_4$) }\displaystyle{ \frac
         { \displaystyle  p   \in \mathsf{prop_s}\qquad  q\in
           \mathsf{prop_s} }
         {\displaystyle  T(p\widehat{\wedge} q)=T(p) \wedge T(q)   }}
\\[15pt]
\mbox{eq-Pr$_5$) }\displaystyle{ \frac
         { \displaystyle   p(x)
         \ \mathsf{prop_s}\ \ [x\in B]\qquad B\  set}
         {\displaystyle T( \widehat{\exists_{x\in B}} p(x))=
\exists_{x\in B}\ T(p(x)) }}\qquad
\mbox{eq-Pr$_6$) }\displaystyle{ \frac
{\displaystyle p(x) \in \mathsf{prop_s}\ [x\in B]\qquad B\ set  }
{\displaystyle T(\widehat{\forall_{x\in B}} p(x))= \forall_{x\in B} T(p(x)) }}
\\[15pt]
\mbox{eq-Pr$_7$) }\displaystyle{ \frac
{\displaystyle  A \hspace{.1cm} set \hspace{.3cm}  a\in A \hspace{.3cm} b\in A}
         {\displaystyle T(\, \widehat{\orig{Id}}(A, a, b)\,)=
\orig{Id}(A,a,b)    }}
\end{array}
$}
\\
\\

 \noindent
Then, we also have function collections from a set  toward
the  collection of small propositions:
\\

 {\small
       $\begin{array}{ll}
 \mbox{\bf Function collection to $\mathsf{prop_s}$ }
 \\[10pt]
 \mbox{ F-Fun)}\ 
 \displaystyle{\frac{ \displaystyle B\ set\  }
 {\displaystyle B\rightarrow \mathsf{prop_s} \hspace{.1cm} col }}
 &
 \mbox{ I-Fun)}\ \
 \displaystyle{\frac{ \displaystyle c(x)\in \mathsf{prop_s}\ [x\in B]
     \qquad B\ set }
 { \displaystyle \lambda x^{B}.c(x)\in  B\rightarrow \mathsf{prop_s}}}
       \\[15pt]
 \mbox{  E-Fun)}\ \
 \displaystyle{\frac{ \displaystyle
 b\in B \hspace{.3cm} f\in  B\rightarrow \mathsf{prop_s} }
 {\displaystyle \mathsf{Ap}(f, b)\in \mathsf{prop_s}  }}
 &
 \mbox{$\mathbf{\beta}$C-Fun)}\ \
 \displaystyle{\frac{ \displaystyle b\in B \hspace{.3cm} c(x)\in\mathsf{prop_s}
 \  [x\in B]  \qquad B\ set }
 {\displaystyle  \mathsf{Ap}(\lambda x^{B}.c(x),b)=c(b)\in  \mathsf{prop_s} }}
 \end{array}
 $}
 \\
 \\

\noindent
And we add rules saying that a small proposition is a proposition and
 that a small proposition is  a set:
\\

\noindent
{\small

$\begin{array}{l}
    \mbox{\bf prop$_s$-into-prop)}\ \displaystyle{ \frac
       {\displaystyle \phi\ prop_s }
      { \displaystyle \phi\ prop}}
\end{array}\qquad
\qquad
\begin{array}{l}
    \mbox{\bf prop$_s$-into-set)}\
\displaystyle{ \frac
       {\displaystyle \phi\ prop_s }
      { \displaystyle \phi\ set}}
\end{array}\qquad
$}
\\
\\


\noindent
Equality rules include those saying that  type equality is an equivalence relation and  substitution of equal terms in a type:
\\

\noindent
{\small
$\begin{array}{l}
    \mbox{ref)}\,
\displaystyle{ \frac
       {\displaystyle A\ type }
      { \displaystyle A=A\ type}}
\end{array}
\qquad
\begin{array}{l}
    \mbox{sym)}\,
\displaystyle{ \frac
       {\displaystyle A=B\ type }
      { \displaystyle B=A\ type}}
\end{array}
\qquad
\begin{array}{l}
    \mbox{tra)}\,
\displaystyle{ \frac
       {\displaystyle A=B\ type \quad \ B=C\ type }
      { \displaystyle A=C\ type}}
\end{array}
\\[15pt]
\begin{array}{l}
    \mbox{subT)}\,\displaystyle{ \frac
       {\displaystyle\begin{array}{l}
C(x_1,\dots,x_n)\ type\ 
 [\, x_1\in A_1,\,  \dots,\,  x_n\in A_n(x_1,\dots,x_{n-1})\, ]   \\[5pt]
a_1=b_1\in A_1\ \dots \ a_n=b_n\in A_n(a_1,\dots,a_{n-1})\end{array}}
         {\displaystyle 
 C(a_1,\dots,a_{n})=C( b_1,\dots, b_n) \ type}}
   \end{array} 
$}
\\
\\

\noindent
where {\small $type \in \{ col, set,prop, prop_s\, \}$} with 
the same choice both in the premise and in the conclusion.
\\

\noindent
For terms into sets we add the following equality rules:\\
\\

\noindent
{\small
$\begin{array}{l}
    \mbox{ref)}\,
\displaystyle{ \frac
       {\displaystyle \ a\in A\ }
      { \displaystyle\ a=a\in A\ }}
\end{array}
\qquad
\begin{array}{l}
    \mbox{sym)}\,
\displaystyle{ \frac
       {\displaystyle\  a=b\in A\ }
      { \displaystyle\  b=a\in A\ }}
\end{array}
\qquad
\begin{array}{l}
    \mbox{tra)}\,
\displaystyle{ \frac
       {\displaystyle \ a=b\in A \qquad b=c\in A\ }
      { \displaystyle\  a=c\in A\ }}
\end{array}
$
\\
\\

\noindent
$\begin{array}{l}
      \mbox{sub)} \ \
\displaystyle{ \frac
         { \displaystyle 
\begin{array}{l}
 c(x_1,\dots, x_n)\in C(x_1,\dots,x_n)\ \
 [\, x_1\in A_1,\,  \dots,\,  x_n\in A_n(x_1,\dots,x_{n-1})\, ]   \\[5pt]
a_1=b_1\in A_1\ \dots \ a_n=b_n\in A_n(a_1,\dots,a_{n-1})
\end{array}}
         {\displaystyle c(a_1,\dots,a_n)=c(b_1,\dots, b_n)\in
 C(a_1,\dots,a_{n})  }}
      \\[15pt]
\mbox{conv)} \ \
\displaystyle{\frac{\displaystyle a\in A\ \qquad A=B\ type }
{\displaystyle a\in B}}
\qquad   
\mbox{conv-eq)} \ \
\displaystyle{\frac{\displaystyle a=b\in A\ \qquad A=B\ type }
{\displaystyle a=b\in B\ }}
\end{array}
$}
\\
\\

\noindent
Now the equality rules about collections are the following:
\\
\\

\noindent
{\small
$\begin{array}{l}
      \mbox{ \bf Strong Indexed Sum-eq} \\[10pt]
      \mbox{eq-}\Sigma ) \ \
\displaystyle{ \frac{\displaystyle   C(x)=D(x)
         \hspace{.1cm} col \ \ [x\in B]\qquad B=E\ col}
         {\displaystyle \Sigma_{x\in B} C(x)= \Sigma_{x\in E} D(x)
\hspace{.1cm} col }}\end{array}
\qquad
\begin{array}{l}
\mbox{\bf Dependent Product-eq}
\\[10pt]
\mbox{eq-$\Pi$}\ \
\displaystyle{\frac{ \displaystyle  C(x)=D(x)
        \hspace{.1cm} col\ [x\in B] \qquad B=E\ col }
{\displaystyle \Pi_{x\in B} C(x)=\Pi_{x\in E} D(x)\hspace{.1cm} col }}
      \end{array}
$}
\\
\\

\noindent
Then, the equality about sets are the following:
\\
\\

\noindent
{\small
$\begin{array}{l}
\mbox{\bf Lists-eq} \\[10pt]
      \mbox{eq-list)}\
\displaystyle{ \frac
         { \displaystyle C=D \hspace{.1cm} set}
         {\displaystyle List(C)=List(D) \hspace{.1cm} set }}
\end{array}
\qquad
\begin{array}{l}
      \mbox{ \bf Strong Indexed Sum-eq} \\[10pt]
      \mbox{eq-}\Sigma ) \ \
\displaystyle{ \frac{\displaystyle   C(x)=D(x)
         \hspace{.1cm} set \ \ [x\in B]\qquad B=E\ set}
         {\displaystyle \Sigma_{x\in B} C(x)= \Sigma_{x\in E} D(x)
\hspace{.1cm} set }}\end{array}
$
\\
\\    

\noindent
      $\begin{array}{l}
      \mbox{\bf Disjoint Sum-eq} \\[10pt]
      \mbox{eq-+} )\  \
\displaystyle{  \frac
         { \displaystyle B=E \hspace{.1cm} set \hspace{.3cm}C=D
         \hspace{.1cm} set}
         {\displaystyle B+ C =E+D\hspace{.1cm} set }}
\end{array}\qquad\begin{array}{l}
\mbox{\bf Dependent Product-eq}
\\[10pt]
\mbox{eq-$\Pi$}\ \
\displaystyle{\frac{ \displaystyle  C(x)=D(x)
        \hspace{.1cm} set\ [x\in B] \qquad B=E\ set }
{\displaystyle \Pi_{x\in B} C(x)=\Pi_{x\in E} D(x)\hspace{.1cm} set }}
      \end{array}
$}
\\
\\

\noindent
Then, \mtt\ includes the following equalities rules about propositions:
\\
\\

\noindent
{\small
      $\begin{array}{l}
\mbox{\bf Disjunction-eq}
      \\[10pt]
      \mbox{eq-}\vee )\ \
\displaystyle{ \frac
         { \displaystyle \psi=\alpha \hspace{.1cm} prop \hspace{.3cm}\phi=\beta
         \hspace{.1cm} prop}
         {\displaystyle \psi\vee \phi= \alpha\vee \beta \hspace{.1cm} prop }}
\end{array}
\qquad \begin{array}{l}
\mbox{\bf Implication-eq}
\\[10pt]
\mbox{eq-$\rightarrow$}\ \
\displaystyle{ \frac
       {\displaystyle \psi=\alpha
        \hspace{.1cm} prop \qquad \phi=\beta
        \hspace{.1cm} prop  }
{\psi \rightarrow \phi= \alpha\rightarrow \beta \hspace{.1cm} prop }}
\end{array}
$
\\
\\

\noindent
$
 \begin{array}{l}
\mbox{\bf Conjunction-eq}
\\[10pt]
      \mbox{eq-}\wedge) \ \
\displaystyle{ \frac
         { \displaystyle  \psi=\alpha   \hspace{.1cm} prop\qquad  \phi=\beta
         \hspace{.1cm} prop }
         {\displaystyle  \psi\wedge \phi=\alpha\wedge \beta \hspace{.1cm} prop }}
      \end{array}
\qquad
\begin{array}{l}
\mbox{\bf Propositional equality-eq}
      \\[15pt]
      \mbox{eq-Id)}\ \
\displaystyle{ \frac
{\displaystyle  A=E \hspace{.1cm} col \hspace{.3cm}  a=e\in A \hspace{.3cm} 
b=c\in A}
         {\displaystyle \orig{Id}(A, a, b)= \orig{Id}(E, e, c)\hspace{.1cm} prop }}
      \end{array}$
\\
\\

\noindent
      $\begin{array}{l}
      \mbox{\bf Existential quantification-eq}
      \\[10pt]
      \mbox{eq-}\exists ) \ \
\displaystyle{ \frac
         { \displaystyle   \alpha(x)=\beta(x)
         \hspace{.1cm} prop \ \ [x\in B]\qquad B=E\ prop}
         {\displaystyle  \exists_{x\in B} \alpha(x)=\exists_{x\in E} \beta(x)
\hspace{.1cm} prop }}
\end{array}     \qquad
\begin{array}{l}
\mbox{\bf Universal quantification-eq}
\\[10pt]
\mbox{eq-$\forall$}\ \
\displaystyle{ \frac
{\displaystyle \alpha(x)=\beta(x)\hspace{.1cm} prop\ [x\in B]\qquad B=E \hspace{.1cm} prop  }
{\displaystyle \forall_{x\in B} \alpha(x)=\forall_{x\in E} \beta(x)\hspace{.1cm} prop }}
\end{array}
$}
\\
\\

\noindent
The equality of propositions is that of collections, that of small propositions coincides with that
of 
$\mathsf{prop_s}$ and is that of  propositions
and that of sets:
\\

\noindent
{\small
$\begin{array}{l}
    \mbox{\bf prop-into-col eq}\ 
\displaystyle{ \frac
       {\displaystyle \phi=\psi\ prop }
      { \displaystyle \phi=\psi\ col}}\end{array}
\qquad
\begin{array}{l}
    \mbox{\bf prop$_{s}$ eq1} \
\displaystyle{ \frac
       {\displaystyle \phi=\psi\ prop_s }
      { \displaystyle \phi=\psi\in \mathsf{prop_s}}}\end{array}
\qquad 
\begin{array}{l}
\mbox{\bf prop$_{s}$ eq2}\
\displaystyle{ \frac
       {\displaystyle \phi=\psi\in \mathsf{prop_s} }
      { \displaystyle \phi=\psi\ prop_s }}\end{array}
\\[15pt]
\begin{array}{l}
 \mbox{\bf prop$_{s}$-into-prop eq}\
\displaystyle{ \frac
       {\displaystyle \phi=\psi\ prop_s }
      { \displaystyle \phi=\psi\ prop}}\end{array}
\qquad \begin{array}{l}
    \mbox{\bf prop$_s$-into-set eq}\
\displaystyle{ \frac
       {\displaystyle \phi=\psi\ prop_s }
      { \displaystyle \phi=\psi\ set}}
\end{array}
$}
\\
\\

\end{document}